\definecolor{darkred}{rgb}{0.5,0,0}
\definecolor{darkgreen}{rgb}{0,0.5,0}
\definecolor{darkblue}{rgb}{0,0,0.5}
\newtheorem{theorem}{Theorem}[subsection]
\newtheorem{corollary}[theorem]{Corollary}
\newtheorem{proposition}[theorem]{Proposition}
\newtheorem{lemma}[theorem]{Lemma}
\newtheorem{lem}[theorem]{}
\theoremstyle{definition}
\newtheorem{definition}[theorem]{Definition}
\theoremstyle{remark}
\newtheorem{remark}[theorem]{Remark}
\newtheorem{example}[theorem]{Example}
\newcommand{\blem}{\begin{lem} \rm}
\newcommand{\elem}{\end{lem}}
\newcommand\A{\mathcal{A}}
\newcommand{\N}{\mathbb{N}}
\newcommand{\R}{\mathbb{R}}
\newcommand{\C}{\mathbb{C}}
\newcommand{\cC}{\mathcal{C}}
\newcommand{\Z}{\mathbb{Z}}
\newcommand{\ddt}{\frac{\d}{\d t}}
\renewcommand{\P}{\mathbb{P}}
\newcommand\lie[1]{\mathfrak{#1}}
\newcommand{\h}{\lie{h}}
\newcommand{\g}{\lie{g}}
\newcommand{\z}{\lie{z}}
\renewcommand{\u}{\lie{u}}
\newcommand{\su}{\lie{su}}
\newcommand{\on}{\operatorname}
\newcommand{\ainfty}{{$A_\infty$\ }}
\newcommand{\mgraph}{\on{graph}}
\newcommand{\dual}{\vee}
\newcommand{\Obj}{\on{Obj}}
\newcommand{\Symp}{\on{Symp}}
\newcommand{\Lag}{\on{Lag}}
\newcommand{\Sp}{\on{Sp}}
\newcommand{\Aut}{ \on{Aut} }
\newcommand{\Ad}{ \on{Ad} }
\newcommand{\Hom}{ \on{Hom}}
\renewcommand{\ker}{ \on{ker}}
\newcommand{\im}{ \on{im}}
\newcommand{\ssm}{\kern-.5ex \smallsetminus \kern-.5ex}
\newcommand\dirac{/\kern-1.2ex\partial} 
\newcommand\qu{/\kern-.7ex/} 
\newcommand\lqu{\backslash \kern-.7ex \backslash} 
\newcommand\dr{r_+ \kern-.7ex - \kern-.7ex r_-}
\renewcommand{\d}{{\operatorname{d}}}
\newcommand{\ol}{\overline}
\newcommand\eps{\epsilon}
\newcommand\Om{\Omega}
\newcommand{\lan}{\langle}
\newcommand{\ran}{\rangle}
\newcommand{\ti}{\tilde}
\newcommand\pt{\on{pt}}
\renewcommand{\ss}{\on{ss}}
\newcommand\Tr{\on{Tr}}
\newcommand\rank{\on{rank}}
\newcommand\Vect{\on{Vect}}
\newcommand\ul{\underline}
\newcommand\G{\mathcal{G}}
\newcommand\bdefn{\begin{definition}}
\newcommand\edefn{\end{definition}}
\newcommand\bea{\begin{eqnarray*}}
\newcommand\eea{\end{eqnarray*}}
\newcommand\bcv{\left[ \begin{array}{r} }
\newcommand\ecv{\end{array} \right] }
\newcommand\bma{\left[ \begin{array} }
\newcommand\ema{\end{array} \right]}
\newcommand\ben{\begin{enumerate}}
\newcommand\een{\end{enumerate}}
\newcommand\beq{\begin{equation}}
\newcommand\eeq{\end{equation}}
\newcommand\bex{\begin{example}}
\newcommand\bsj{\left\{ \begin{array}{rrr} }
\newcommand\esj{\end{array} \right\}}
\newcommand\Id{\on{Id}}
\newcommand\eex{\end{example}}
\newcommand\Crit{{\on{Crit}}}
\newcommand\sx{*\kern-.5ex_X}
\def\mathunderaccent#1{\let\theaccent#1\mathpalette\putaccentunder}
\def\putaccentunder#1#2{\oalign{$#1#2$\crcr\hidewidth \vbox
to.2ex{\hbox{$#1\theaccent{}$}\vss}\hidewidth}}
\renewcommand\sharp{\setlength{\unitlength}{0.00010333in}
\begin{picture}(688,703)(0,-10)
\path(244,644)(244,44)
\path(444,644)(444,44)
\path(44,444)(644,444)
\path(644,244)(44,244)
\end{picture}
}
\newcommand{\CC}{\mathcal{C}}
\newcommand\GFuk{{\on{Fuk}^{\sharp}}}
\newcommand\concat{{\sharp}}
\newcommand\Fuk{{\on{Fuk}}}
\newcommand{\Cob}{\on{Bor}}
\newcommand{\graph}{\on{graph}}
\begin{document}

%
%
%
%
%

\title[Floer field theory]{Floer field theory \\ for coprime rank and degree}

\author{Katrin Wehrheim} 

\address{Department of Mathematics,
Massachusetts Institute of Technology,
Cambridge, MA 02139.
{\em E-mail address: katrin@math.mit.edu}}

\author{Chris Woodward}

\address{Department of Mathematics, 
Rutgers University,
Piscataway, NJ 08854.
{\em E-mail address: ctw@math.rutgers.edu}}

\thanks{Partially supported by NSF grants CAREER 0844188 and DMS
  0904358}

\begin{abstract}
We construct partial category-valued field theories in
$2+1$-dimensions using Lagrangian Floer theory in moduli spaces of
central-curvature unitary connections with fixed determinant of rank
$r$ and degree $d$ where $r,d$ are coprime positive integers.  These
theories associate to a closed, connected, oriented surface the
Fukaya category of the moduli space, and to a connected
bordism between two surfaces a functor between extended Fukaya
categories.  We obtain the latter by combining Cerf theory with
holomorphic quilt invariants.  These functors satisfy the natural
composition law.
\end{abstract}  

\maketitle

\tableofcontents

\section{Introduction} 
\label{field}

Floer's instanton homology \cite{fl:inst} associates to any homology
three-sphere a $\Z/8\Z$-graded group that is a version of the Morse
homology of the Chern-Simons functional on the space of
$SU(2)$-connections.  This homology forms a natural receptacle for a
{\em relative instanton} invariant of four-manifolds with boundary,
defined by counting solutions to the anti-self-dual Yang-Mills
equations on the four-manifold obtained by attaching a cylindrical
end.  These invariants satisfy a gluing law for four-manifolds with a
common boundary component. In particular, the Donaldson invariants of
a four-manifold that splits along a homology three-sphere can be
computed from the relative invariants of the two parts
\cite{don:floer}.  The guiding principle of topological field theory
now asks for an invariant of three-manifolds with boundary that
satisfies a gluing law such that composition gives rise to the
instanton Floer homology for closed three-manifolds.  A natural
algebraic framework for such an invariant is that of category-valued
field theories: functor-valued invariants of bordisms satisfying a
composition law.  The most well-known example of such a theory is
associated with the {\em Wess-Zumino-Witten} model of conformal field
theory; it associates to a two-dimensional bordism a functor between
categories of representations of affine Lie algebras.  In particular,
the functor associated to the pair of pants gives these categories the
structure of a tensor product \cite{hu:ver}.  This theory extends to a
$1+1+1$-dimensional theory which associates to any closed
three-manifold the Witten-Reshetikhin-Turaev invariant.  The field
theory associated to the anti-self-dual Yang-Mills equations is hoped
to be a $2+1+1$-dimensional theory, including a category-valued
$2+1$-dimensional theory.  A strategy towards defining such an
invariant was proposed by Donaldson and Fukaya \cite{fuk:bo} who,
inspired by the Atiyah-Floer conjecture, suggested that the category
associated to the $2$-manifold should be a category of Lagrangian
submanifolds of the moduli space of flat $SU(2)$-bundles.

These moduli spaces, however, are singular due to unavoidable
reducible connections on the necessarily trivial bundles.  In order to
obtain smooth moduli spaces of flat connections, one can consider
nontrivial $SO(3)$-bundles.  Counting anti-self-dual connections on
such bundles give rise to instanton Floer homology for closed
$3$-manifolds with $b_1>0$.  These were some of the motivating
examples for the definition of the Fukaya category.  Fukaya also
proposed in \cite{fuk:bo} that a three-manifold with boundary should
give rise to an object of the dual of the Fukaya category. Such an
instanton Floer homology with Lagrangian boundary conditions was also
proposed by Salamon in \cite{Sa} and is rigorously constructed in
Salamon-Wehrheim \cite{SW}.  More generally, if we fix coprime
positive integers $r,d$, then there is a unique isomorphism class of
$U(r)$-bundles of degree $d$ on each surface $X$.  The moduli space of
central-curvature connections with fixed determinant is a smooth
symplectic manifold $M(X)$, described in more detail in Section
\ref{svft}.  To (various types of) symplectic manifolds one can
associate a Fukaya category or its cohomological category, the
Donaldson category whose objects are Lagrangian submanifolds and
morphism spaces are Floer homology groups.  We use this as starting
point for the construction of a $2+1$-dimensional connected
category-valued field theory.  The version of the Fukaya category
needed is an {\em extended Fukaya category} $\Fuk^{\sharp}(M(X))$ of
$M(X)$ whose objects are sequences of correspondences 
\begin{equation} \label{firstchange} \ul{L} = (L_{1},\ldots, L_{m}),
  \quad L_{j} \subset N_{j-1}^-\times N_{j}, \quad
  j=1,\ldots,m \end{equation}
  from a point $N_0 = \on{pt}$ to $N_j = M(X)$ as in
  \cite{quiltfloer}.  The composition maps use counts of {\em quilted
    disks} in Ma'u-Wehrheim-Woodward \cite{Ainfty}.

  In this paper we construct a functor from the category of (compact,
  connected, connected oriented $2$-manifolds, $3$-bordisms) to the
  category of (\ainfty categories, homotopy classes classes of \ainfty
  functors).  The theory associates to each compact oriented
  $2$-manifold $X$ the extended Fukaya category $\Fuk^{\sharp}(M(X))$
  of $M(X)$.  Theorem \ref{fft} below proves the existence of functors
$$\Phi(Y): \Fuk^{\sharp}(M(X_-))\to  \Fuk^{\sharp}(M(X_+))$$
between these Fukaya categories given by counting holomorphic quilts
with boundary and seams in spaces of central-curvature connections
over $3$-bordisms $Y$ from $X_-$ to $X_+$.  These functors satisfy a
natural composition law whenever a bordism is formed by gluing
bordisms $Y,Y'$ along a common boundary:
$$\Phi(Y \circ Y') = \Phi(Y')\circ\Phi(Y) .$$
Rather than using anti-self-duality equations, we apply a dimensional
reduction.  Thus we wish to associate to a bordism $Y$ a Lagrangian
correspondence
$$L(Y)\subset M(X_-)^- \times M(X_+)$$
arising from the moduli space of central-curvature fixed-determinant
connections on a bundle over $Y$.  While such moduli spaces on an
arbitrary bordism are not smooth, the moduli spaces for elementary
bordisms (cylinders or handle attachments) are smooth.  Thus a Cerf
decomposition of the bordism into elementary pieces provides a
sequence of Lagrangian correspondences.  A categorification functor
constructed from pseudoholomorphic quilts in \cite{Ainfty} can be used
to convert each Lagrangian correspondence to a functor.  To show
independence of the composition of these functors from the choice of
decomposition of the bordism we prove that Cerf moves between the
decompositions are reflected by an equivalence (embedded geometric
composition) between the associated Lagrangian correspondences, up to
a possible grading shift.  
\label{gradshift1}
Finally, equivalent sequences of correspondences give rise to
isomorphic functors by the ``strip-shrinking'' analysis developed in
\cite{ww:isom}.  Because we need to restrict to situations in which
there are no reducible connections to obtain smooth symplectic
manifolds, not all the axioms of a topological field theory are
satisfied.  For instance, the surfaces must be connected, which
precludes the product axiom.  Moreover, the invariant for closed
three-manifolds arising from this Floer field theory is trivial, since
the moduli spaces associated to two-spheres are empty.  However, via a
connect sum construction we associate to a closed three-manifold a
relatively $\Z/4\Z$-graded quilted Floer homology group (Definition
\ref{torussummed} below) which, by an unproven version of the
Atiyah-Floer conjecture, should agree with a $U(r)$ instanton Floer
homology of the connect sum.  Since the preprint was first circulated
other approaches and extensions have appeared in Manolescu-Woodward
\cite{mw} and Cazassus \cite{caz:symp}, \cite{caz:nat} who use Floer
theory in an extended moduli space, and Horton \cite{horton}, who uses
traceless character varieties.  \label{secondchange}

The structure of the paper is the following.  Precise definitions of
the involved categories can be found in Section~\ref{cerfsec}, leading
to a rigorous formulation of this construction framework.  This
strategy has already been applied to obtain various other gauge
theoretic $2+1$ field theories. For example, a sequel to this paper
\cite{fieldb} uses similar $U(r)$ moduli spaces to develop invariants
for tangles that conjecturally correspond to Floer homology invariants
arising from singular instantons developed by Kronheimer-Mrowka
\cite{km:kh}.  A first application of the results of this paper to
symplectic mapping class groups of representation varieties was given
by Smith \cite{sm:qu}.   \label{thirdchange}

The present paper is an updated and more detailed version of a paper
the authors have circulated since 2007. The authors have unreconciled
differences over the exposition in the paper, and explain their points
of view at
\href{https://math.berkeley.edu/~katrin/wwpapers/}{https://math.berkeley.edu/$\sim$katrin/wwpapers/}
resp.
\href{http://christwoodwardmath.blogspot.com/}{http://christwoodwardmath.blogspot.com/}. The
publication in the current form is the result of a mediation.

\section{Field theories via connected Cerf theory}
\label{cerfsec}

In this section we review a version of the theory of Cerf describing
the decomposition of connected bordisms into elementary bordisms
between connected surfaces, and the ``Cerf moves'' between different
decompositions. Then we explain how to build a field theory by
assigning morphisms to elementary bordisms so that certain ``Cerf
relations" are satisfied.  We begin by fixing the notation for field
theories.

\subsection{Field theories}
\label{fieldtheories}

Our language for topological field theories for bordisms adapts that
in, for example, Lurie \cite{lurie:class}, rephrasing the earlier
definition of Atiyah.  Let $n$ be a non-negative integer and $X_\pm$
compact oriented $n$-manifolds.  Since the theory of connected
bordisms is trivial for $n = 0,1$, we take $n \ge 2$.

\begin{definition} {\rm (Connected bordism category)} 
\begin{enumerate} 
\item 
A {\em bordism} from $X_-$ to $X_+$ is a compact, oriented
  $n+1$-manifold $Y$ with boundary equipped with an
  orientation-preserving diffeomorphism 
$$\phi: \partial Y \to \ol{X}_-
  \sqcup X_+ .$$  
Here $\ol{X}_-$ denotes the manifold $X_-$ equipped with the opposite
orientation.
\item The {\em connected bordism category} $\Cob_{n+1}^0$ is the
  category whose
\begin{enumerate} 
\item objects are compact, connected, oriented $n$-dimensional smooth
  manifolds $X$;
\item morphisms from $X_-$ to $X_+$ are $n+1$-dimensional connected
  bordisms $(Y,\phi)$ from $X_-$ to $X_+$ modulo the equivalence given
  as follows: Set two bordisms $(Y,\phi),(Y',\phi')$ from $X_-$ to
  $X_+$ to be equivalent if there exists an orientation-preserving
  diffeomorphism $\psi$ that extends the given diffeomorphism on their
  boundaries
$$\psi: Y \to Y', \quad \phi' \circ \psi |_{\partial Y} = \phi:
  \partial Y \to \ol{X}_- \sqcup X_+ ;$$
\item composition of morphisms is given by gluing bordisms together:
  Given two bordisms $(Y_1,\phi_1)$ from $X_0$ to $X_1$ and
  $(Y_2,\phi_2)$ from $X_1$ to $X_2$, we may glue them together to a
  bordism $(Y_1,\phi_1) \cup_{X_1} (Y_2,\phi_2)$ from $X_0$ to
  $X_2$. For an explicit construction choose collar neighborhoods
$$\kappa_1: (-\eps,0] \times X_1 \to Y_1, \quad \kappa_2: [0,\eps)
      \times X_1 \to Y_2$$ 
and define
$$Y_1 \circ Y_2 := Y_1 \sqcup Y_2 \sqcup ((-\eps,\eps) \times X_1)/
\sim$$
where $\sim$ is the obvious equivalence given by $\kappa_1,\kappa_2$.
The resulting bordism $Y_1 \circ Y_2$ is well-defined up to
equivalence, since any two choices of collar neighborhoods are
isotopic by \cite[Thm.1.4]{milnor:hcobord}.  Given two morphisms
$[Y_1] = [(Y_1,\phi_1)]$ and $[Y_2] = [(Y_2,\phi_2)]$ we define $[Y_1]
\circ [Y_2] = [Y_1 \circ Y_2]$ with the boundary identification
induced from $\phi_1$ and $\phi_2$, independent of the choice of
collar neighborhood.
\item The identity morphism for a manifold $X$ is represented by the
  trivial bordism $[0,1] \times X$ with the obvious identifications on
  the boundary.
\end{enumerate}
\end{enumerate}
\end{definition}

\begin{definition} \label{diffeo comp} {\rm (Connected field theories)}  
Let $\cC$ be a category.  A {\em $\cC$-valued connected field theory}
in $n+1$ dimensions is a functor $\Phi$ from $\Cob_{n+1}^0$ to $ \cC$.
\end{definition} 

For example, a {\em connected \ainfty-category-valued field theory} is
a field theory taking values in the category of (\ainfty categories,
homotopy classes of \ainfty functors).  In Section~\ref{symp} below we
will construct a connected \ainfty-category-valued field theory by
composing a symplectic-valued field theory and the categorification
functor from \cite{Ainfty}, which associates \ainfty functors to
Lagrangian correspondences.

Field theories usually allow for disconnected manifolds and
bordisms. In this case one would take $\cC$ to be a symmetric monoidal
category and require the product axiom $\Phi(X_0 \sqcup X_1) =
\Phi(X_0) \otimes \Phi(X_1)$ for disjoint unions $X_0 \sqcup
X_1$. However, in our examples the Fukaya categories associated to
$2$-manifolds are well defined only in the connected case: Otherwise
the underlying symplectic space, a moduli space of bundles over a
disconnected surface, may be singular.  Hence we have restricted to
the connected bordism category.

\subsection{Cerf decompositions of bordisms}

In this section we describe the decomposition of bordisms into
elementary pieces in the connected bordism category.  In the
following, let $X_-, X_+$ be compact, connected, oriented manifolds of
dimension $n\geq 1$, and let $(Y, \phi: \partial Y \to \ol{X}_- \sqcup
X_+)$ be a compact, connected, oriented bordism from $X_-$ to $X_+$.

\begin{definition}   \label{Morse datum}
{\rm (Elementary and cylindrical bordisms)} 
\begin{enumerate} 
\item A {\em Morse datum} for $(Y,\phi)$ consists of a pair $(f,\ul{b})$ of a
  Morse function $f:Y\to\R$ and an ordered tuple 
\begin{equation} \label{ulb} 
\ul{b}= ( b_0 < b_1  < \ldots < b_{m} ) \in \R^{m+1} \end{equation}
such that
\begin{enumerate}
\renewcommand{\labelenumi}{(\roman{enumi})}
\item \label{firstcond} the minima and maxima of $f$ are 
$$\phi^{-1}(X_-)=f^{-1}(b_0), \quad \phi^{-1}(X_+)=f^{-1}(b_m) ;$$
\item \label{conn} each level set $f^{-1}(b)$ for $b\in\R$ is
  connected;
\item $f$ has distinct values at the (isolated) critical points,
  i.e.\ it induces a bijection $\Crit f\to f(\Crit f)$ between
  critical points and critical values;
\item \label{fourth}
$b_0,\ldots, b_{m} \in \R \setminus f(\Crit f)$ are regular values of
  $f$ such that each interval $(b_{i-1},b_i), i = 1,\ldots,m$ contains
  at most one critical value of $f$: 
$$ \# \Crit(f) \cap f^{-1}(b_{i-1},b_i)\leq 1 .$$
\end{enumerate}
A Morse function $f: Y \to \R$ is {\em adapted} to $Y$ if the first
condition above \eqref{firstcond} holds.
\item 
 \label{elementary}
A connected bordism $(Y,\phi)$ is an {\em elementary bordism} if $Y$
admits a Morse datum $(f,\ul{b}=(\min f,\max f))$, that is $f$ is a
Morse function with at most one critical point.  
\item $(Y,\phi)$ is a {\em cylindrical bordism} if $Y$ admits a Morse
  datum $(f,\ul{b}=(\min f,\max f))$, where $f$ is a Morse function
  with no critical point.
\item 
A morphism $[(Y,\phi)]$ of $\Cob^0_{n+1}$ is an {\em elementary
  resp.\ cylindrical morphism} if one (and hence all) of its
representatives is an elementary resp.\ cylindrical bordism.
\end{enumerate} 
\end{definition}

\begin{definition} \label{cerfdecomp} {\rm (Cerf decompositions)}  
\begin{enumerate}
\item 
A {\em Cerf decomposition} of the bordism $(Y,\phi)$ is a decomposition
$$
Y = Y_1 \cup_{X_1} Y_2 \cup_{X_2}  \ldots \cup_{X_{m-1}} Y_{m} 
$$ into a sequence $( Y_i \subset Y )_{i=1,\ldots m}$ of elementary
bordisms embedded in $Y$ that are disjoint from each other and
$\partial Y$ except for 
$$Y_1\cap \partial Y = \phi^{-1}(X_-) , \quad 
Y_m\cap \partial Y = \phi^{-1}(X_+), \quad X_i:=Y_{i}\cap Y_{i+1} $$ 
that are also connected submanifolds in $Y$ of codimension $1$.  As a
consequence we have
$$\partial Y_i \cong \ol{X}_{i-1}\sqcup X_i, \ \ i=1,\ldots,m, \quad
X_0=\phi^{-1}(X_-), \quad X_m=\phi^{-1}(X_+) .$$
\item
A {\em Cerf decomposition} of the morphism $[Y]$ in the connected
bordism category $\Cob_{n+1}$ is a sequence $ ([Y_i])_{i=1,\ldots m}$
of elementary morphisms that compose to
$$ [(Y,\phi)]= [(Y_1,\phi_1)] \circ [(Y_2,\phi_2)] \circ \ldots \circ
[(Y_{m},\phi_m)] .
$$
\item Two Cerf decompositions
$$ [(Y,\phi)] = [(Y_1,\phi_1)] \circ \ldots \circ
[(Y_m,\phi_m)] = [(Y_1,\phi_1)'] \circ \ldots \circ
[(Y_m,\phi_m)'] $$
are {\em equivalent} if there exist orientation-preserving
diffeomorphisms
$$ \delta_0 = \on{Id}_{X_0}, \quad \delta_1: X_1 \to X_1',\ldots,
\delta_{m-1}: X_{m-1} \to X_{m-1}', \quad \delta_m = \on{Id}_{X_m}
$$
such that for each $i = 1,\ldots, m$,
$$ [(Y_i,\phi_i)] = [(Y_i', ( \delta_{i-1} \sqcup \delta_i) \circ
  \phi_i )] .$$
\end{enumerate}
\end{definition}

\begin{remark} \label{lem cerf is morse} {\rm (Cerf decomposition via Morse datum)}  
Any Morse datum $(f,\ul{b})$ for the bordism $(Y,\phi)$ induces a Cerf
decomposition 
$$Y = Y_1 \cup_{X_1} \ldots \cup_{X_{m-1}} Y_{m}, \quad ( Y_i :=
f^{-1}( [b_{i-1},b_i] ) )_{i=1,\ldots m}$$
into elementary bordisms between the connected level sets $X_i =
f^{-1}(b_i)$.  Moreover, any Cerf decomposition of a representative
$Y\in[Y]$ induces a Cerf decomposition of the morphism $[Y]$.  On the
other hand, any Cerf decomposition of a bordism or morphism arises
from a Morse datum.
\end{remark}

\begin{remark} \label{handles} {\rm (Handle attachments)}  
The pieces $Y_i$ of a Cerf decomposition have simple topological
descriptions as follows: If the elementary bordism $Y_i$ contains no
critical point then it is in fact a cylindrical bordism.  In that
case $Y_i$ is diffeomorphic to the {\em cylinder} $X_i \times
[b_{i-1},b_i]$, and $X_{i-1}$ is diffeomorphic to $X_i$.  

Suppose $Y_i$ contains a single critical point with index
$k\in\{1,\ldots,n\}$.  In that case $Y_i$ is obtained (up to homotopy)
from the incoming manifold $X_{i-1}$ by attaching a handle $B^{k}
\times B^{n - k}$.  Here $B^k$ is a $k$-ball and the handle $B^k
\times B^{n-k}$ is attached via an {\em attaching map} $\alpha:
S^{k-1} \times B^{n - k} \to X_{i-1}$:
$$ Y_i \cong X_{i-1} \sqcup (B^k \times B^{n-k}) / ( x \sim \alpha(x),
x \in S^{k-1} \times B^{n-k}) .$$
The image of $S^{k-1} \times \{ 0 \}$ in $X_{i-1}$ is an {\em
  attaching cycle} and the image of $S^{k-1} \times B^{n-k}$ is viewed
as a thickening of the attaching cycle; we often omit the thickenings
from the description.

Conversely, $Y_i$ can be obtained from the outgoing manifold $X_i$ up
to homotopy by attaching a handle of opposite index to an attaching
cycle in $X_i$.  Concrete attaching cycles can be specified by
choosing a metric on $Y_i$.  Then the attaching cycles in $X_{i-1}$
resp.\ $X_i$ are given by the intersection with the stable resp.\
unstable manifold for the upward gradient flow of $f$ from the unique
critical point in $Y_i$, see Figure~\ref{stable}.  As explained in
Milnor \cite{milnor:hcobord}, the notion of attaching cycle can also
be defined without a metric via a {\em gradient-like vector field},
that is, a vector field $v \in \Vect(Y_i)$ \label{fourthchange} such that $D_v f > 0$
everywhere except at the critical points. This notion is sometimes
useful in order to, for example, show that any Cerf decomposition can
be re-arranged so that the indices of the critical points are in
increasing order, see \cite[Theorem 8.1]{milnor:hcobord}.
\begin{figure}[h]
\includegraphics[width=3in,height=1.5in]{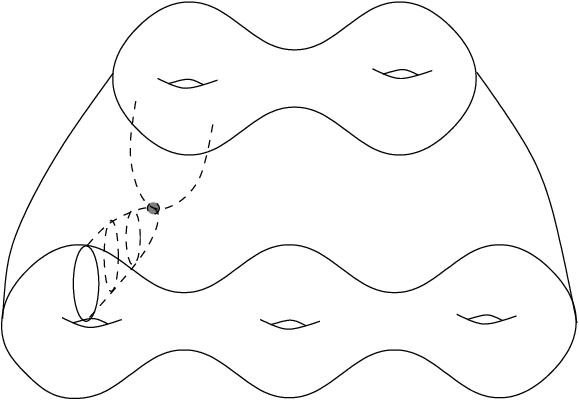}
\caption{Stable and unstable manifolds inside an elementary bordism} 
\label{stable}
\end{figure}
\end{remark}

\begin{lemma}   
\label{existence}  For $n \ge 2$ any bordism $Y$ as above admits a Morse datum.
\end{lemma}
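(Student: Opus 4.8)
The plan is to produce a Morse function $f\colon Y\to\R$ all of whose regular level sets are connected, and then to choose the separating regular values $\ul{b}$. One first invokes standard Morse theory on manifolds with boundary (Milnor \cite{milnor:hcobord}) to obtain a Morse function $f$ adapted to $Y$, so that $f\equiv\min f$ on $\phi^{-1}(X_-)$ and $f\equiv\max f$ on $\phi^{-1}(X_+)$ with all critical points in the interior; a small perturbation supported away from $\partial Y$ makes the critical values pairwise distinct. This already gives the boundary condition \eqref{firstcond} and the distinctness of critical values, so the real content is condition \eqref{conn}.

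Since $Y$ is connected and both boundary pieces $\phi^{-1}(X_\pm)$ are nonempty, one next uses Milnor's handle cancellation \cite{milnor:hcobord} to remove the extremal critical points. An index-$0$ critical point produces, just above its value, a small sphere component of the level set which --- $Y$ being connected --- must be attached to the rest of $Y$, and the first handle doing so is an index-$1$ handle against which it cancels; dually an index-$(n+1)$ critical point cancels against an index-$n$ handle. After finitely many such cancellations $f$ has no critical points of index $0$ or $n+1$, and then $\min f$ and $\max f$ are regular values, attained exactly on $\phi^{-1}(X_-)$ and $\phi^{-1}(X_+)$.

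Now apply Milnor's rearrangement theorem \cite{milnor:hcobord} to modify $f$, without altering its critical points or their indices, so that the critical values occur in order of increasing index; all indices now lie in $\{1,\dots,n\}$. Track $c(b):=\#\pi_0\big(f^{-1}(b)\big)$ as $b$ runs over regular values from $\min f$ to $\max f$; we have $c\equiv 1$ just above $\min f$ and just below $\max f$ since $X_\pm$ are connected. Crossing a critical value of index $k$ replaces the level set by a surgery --- remove $S^{k-1}\times D^{n+1-k}$ and glue in $D^{k}\times S^{n-k}$. For $2\le k\le n-1$ the removed set is a tubular neighbourhood of a connected submanifold of codimension $\ge 2$, so its removal, and the gluing of the connected piece $D^{k}\times S^{n-k}$, leave the level set connected and $c$ unchanged. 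For $k=1$, where the hypothesis $n\ge 2$ enters (removing two disjoint balls from a connected manifold of dimension $\ge 2$ does not disconnect it), $c$ can only decrease or stay the same. For $k=n$, $c$ can only increase or stay the same, a split occurring precisely when the attaching $(n-1)$-sphere is separating. By the index ordering the index-$1$ critical values come first, so while crossing them $c$ starts at $1$ and is non-increasing, hence $\equiv 1$; crossing indices $2,\dots,n-1$ then preserves $c=1$; and across the final block of index-$n$ critical values $c$ is non-decreasing yet must return to $1$, so it stays $1$ throughout. Thus every regular level set of $f$ is connected.

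It remains to choose $\ul{b}$: list the distinct critical values $c_1<\dots<c_N$ of $f$, pick regular values $b_0:=\min f<b_1<\dots<b_{N-1}<b_N:=\max f$ with $c_i\in(b_{i-1},b_i)$, and set $m:=N$; then $(f,\ul{b})$ is a Morse datum, with \eqref{fourth} holding since each interval $(b_{i-1},b_i)$ contains exactly one critical value. The main obstacle is the second step, elimination of the index-$0$ and index-$(n+1)$ critical points: here one must verify the geometric input of Milnor's cancellation theorem (an isolated transverse intersection of the pertinent attaching and belt spheres, arranged after handle slides), and connectedness of $Y$ and of both boundary components is essential. The subsequent bookkeeping of $c(b)$ is elementary but genuinely uses $n\ge 2$ --- for $n=1$ an index-$1$ surgery can disconnect a level set, and a $2$-dimensional connected bordism such as a genus-one surface with two boundary circles admits no adapted Morse function with connected level sets.
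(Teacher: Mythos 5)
Your proposal is correct and follows essentially the same route as the paper: Milnor's rearrangement/cancellation results give an adapted, index-ordered Morse function with distinct critical values and no critical points of index $0$ or $n+1$, and connectedness of the fibers then follows because index-$1$ handles can only merge components while index-$n$ handles can only split them, so the index ordering together with connectedness of $X_+$ forces every level set to be connected. Your monotone bookkeeping of $c(b)=\#\pi_0(f^{-1}(b))$ is just a slightly more systematic phrasing of the paper's own argument.
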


\begin{proof}  
By Milnor \cite[Theorem 8.1]{milnor:hcobord}, there exists an adapted
Morse function $f: Y \to \R$ such that $f$ is {\em self-indexing} in
the sense that the critical points of index $i$ have critical value
$i$, for each $i =0,\ldots, n+1$, and furthermore there are no
critical points of index $0$ or $n+1$.  After a small perturbation, we
may assume that the critical values of $f$ are distinct, by Milnor
\cite[Chapter 4]{milnor:hcobord}, but still with the property that 
the order on critical values is the same as that on index:
\begin{equation} \label{order} \forall y,y' \in \Crit(f), \ \ (i(y) < i(y')) \implies (f(y) <
f(y)') .\end{equation} 
This ordering property \eqref{order} implies that the fibers of $f$
are connected.  Indeed, each level set $f^{-1}(b)$ is obtained by
attaching handles to lower level sets $f^{-1}(b'), b' < b$; the level
sets $f^{-1}(b)$ become disconnected either because of a critical
point $y \in \Crit(f)$ of index $0$, which does not exist by
assumption, or by attaching a handle of index $n$ with disconnecting
attaching cycle.  Once a level set $f^{-1}(b)$ is disconnected, it can
be become connected again only by attaching a handle of index one,
with the points of the attaching cycle in different components of
$f^{-1}(b)$.  But since the Morse function $f$ is self-indexing and $n
\ge 2$, the $n$-handles are attached after the $1$-handles.  The
existence of a disconnecting $n$-handle would imply that $X_+$ is
disconnected, a contradiction.  Given such a Morse function $f$, let
$$b_0:=\min f, \quad b_m:=\max f, \quad m\geq\#\Crit f. $$
There evidently always exists a choice of $b_1<\ldots<b_{m-1}$
satisfying condition \eqref{fourth}, hence making $(f,\ul{b})$ a Morse
datum.
\end{proof} 

Note that our definition of a Cerf decomposition differs from the
standard handle decomposition in that we allow the elementary bordisms
$Y_i$ to be cylindrical bordisms and we do not keep track of the
attaching cycles.  This definition also simplifies the moves between
different decompositions: Since we do not fix a metric or require the
Smale condition of stable and unstable manifolds intersecting
transversally, we need not consider the handle slide move discussed by
Kirby \cite[p. 40]{kir:calc}.  On the other hand, we use much finer
decompositions than Heegaard splittings which are commonly used to
define topological invariants via Floer theory.  The latter are
specific to dimension $3$ and for bordisms defined as follows.

\begin{definition}   \label{compbody} {\rm (Heegaard splittings of bordisms)}  Let $Y$ be a compact
connected oriented bordism between compact connected oriented surfaces
$X_\pm$.
\begin{enumerate}
\item
$Y$ is a {\em compression body} if it admits a Morse function such that
  all critical points have the same index (namely $1$ or $2$):
$$ \exists f: Y \to \R \ \text{Morse}, \quad ( y_1,y_2 \in \Crit(f))
  \ \implies \ ( i(y_1) = i(y_2)) .$$
Equivalently, $Y$ is obtained from $\phi^{-1}(X_-)$ by adding only
handles of the same index, or from $\phi^{-1}(X_+)$ by adding only
handles of the opposite index.
\item
A {\em Heegaard splitting} of $Y$ is a decomposition $Y=Y_-\cup_X Y_+$
into compression bodies $Y_-,Y_+$ with a common boundary $X$, such
that $Y_-$ contains $\phi^{-1}(X_-)$ and $Y_+$ contains $\phi^{-1}(X_+)$, and both are
obtained from these boundary components by adding handles of index
$1$. We call $X$ the {\em Heegaard surface} of the splitting.
\end{enumerate}
\end{definition}  
 
\begin{remark}  {\rm (Heegaard splittings via Morse functions)} 
Let $f:Y\to\R$ be an adapted Morse function such that all critical
points of index $1$ have values less than those of the critical points
of index $2$.  Pick a value $c \in \R$ that separates the critical
values of index $1$ from those of index $2$:
$$ \forall y_1,y_2 \in \Crit(f), \quad (f(y_1) < f(y_2)) \ \iff \ (i(y_1) < i(y_2)) .$$ 
Then 
$$Y = Y_- \cup Y_+, \quad Y_- = f^{-1} (-\infty,c], \quad Y_+ = f^{-1}[c,\infty)$$ 
form a Heegaard splitting of $Y$.  Note that any such $f$ also
satisfies (ii) in Definition~\ref{Morse datum} automatically, and can
be perturbed to satisfy (iii).  The function $f$ can then be completed
to a Morse datum $(f,\ul{b})$ that induces a special Cerf
decomposition of $Y$, whose elementary bordisms are ordered by index.
\end{remark}  

\begin{remark} \label{eq is diffeq} Given two representatives $Y$ and
  $Y'$ of the same morphism in $\Cob^0_{n+1}$ and a diffeomorphism
  $\psi:Y\to Y'$, any Morse datum $(f,\ul{b})$ for $Y'$ induces a
  Morse datum $(\psi^*f,\ul{b})$ for $Y$ by pullback.  The Cerf
  decompositions of $Y$ induced by $(f,\ul{b})$ and $(\psi^*f,\ul{b})$
  are then equivalent via the collection
  $(\psi|_{Y_i})_{i=1,\ldots,m}$ of diffeomorphisms of the elementary
  bordisms.
\end{remark}

The existence of Morse data in Lemma~\ref{existence} implies that
every morphism in the bordism category has a Cerf decomposition.  The
subsequent Cerf Theorem~\ref{cerf} implies that Cerf decompositions
are unique up to the Cerf moves, which will be defined in the
following.  For simplicity of notation, we drop the boundary
identifications from the notation.

\begin{definition} {\rm (Cerf moves)}  Let $Y$ be a bordism and  $[Y]=[Y_1]\circ\ldots\circ[Y_m]$
a Cerf decomposition.  A {\em Cerf move} is one of
the following operations on $([Y_i])_{i=1,\ldots, m}$.
\begin{enumerate} 
\item 
A {\em critical point cancellation} is the move
$$
\;\qquad \text{from} \quad
[Y] = \ldots  [Y_j] \circ [Y_{j+1}]  \ldots 
\quad
\text{to}
\quad
[Y] = \ldots [ Y_j \cup Y_{j+1} ]  \ldots ,
$$ 
where for some $j\in\{1,\ldots,m-1\}$ the two consecutive elementary
bordisms $Y_j$, $Y_{j+1}$ compose to a cylindrical bordism $Y_j
\cup Y_{j+1} \subset Y$.  More precisely, in this situation, critical
point cancellation is the move from $([Y_i])_{i=1,\ldots,m}$ to
$([Y'_i])_{i=1,\ldots,m'}$ with $m'=m-1$ given by 
$$[Y'_i]=[Y_i], \ i<j, \quad  [Y'_j]=[Y_j\cup Y_{j+1}], \quad [Y'_i]=[Y_{i+1}], \ i>j .$$  
A {\em critical point creation} is the same move with the roles of
$([Y_i])_{i=1,\ldots,m}$ and $([Y'_i])_{i=1,\ldots,m'}$ interchanged.
\item 
A {\em critical point switch} is the move
$$
\;\qquad\text{from} \quad
[Y] = \ldots [Y_j] \circ [Y_{j+1}]  \ldots 
\quad
\text{to}
\quad
[Y]= \ldots   [Y'_j] \circ [Y'_{j+1}]  \ldots ,
$$ 
where for some $j\in\{1,\ldots,m-1\}$ the composition $[Y_j] \circ
[Y_{j+1}]$ equals $[Y'_j] \circ [Y'_{j+1}]$, and the two Cerf
decompositions $[Y_j] \circ [ Y_{j+1}] = [ Y'_j ] \circ [ Y'_{j+1}]$
of the same morphism are given by Morse data $(f, \ul{b})$ and $(f',
\ul{b}')$ on representatives with unique critical points $y_{j(+1)}\in Y_{j(+1)}$ and
$y'_{j(+1)}\in Y'_{j(+1)}$ in each part, whose attaching cycles (for
some choice of a metric) switch in the following sense: The attaching
cycles of $y_j$ and $y_{j+1}$ in $X_j$ and those of $y'_j$ and
$y'_{j+1}$ in $X'_j$ are disjoint, while in $X_{j-1}=X'_{j-1}$ the
attaching cycle of $y_j$ is homotopic to that of $y'_{j+1}$, and the
attaching cycle of $y_{j+1}$ is homotopic to that of $y'_{j}$; and
analogously for the intersections of stable manifolds with
$X_{j+1}=X'_{j+1}$.  More precisely, in this situation, critical point
switch is the move from $([Y_i])_{i=1,\ldots,m}$ to
$([Y'_i])_{i=1,\ldots,m'}$ with $m'=m$, $[Y'_i]=[Y_i]$ for $i<j$ and $i>j+1$, and
$[Y'_j]\circ [Y'_{j+1}]=[Y_j]\circ [Y_{j+1}]$ as above.
\item
A {\em cylinder cancellation} is the move
$$
\;\qquad \text{from} \quad
[Y] = \ldots [Y_j] \circ [Y_{j+1}] \ldots 
\quad
\text{to}
\quad
[Y] = \ldots [ Y_j\cup Y_{j+1} ]  \ldots ,
$$ where for some $j\in\{1,\ldots,m-1\}$ one of the two consecutive
elementary morphisms $[Y_j]$, $[Y_{j+1}]$ is cylindrical.  Then the composition
$[Y_j] \circ [Y_{j+1}]$ is an elementary morphism as well.  More
precisely, in this situation, critical point cancellation is the move
from $([Y_i])_{i=1,\ldots,m}$ to $([Y'_i])_{i=1,\ldots,m'}$ with $m'=m-1$,
$$[Y'_i]=[Y_i], \ i<j, \quad [Y'_j]=[Y_j] \circ [Y_{j+1}], \quad [Y'_i]=[Y_{i+1}]$$ 
for $i>j$.  A {\em cylinder creation} is the same move with the roles
of $([Y_i])_{i=1,\ldots,m}$ and $([Y'_i])_{i=1,\ldots,m'}$
interchanged.
\end{enumerate}
\end{definition}

\begin{remark}  {\rm (Stabilizations of Heegaard splittings versus Cerf moves)} 
In dimension $n=2$ we can compare critical point creation to the
stabilization of Heegaard splittings.  A {\em stabilization} of a
Heegaard splitting is obtained by connect sum with the standard
Heegaard splitting of a sphere $S^3 = H_- \cup H_+$ into solid tori
$H_-,H_+$.  More precisely, given a Heegaard splitting $Y = Y_- \cup_X
Y_+$, its stabilization is obtained by pulling the Heegaard splitting
$Y {\sharp} S^3 = Y_-' \cup Y_+'$ into compression bodies $Y_\pm' =
Y_\pm {\sharp} H_\pm$ back to $Y\cong Y {\sharp} S^3$.  Equivalently,
let $[-1,1 ] \times X = Y_-'' \cup Y_+''$ be the decomposition of the
cylindrical bordism consisting of two elementary bordisms, each
carrying a Morse function with index $1$ resp. $2$.  Then $Y_\pm'$ is
obtained from $Y_\pm$ by attaching $Y_\pm''$ at $X$, since attaching a
one-handle is equivalent to connected sum with a torus.  Thus if the
Heegaard splitting $Y = Y_- \cup Y_+$ is induced by a Cerf
decomposition then the stabilization is obtained from a critical point
creation.  Conversely any critical point creation can be viewed as a
connected sum as above, and so induces a stabilization of the
corresponding Heegaard splitting.
\end{remark} 
    
\begin{figure}[h]
\setlength{\unitlength}{0.00037489in}
\begingroup\makeatletter\ifx\SetFigFont\undefined
\def\x#1#2#3#4#5#6#7\relax{\def\x{#1#2#3#4#5#6}}%
\expandafter\x\fmtname xxxxxx\relax \def\y{splain}%
\ifx\x\y   
\gdef\SetFigFont#1#2#3{%
  \ifnum #1<17\tiny\else \ifnum #1<20\small\else
  \ifnum #1<24\normalsize\else \ifnum #1<29\large\else
  \ifnum #1<34\Large\else \ifnum #1<41\LARGE\else
     \huge\fi\fi\fi\fi\fi\fi
  \csname #3\endcsname}%
\else
\gdef\SetFigFont#1#2#3{\begingroup
  \count@#1\relax \ifnum 25<\count@\count@25\fi
  \def\x{\endgroup\@setsize\SetFigFont{#2pt}}%
  \expandafter\x
    \csname \romannumeral\the\count@ pt\expandafter\endcsname
    \csname @\romannumeral\the\count@ pt\endcsname
  \csname #3\endcsname}%
\fi
\fi\endgroup
{\renewcommand{\dashlinestretch}{30}
\begin{picture}(6774,2829)(0,-10)
\path(4512,2667)(4512,2665)(4513,2659)
	(4514,2650)(4515,2635)(4517,2614)
	(4520,2588)(4524,2556)(4529,2520)
	(4536,2480)(4543,2438)(4551,2395)
	(4560,2351)(4570,2307)(4581,2265)
	(4593,2225)(4607,2186)(4621,2150)
	(4636,2116)(4653,2084)(4671,2054)
	(4691,2026)(4712,2000)(4735,1975)
	(4761,1952)(4788,1929)(4818,1908)
	(4850,1887)(4875,1872)(4901,1857)
	(4929,1842)(4957,1828)(4988,1813)
	(5019,1799)(5052,1784)(5086,1769)
	(5122,1754)(5158,1739)(5196,1724)
	(5235,1709)(5274,1694)(5315,1678)
	(5356,1662)(5397,1646)(5440,1630)
	(5482,1614)(5525,1597)(5567,1580)
	(5609,1563)(5652,1546)(5693,1529)
	(5734,1511)(5775,1493)(5814,1475)
	(5853,1457)(5891,1439)(5927,1421)
	(5963,1402)(5997,1383)(6030,1364)
	(6061,1344)(6092,1325)(6120,1305)
	(6148,1284)(6174,1263)(6200,1242)
	(6226,1217)(6251,1192)(6274,1165)
	(6296,1137)(6317,1108)(6336,1077)
	(6353,1044)(6370,1010)(6385,973)
	(6400,934)(6414,892)(6426,848)
	(6438,802)(6449,753)(6460,701)
	(6470,647)(6479,592)(6487,534)
	(6495,477)(6502,419)(6509,361)
	(6514,306)(6519,253)(6524,205)
	(6527,161)(6530,122)(6532,89)
	(6534,63)(6535,43)(6536,28)
	(6537,19)(6537,14)(6537,12)
\dashline{60.000}(4062,2307)(6762,2307)
\dashline{60.000}(4062,732)(6762,732)
\path(3162,1632)(3612,1632)
\path(3492.000,1602.000)(3612.000,1632.000)(3492.000,1662.000)
\path(462,2802)(462,2800)(462,2795)
	(462,2786)(462,2771)(463,2751)
	(463,2725)(464,2692)(464,2653)
	(465,2609)(466,2560)(467,2507)
	(468,2451)(470,2394)(471,2336)
	(473,2277)(475,2220)(477,2164)
	(479,2110)(481,2058)(484,2008)
	(487,1961)(489,1916)(492,1874)
	(496,1834)(499,1797)(503,1761)
	(507,1728)(511,1696)(516,1666)
	(520,1637)(526,1610)(531,1583)
	(537,1557)(545,1524)(554,1492)
	(564,1462)(574,1432)(585,1402)
	(597,1374)(609,1346)(622,1320)
	(636,1294)(651,1269)(666,1246)
	(681,1224)(697,1203)(714,1184)
	(730,1166)(747,1150)(764,1136)
	(781,1123)(798,1112)(815,1102)
	(831,1094)(848,1088)(864,1083)
	(880,1080)(896,1078)(912,1077)
	(929,1078)(947,1080)(964,1084)
	(982,1089)(1000,1096)(1018,1104)
	(1036,1114)(1054,1125)(1073,1137)
	(1091,1151)(1109,1166)(1127,1181)
	(1144,1198)(1161,1215)(1177,1233)
	(1193,1252)(1207,1270)(1221,1289)
	(1234,1308)(1246,1326)(1258,1345)
	(1268,1363)(1278,1381)(1287,1399)
	(1297,1421)(1306,1443)(1314,1465)
	(1322,1488)(1329,1510)(1336,1534)
	(1343,1558)(1349,1582)(1356,1606)
	(1362,1631)(1368,1655)(1375,1680)
	(1381,1704)(1388,1727)(1395,1750)
	(1402,1772)(1410,1794)(1418,1815)
	(1427,1836)(1437,1857)(1447,1876)
	(1458,1895)(1469,1914)(1482,1933)
	(1496,1952)(1510,1971)(1526,1990)
	(1543,2009)(1560,2027)(1578,2044)
	(1597,2061)(1617,2077)(1637,2092)
	(1656,2105)(1676,2117)(1696,2128)
	(1716,2137)(1736,2144)(1755,2150)
	(1774,2154)(1793,2156)(1812,2157)
	(1827,2156)(1842,2155)(1856,2152)
	(1872,2148)(1887,2143)(1902,2137)
	(1918,2129)(1933,2119)(1949,2109)
	(1965,2096)(1980,2083)(1996,2067)
	(2011,2050)(2027,2032)(2042,2012)
	(2056,1991)(2070,1969)(2084,1945)
	(2097,1920)(2109,1894)(2121,1866)
	(2132,1838)(2143,1809)(2153,1779)
	(2162,1747)(2171,1715)(2179,1682)
	(2187,1647)(2192,1621)(2197,1593)
	(2202,1565)(2206,1535)(2211,1504)
	(2215,1472)(2218,1438)(2222,1402)
	(2225,1364)(2228,1324)(2231,1282)
	(2234,1237)(2236,1190)(2239,1140)
	(2241,1088)(2243,1033)(2245,976)
	(2247,917)(2249,856)(2251,794)
	(2252,730)(2254,667)(2255,604)
	(2256,542)(2257,483)(2258,426)
	(2259,374)(2260,326)(2260,284)
	(2261,247)(2261,217)(2262,193)
	(2262,174)(2262,162)(2262,153)
	(2262,149)(2262,147)
\dashline{60.000}(12,2307)(2712,2307)
\dashline{60.000}(12,1632)(2667,1632)
\dashline{60.000}(12,732)(2712,732)
\end{picture}
}
\caption{Critical point cancellation}
\label{minmax1}
\end{figure}

\begin{theorem}[Connected Cerf theory] \label{cerf} 
Let $Y$ be a bordism as before of dimension $n + 1 \ge 3$, and fix a
Cerf decomposition $[Y]=[Y_1]\circ\ldots\circ[Y_m]$. Then any other
Cerf decomposition of $[Y]$ can be obtained up to equivalence from
$([Y_i])_{i=1,\ldots,m}$ by a finite sequence of Cerf moves.
\end{theorem}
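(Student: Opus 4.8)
The plan is to reduce the statement to a claim about paths of Morse functions on one fixed representative, and then to prove a connected, boundary-relative version of Cerf's theorem for such paths. First I would invoke Remark~\ref{lem cerf is morse}: every Cerf decomposition of $[Y]$ is induced by a Morse datum $(f,\ul b)$ on some representative, and conversely; and Remark~\ref{eq is diffeq}: pulling a Morse datum back along an orientation-preserving diffeomorphism of representatives yields an equivalent Cerf decomposition. So fix one representative $Y$ and pull every competing Cerf decomposition back to it. Next I would absorb the tuple $\ul b$: for fixed $f$, sliding a $b_i$ within a connected component of $\R \setminus f(\Crit f)$ leaves the induced decomposition unchanged up to equivalence, while inserting or deleting a $b_i$ changes it by a cylinder creation resp.\ cancellation (the extra piece $f^{-1}[b_{i-1},b_i]$ then contains no critical point). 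Hence it suffices to compare \emph{minimal} Morse data --- $m=\#\Crit f$, exactly one critical value in each $(b_{i-1},b_i)$ --- for which the Cerf decomposition is determined by $f$ alone up to equivalence. I would also normalize $f^0$ and $f^1$ to agree with a fixed critical-point-free collar model near $\partial Y$; note that the connectedness-of-level-sets requirement already forces such an $f$ to have no critical point of index $0$ or $n+1$, exactly as in the proof of Lemma~\ref{existence} (an interior minimum or maximum would produce a disconnected level set).

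The core step is to join $f^0$ and $f^1$ by a path $(f^t)_{t\in[0,1]}$ in the convex --- hence connected --- space of functions agreeing with the collar model near $\partial Y$, and to perturb the path rel endpoints so that it is \emph{Cerf-generic}: for all but finitely many $t$ the function $f^t$ is Morse with pairwise distinct critical values, and at each exceptional time $f^t$ either has a single birth--death ($A_2$) degenerate critical point, all others nondegenerate with distinct values, or is Morse with exactly two critical points at one common value. Then I would pick minimal tuples $\ul b^t$ depending piecewise-constantly on $t$, literally constant on each maximal interval on which $f^t$ stays Morse with a fixed ordering of its critical values; this produces a path of Cerf decompositions of $[Y]$ that changes only across the finitely many exceptional times.

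Reading off the moves is then essentially formal. On an interval of times with $f^t$ Morse of fixed critical-value order, integrating a $t$-dependent gradient-like vector field vanishing near $\partial Y$ gives an ambient isotopy identifying the two Cerf decompositions, so they are equivalent in the sense of Definition~\ref{cerfdecomp}. Across a birth--death time, the local $A_2$ model shows that on the side with the extra pair of consecutive-index critical points the two relevant consecutive elementary bordisms compose to a bordism admitting a Morse function with no critical point, i.e.\ a cylindrical bordism --- so the change is a critical point creation (resp.\ cancellation), up to a cylinder move to readjust $\ul b$. Across an equal-values time, the local model shows the two consecutive elementary pieces are replaced by two others with the same composite morphism and with attaching cycles behaving precisely as in the definition of the critical point switch. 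Concatenating these finitely many moves and equivalences, followed by cylinder moves and a last equivalence passing from the minimal $\ul b^1$ to the originally prescribed tuple, expresses the target Cerf decomposition in terms of the source by Cerf moves.

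The step I expect to be the main obstacle is the one concealed in the second paragraph: the generic path $(f^t)$ must in addition be chosen so that \emph{every level set $(f^t)^{-1}(b)$ is connected and $f^t$ has no critical point of index $0$ or $n+1$, for all $t$} --- otherwise the intermediate data are not legitimate Cerf decompositions in the connected bordism category and the moves above are meaningless. Excluding index-$0$ and index-$(n+1)$ critical points along the path uses $n\ge 2$: a birth of a $(0,1)$- or $(n,n+1)$-cancelling pair can be pushed into the boundary collar and absorbed, or traded for a birth within the admissible index range. Connectedness of level sets is then forced by the mechanism of Lemma~\ref{existence}: if the path is compatible with the index ordering between exceptional times, a disconnecting index-$n$ handle would make $X_+$ disconnected, a contradiction, while the exceptional-time local models alter level sets only inside a ball. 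The genuine work is to carry out this index-reordering of the whole path while keeping it Cerf-generic, so that afterwards it still meets only the two codimension-one degeneracy strata; I would do this by a one-parameter, pseudo-isotopy-flavored version of the handle-rearrangement arguments of Milnor~\cite{milnor:hcobord} combined with Cerf's stratification of the space of smooth functions.
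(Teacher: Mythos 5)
Your skeleton is the same as the paper's: reduce to a generic one--parameter family $(f^t)$ of functions on a fixed representative, use the local models at the finitely many exceptional times (birth--death cusps, critical--value crossings) to produce critical point creations/cancellations and switches, use a $t$--dependent gradient--like vector field to get diffeomorphism equivalences on the intervals in between, and absorb changes of the cut tuple $\ul{b}$ into cylinder creations/cancellations. You have also correctly isolated the one step on which everything hinges, namely that the homotopy can be chosen so that \emph{every} $f^t$ has connected fibers (and no extrema); the paper does not prove this either, but outsources it to Gay--Kirby \cite[Theorem 2]{gk:indef} on fiber--connected Morse $2$--functions.

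The gap in your proposal is that your substitute for the Gay--Kirby input is only a sketch, and the particular mechanism you lean on does not work as stated. You propose to force connectedness ``between exceptional times'' by keeping the path compatible with the index ordering, as in Lemma~\ref{existence}. But the endpoints $f^0,f^1$ are the Morse data of two \emph{arbitrary} Cerf decompositions, which need not be ordered, and any path between two functions whose critical values sit in different index orders must pass through crossings of critical values of \emph{different} indices; so ``index order $=$ value order'' cannot be maintained along the whole path, and the argument of Lemma~\ref{existence} is not available at and near those times. Likewise, ``pushing a $(0,1)$-- or $(n,n+1)$--birth into the boundary collar'' and the ``one--parameter version of Milnor's rearrangement'' are exactly the delicate parametrized modifications whose existence is the content of the cited theorem (disconnection of a fiber is a global phenomenon, not confined to the local model of the singularity, so it cannot simply be excised in a ball). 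If you want a self--contained proof you must either reprove that result or follow one of the alternative routes the paper mentions (Juh\'asz's combinatorial modification of the sequence of moves, or reduction to Reidemeister--Singer via Motto); otherwise you should, like the paper, cite it explicitly rather than leave it as ``the genuine work.''
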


\begin{proof}[Remarks on the Proof of Theorem~\ref{cerf}:]
The statement without the connectedness conditions can be proved using
theorems of Thom and Mather \cite{ma:st5}, see also \cite{mar:sing}.
A generic homotopy $\ti{f}: Y \times [0,1] \to \R$ between two Morse
functions $f_0,f_1$ with distinct critical values has only a finite
number of cusp singularities, corresponding to the critical point
cancellations and creations, and a finite number of times $s \in
[0,1]$ such that the critical values of $\ti{f}_s := \ti{f}( \cdot,
s)$ are not distinct, and at such times two critical values cross.  A
homotopy between Morse functions with connected fibers does not
necessarily have connected fibers.  However, Gay and Kirby
\cite[Theorem 2]{gk:indef} show that there exists a generic homotopy
between any two Morse functions with connected fibers.  Let
$c_1,\ldots,c_m \in (0,1)$ be the times for which either the critical
values of $\ti{f}_s$ coincide or $\ti{f}_s$ is not Morse.

Away from the critical values the Cerf decompositions are equivalent
by diffeomorphisms.  Indeed, choose $\eps$ small and smoothly varying
$b_1(s),\ldots, b_{m-1}(s)$ separating the critical values of
$\ti{f}_s $ for $s \in [c_i + \eps, c_{i+1}-\eps]$.  The inverse
images of the level sets $\ti{f}_s^{-1}(b_i(s))$ flow out smooth
submanifolds of $Y \times [c_i + \eps,
c_{i+1}-\eps]$
\label{fifthchange} denoted $\ti{f}^{-1}(b_i)$, by the implicit
function theorem.  Choose a vector field
$v \in \Vect( Y \times [c_i + \eps, c_{i+1}-\eps])$ tangent to the
level sets $\ti{f}^{-1}(b_i)$ and satisfying
$(D_{y,s}\pi_2)_* v =\partial_s$ for any
$(y,s)\in Y \times [c_i + \eps, c_{i+1}-\eps]$, where $\pi_2$ is
projection onto the second factor.  Such a vector field $v_0$ exists
on each level set $\ti{f}^{-1}(b_i)$ since the $b_i(s)$ are regular
values:
\begin{equation} \label{Tys} 
T_{y,s} \ti{f}^{-1}(b_i) \cap (T_y Y \times \{ 0 \}) = T_y
f^{-1}(b_i(s)), \quad  D\pi_2 ( T_{y,s} \ti{f}^{-1}(b_i) ) = \R
.\end{equation} 
Next $v_0$ extends to a neighborhood of each level set
$\ti{f}^{-1}(b_i)$ by the tubular neighborhood theorem.  One may then
extend $v_0$ to a vector field on $Y \times [c_i + \eps,
  c_{i+1}-\eps])$ using interpolation with the vector field
$\partial_s \in \Vect( Y \times [c_i + \eps, c_{i+1}-\eps])$. That is,
let $\rho \in C^\infty(Y \times [c_i + \eps, c_{i+1} -
\eps])$ \label{change6} be a
bump function equal to one on a neighborhood of each
$\ti{f}^{-1}(b_i)$ and vanishing outside of a small neighborhood of
the union of $\ti{f}^{-1}(b_i)$.  Define $ v = \rho v_0 + (1 - \rho)
\partial_s $.  The flow $\psi_s$ of $v$ preserves the level sets of
$b_0,\ldots, b_m$ and so defines diffeomorphisms of the pieces of the
Cerf decomposition of $Y$ for $\ti{f}_s$:
$$ \psi_{s_2 - s_1} ( \ti{f}^{-1}_{s_1}(b_i(s_1), b_{i+1}(s_1)) ) = 
  \ti{f}^{-1}_{s_2}(b_i(s_2), b_{i+1}(s_2))   .$$
Hence the functions $\ti{f}_s$ and values $b_i(s)$ for $s \in [c_i +
  \eps, c_{i+1} - \eps]$ define an equivalent Cerf decomposition of
$[Y]$.

On the other hand, the Cerf decompositions for $c_i - \eps, c_i +
\eps$ are equal for all but one or two pieces by the same argument in
the previous paragraph.  For those pieces, one either has a critical
point switch move or critical point cancellation by the local model
for the cusp singularities \cite[p.157]{ma:st5}.
 \end{proof} 

\begin{remark} {\rm (Alternative approaches in dimension three)}  
Alternatively in dimension three, one may show that given a sequence
of Cerf moves with possibly disconnected fibers, one may modify the
sequence so that one obtains a sequence of Cerf moves preserving
connectedness, but so that the sequence is not necessarily associated
to a homotopy.  This is the approach taken in Juhasz
\cite[p. 1434-1437]{juhasz:hol}\footnote{We thank T. Perutz for
  pointing out this reference to us.}.  Finally, one may reduce the
theorem to a relative case of the Reidemeister-Singer theorem (that
any two Heegaard splittings are related by a sequence of
stabilizations and de-stabilizations).  By Motto's thesis,
\cite[Chapter 4]{motto:thesis} any two Heegaard splittings of a
connected bordism $Y$ are related by a sequence of stabilizations and
de-stabilizations.  In order to deduce Theorem \ref{cerf}, it suffices
to modify the given Cerf decomposition to one corresponding to a
Heegaard splitting by a sequence of critical point switches.  That
this is possible follows from Milnor \cite[Theorem 4.4,4.1,4.2
  Extension]{milnor:hcobord}.
\end{remark}

The Cerf theorem above implies that for any category $\cC$, in order
to construct a connected $\cC$-valued field theory in the sense of
Definition~\ref{diffeo comp}, it suffices to construct the functors on
elementary bordisms and check that the Cerf moves correspond to
composition identities in $\CC$.  For objects $M_1,M_2,M_3$ of $\cC$
we denote by $\circ: \Hom(M_1,M_2) \times \Hom(M_2,M_3) \to
\Hom(M_1,M_3)$ the composition map.  For any object $M$ of $\cC$
denote by $1_M \in \Hom(M,M)$ the identity.

\begin{theorem} \label{cerfthm} {\rm (Field theories via morphisms for
    elementary bordisms)} Suppose we are given for some $n \ge
  2$
  \label{change7} a partial functor $\Phi$ from $\Cob^0_{n+1}$ to
  $\CC$ that associates
\begin{enumerate}
\item 
to each compact oriented $n$-manifold $X$, an object 
$\Phi(X) \in \Obj(\cC)$,
\item 
to each elementary morphism $[Y]$ from $X_-$ to $X_+$, a morphism
$\Phi([Y])$ from $\Phi(X_-)$ to $\Phi(X_+)$,
\item 
to the trivial morphism $[[0,1]\times X]$ the identity morphism
$1_{\Phi(X)}$ of $\Phi(X)$;
\end{enumerate} 
and satisfy the following {\em Cerf relations} for any pair of
elementary morphisms $[Y_{1}]$ from $X_0$ to $X_1$ and $[Y_{2}]$ from
$X_1$ to $X_2$ :
\begin{enumerate}
\item If $[Y_{1}] \circ [Y_{2}]$ is a cylindrical morphism, then
$$\Phi([Y_{1}]) \circ \Phi([Y_{2}]) = \Phi([Y_{1}] \circ [Y_{2}]) .$$
\item 
If $[Y_{1}],[Y_{2}]$ are related by critical point switch to two other
elementary morphisms $[Y_{1}'],[Y_{2}']$ from $X_0$ to $X'_1$ and from
$X'_1$ to $X_2$, then
$$\Phi([Y_{1}]) \circ \Phi([Y_{2}]) = \Phi([Y_{1}']) \circ \Phi([Y_{2}']) .$$
\item
If one of $[Y_{1}],[Y_{2}]$ is cylindrical, then
$$\Phi([Y_{1}]) \circ \Phi([Y_{2}]) = \Phi([Y_{1}]\circ[Y_{2}]).
$$
\end{enumerate} 
Then there exists a unique extension of $\Phi$ to a $n+1$-dimensional
connected $\CC$-valued field theory $\ol{\Phi}: \Cob_{n+1}^0 \to \CC$.
\end{theorem} 

\section{Central curvature connections}
\label{svft}

In this section we show that assigning to each closed manifold
resp.\ elementary bordism a moduli space of connections, considered as
a symplectic manifold resp.\ Lagrangian correspondence, gives rise to
a symplectic-valued field theory. The general idea is well-known to
experts, especially in the context of quantum Chern-Simons theory
where according to Witten's suggestion the invariants of
three-dimensional bordisms arise by ``quantizing'' these Lagrangian
correspondences \cite{wi:jo}.

We begin by reviewing the construction of a symplectic category in
Section~\ref{symp}.  Section~\ref{central} provides general background
on moduli spaces of central curvature connections with fixed
determinant.  Then Sections~\ref{centralsurf} and \ref{sec:cobord}
construct a partial symplectic-valued field theory in the sense of
Theorem~\ref{cerfthm}.

\subsection{The symplectic category}
\label{symp} 

The moduli spaces of connections over compact oriented surfaces will
be symplectic manifolds, whose dimension varies with the genus of the
surfaces.  Thus the morphisms between these symplectic manifolds,
associated to elementary bordisms, can not in general be
symplectomorphisms.  However, elementary bordisms define the more
general morphisms between symplectic manifolds introduced by Weinstein
\cite{we:sc} as follows.

\begin{definition} \label{geomcomp} {\rm (Geometric composition of Lagrangian correspondences)} 
For $j = 0,1,2$ let $M_j$ be a symplectic manifold with symplectic
form $\omega_{M_j}$.
\begin{enumerate} 
\item A {\em Lagrangian correspondence} from $M_0$ to $M_1$ is a
  Lagrangian submanifold $L\subset M_0^-\times M_1$ with respect to
  the symplectic structure $(-\omega_{M_0}) \oplus \omega_{M_1}$.
\item 
The {\em geometric composition} of Lagrangian correspondences
$$L_{1}\subset M_0^-\times M_1, \quad L_{2}\subset M_1^-\times M_2$$ 
is the point set
\begin{equation} \label{circ}
L_{1}\circ L_{2} := \pi_{M_0\times M_2} \bigl( ( L_{1} \times L_{2} )
\cap ( M_0 \times \Delta_{M_1} \times M_2 ) \bigr) \subset M_0\times
M_2 .
\end{equation} 
\item 
A geometric composition is called {\em transverse} if the intersection
in \eqref{circ} is transverse (and hence smooth) and {\em embedded} if
the projection $\pi_{M_0\times M_2}$ restricts to an injection of the
smooth intersection.  In that case the image of the intersection is a
smooth Lagrangian correspondence $L_{1} \circ L_{2} \subset M_0^-
\times M_2$.
\end{enumerate} 
\end{definition}

In general, even after a generic perturbation, the fiber product
$L_1 \times_{M_1} L_2$ is at most immersed by projection onto
$M_0 \times M_2$. In order to be able to work more easily with
holomorphic curves, however, we wish to have embedded Lagrangians.  In
\cite{we:co} and \cite{Ainfty} we thus constructed a symplectic
category for Lagrangian correspondences using a more algebraic
composition, as follows.

\begin{definition}  {\rm (Algebraic composition of Lagrangian correspondences)} 
\begin{enumerate}
\item A {\em generalized Lagrangian correspondence} $\ul{L}$ from
  $M_-$ to $M_+$ (both symplectic manifolds) of length $m \ge 0$
  consists of
\begin{enumerate}
\item a sequence $N_0,\ldots,N_m$  of
  symplectic manifolds with $N_0 = M_-$ and $N_m = M_+$; and 
\item 
a sequence $\ul{L} = (L_{1},\ldots, L_{m})$ of compact 
  Lagrangian correspondences with $L_{j} \subset N_{j-1}^-\times 
  N_{j}$ for $j=1,\ldots,m$.
\end{enumerate}
Here we allow empty symplectic manifolds or Lagrangian correspondences.
\item 
The {\em algebraic composition} of generalized Lagrangian
correspondences $\ul{L}$ and $\ul{L}'$ is given by concatenation
$\ul{L}\concat\ul{L}'=(L_{1},\ldots, L_{m},L_{1}',\ldots, L_{m'}')$.
\end{enumerate} 
\end{definition} 

\begin{definition} {\rm (Symplectic category)}  
Let $\Symp^{\sharp}$ be the category whose
\begin{enumerate}
\item objects are symplectic manifolds $(M,\omega)$ (in particular, we
  include the empty manifold $\emptyset$);
\item morphisms $\Hom(M_-,M_+)$ of $\Symp^{\sharp}$ are compact
  oriented generalized Lagrangian correspondences $\ul{L}$ from $M_-$
  to $M_+$ equipped with gradings and relative spin structure modulo
  the {\em composition equivalence} relation $\sim$ generated by
\begin{equation} \label{compequiv}  
\bigl(\ldots,L_{j},L_{j+1},\ldots \bigr) \sim \bigl(\ldots,L_{j}\circ
L_{j+1},\ldots\bigr)
\end{equation} 
for all sequences and $j$ such that $L_{j} \circ L_{j+1}$ is
transverse and embedded; we also set $\Delta_M \sim \emptyset $ where
$\Delta_M \subset M^- \times M$ is the diagonal and $\emptyset$ is the
empty sequence.
\item  composition of morphisms $[\ul{L}]\in\Hom(M,M')$ and
$[\ul{L}']\in\Hom(M',M'')$ is defined by
$$
[\ul{L}]\circ[\ul{L}'] := [\ul{L}\concat\ul{L}'] \;\in\Hom(M,M'') .
$$
\end{enumerate}
An identity $1_M\in\Hom(M,M)$ is given by the equivalence class of the
empty sequence $[\emptyset]$ of length zero (since concatenating with
the empty set is the identity on sequences) or equivalently, the
equivalence class $1_M:=[\Delta_M]$ of the diagonal.
\end{definition}

\begin{remark} \label{graphs}
Let $\Symp$ be the category of (symplectic manifolds,
symplectomorphisms).  There is a canonical functor
$$ \Symp \to \Symp^{\sharp}, \quad \left( \begin{array}{cc} M \mapsto M
  \\ \phi \mapsto [(\on{graph}(\phi))] \end{array} \right) .$$
That is, the identity maps to the diagonal correspondence and the
composition of graphs is the graph of the composition of two
symplectomorphisms.  Indeed, the geometric composition 
\begin{equation} \label{graphcomp}
\graph(\varphi_2) \circ \graph(\varphi_1) =  \graph(\varphi_1 \circ
\varphi_2) 
\end{equation} 
(reversal of order due to conventions) of graphs of symplectomorphisms
$\varphi_1,\varphi_2$ is always transverse and embedded.
\end{remark} 

\begin{definition} \label{sympequiv} {\rm (Symplectomorphism equivalences)}  
Let $\ul{L},\ul{L}'$ be generalized Lagrangian correspondences from
$M_-$ to $M_+$ of the same length $m$.  A {\em symplectomorphism
  equivalence} from $\ul{L}$ to $\ul{L}'$ is a collection of
symplectomorphisms $(\varphi_j: N_j \to N_j')_{j=0,\ldots,m}$ such that
$$\varphi_0 = \Id_{M_-}, \quad \varphi_m = \Id_{M_+}, \quad L_{j}' =
(\varphi_{j-1} \times \varphi_j)(L_{j}), \ \ j = 1,\ldots,m .$$
\end{definition}

\begin{lemma} \label{induces} {\rm (Composition equivalences via symplectomorphism
equivalences)} 
Any symplectomorphism equivalence from $\ul{L}$ to $\ul{L}'$ induces
 a composition equivalence $\ul{L}\sim\ul{L}'$.
\end{lemma}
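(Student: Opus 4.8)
The plan is to reduce the statement to the single move of replacing a consecutive pair $(L_j, L_{j+1})$ by its geometric composition $L_j \circ L_{j+1}$, and to show that a symplectomorphism equivalence produces exactly such a sequence of legal moves. First I would observe that the composition equivalence relation $\sim$ on generalized Lagrangian correspondences (Definition of $\Symp^\sharp$) is generated by two elementary operations: the move \eqref{compequiv} whenever $L_j \circ L_{j+1}$ is transverse and embedded, and the move $\Delta_M \leftrightarrow \emptyset$ (insertion or deletion of a diagonal). So it suffices to exhibit a finite chain of such moves connecting $\ul{L}$ to $\ul{L}'$ given the symplectomorphisms $(\varphi_j\colon N_j \to N_j')_{j=0,\ldots,m}$ with $\varphi_0 = \Id_{M_-}$ and $\varphi_m = \Id_{M_+}$.

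The key idea is that the graph of a symplectomorphism behaves like an identity under geometric composition, and composing with it implements exactly the ``transport'' $L_j \mapsto (\varphi_{j-1}\times\varphi_j)(L_j)$. Concretely, for a symplectomorphism $\varphi\colon N \to N'$ write $\Gamma_\varphi := \graph(\varphi) \subset N^-\times N'$, a Lagrangian correspondence. By Remark~\ref{graphs}, geometric compositions of graphs of symplectomorphisms with each other are always transverse and embedded; more generally, for any Lagrangian correspondence $L \subset N_{j-1}^- \times N_j$ the compositions $\Gamma_{\varphi_{j-1}}^{-1} \circ L$ and $L \circ \Gamma_{\varphi_j}$ (using the obvious inverse-graph correspondences from $N_{j-1}'$ to $N_{j-1}$ etc.) are transverse and embedded, with image $(\varphi_{j-1}\times\varphi_j)(L) = L_j'$; this is a direct computation with the fibre-product description \eqref{circ}, since intersecting with a graph is transverse and the projection is injective. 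Thus I would start from
$$
\ul{L} = (L_1, \ldots, L_m)
$$
and first insert diagonals: using $\Delta_{N_j} \sim \emptyset$, rewrite $\ul{L}$ as the composition-equivalent sequence
$$
(L_1, \Delta_{N_1}, L_2, \Delta_{N_2}, \ldots, \Delta_{N_{m-1}}, L_m),
$$
then replace each interior $\Delta_{N_j}$ by the equivalent two-term sequence $(\Gamma_{\varphi_j}, \Gamma_{\varphi_j}^{-1})$ — legal because $\Gamma_{\varphi_j} \circ \Gamma_{\varphi_j}^{-1} = \Delta_{N_j}$ is transverse and embedded. Now the sequence reads
$$
(L_1, \Gamma_{\varphi_1}, \Gamma_{\varphi_1}^{-1}, L_2, \Gamma_{\varphi_2}, \Gamma_{\varphi_2}^{-1}, L_3, \ldots, \Gamma_{\varphi_{m-1}}, \Gamma_{\varphi_{m-1}}^{-1}, L_m).
$$
Regrouping (which is automatic: a generalized correspondence is just a sequence, and $\sim$ is generated by local moves), I compose $\Gamma_{\varphi_{j-1}}^{-1}$ with $L_j$ and then with $\Gamma_{\varphi_j}$, each step a legal \eqref{compequiv}-move, obtaining $(\varphi_{j-1}\times\varphi_j)(L_j) = L_j'$ in slot $j$; at the two ends $\varphi_0 = \Id$ and $\varphi_m = \Id$ so $\Gamma_{\varphi_0} = \Delta_{M_-}$ and $\Gamma_{\varphi_m} = \Delta_{M_+}$ can be deleted via $\Delta \sim \emptyset$. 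The result is $(L_1', \ldots, L_m') = \ul{L}'$, so $\ul{L} \sim \ul{L}'$.

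The main obstacle — really the only nontrivial point — is verifying that for a Lagrangian correspondence $L$ and a symplectomorphism graph the geometric composition is genuinely transverse and embedded (not merely immersed), so that the move \eqref{compequiv} actually applies. This is where one must be slightly careful: intersecting $L_1 \times L_2$ with $M_0 \times \Delta_{M_1} \times M_2$ is transverse precisely because one of the factors is a graph over $M_1$, which meets the diagonal transversally fibrewise; and the projection $\pi_{M_0\times M_2}$ is injective because the graph condition pins down the $M_1$-coordinate uniquely. I would state this as a short preliminary lemma (or fold it into Remark~\ref{graphs}), prove it by the fibre-product computation, and then the bookkeeping above finishes the proof. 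Everything else is purely formal manipulation of the local generating moves of $\sim$, so no further analytic input is needed.
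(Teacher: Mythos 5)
Your proof is correct and rests on the same key facts as the paper's: that graphs of symplectomorphisms compose transversally and embeddedly with any Lagrangian correspondence, and that such compositions implement the transport $L_j \mapsto (\varphi_{j-1}\times\varphi_j)(L_j)$. The only difference is bookkeeping — the paper sweeps from right to left using the identity $L_j \circ \mgraph(\varphi_j) = \mgraph(\varphi_{j-1}) \circ L_j'$ (contracting and re-expanding at each step), whereas you insert all the diagonals $\Delta_{N_j} = \mgraph(\varphi_j) \circ \mgraph(\varphi_j)^{-1}$ at once and regroup — so this is essentially the same argument.
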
 

\begin{proof}  Consider the commutative diagram 
$$ \begin{diagram} \node{M_- = N_0} \arrow{e,t}{L_{1}}
    \arrow{s,l}{\varphi_0=\Id} \arrow{se} \node{N_1} \arrow{e,t}{L_{2}}
    \arrow{s,r}{\varphi_1} \arrow{se} \node{\ldots} \arrow{e,t}{L_{m}}
    \arrow{s,r}{\varphi_{m-1}} \arrow{se} \node{N_m = M_+}
    \arrow{s,r}{\varphi_m=\Id} \\ \node{M_- = N_0'} \arrow{e,t}{L_{1}'}
    \node{N_1'} \arrow{e,t}{L_{2}'} \node{\ldots} \arrow{e,t}{L_{m}'}
    \node{N_m' = M_+} \end{diagram}.
 $$
Each diagonal morphism can be written in two ways as the composition
of a Lagrangian correspondence with the graph of a symplectomorphism,
$$ L_{j} \circ \mgraph(\varphi_{j}) = \mgraph(\varphi_{j-1}) \circ
L'_{j} .
$$
A composition equivalence from $\ul{L}$ to $\ul{L}'$ is defined by
first replacing $L_{m}$ with $(\mgraph(\varphi_{m-1}), L_{m}')$, then
iteratively replacing $(L_{j}, \mgraph(\varphi_{j}))$ with
$(\mgraph(\varphi_{j-1}),L'_{j})$ for $j=m-1$ to $j=2$, and eventually
replacing $(L_{1}, \mgraph(\varphi_{1}))$ with $L'_1$.
\end{proof}

For the purposes of Floer theory, we will need our symplectic
manifolds and correspondences to carry additional structures and
satisfy additional hypotheses.  

\begin{definition}  \label{admiss} {\rm (Monotone symplectic manifolds)}  
For any monotonicity constant $\tau>0$ we introduce the following
admissible classes of symplectic manifolds and generalized Lagrangian
correspondences.

\begin{enumerate}
\item
 A symplectic manifold $(M,\omega)$ is {\em monotone} with
  monotonicity constant $\tau $ if $\tau c_1(M) = [\omega]$ in
  $H^2(M)$.

\item A symplectic manifold $(M,\omega)$ is {\em $\tau$-admissible} if
  it is compact, monotone with monotonicity constant $\tau$, and has
  even minimal Chern number:
$$ \lan c_1(M) , H_2(M,\Z) \ran \subset 2 \Z .$$
\item A generalized Lagrangian correspondence
  $\ul{L} = (L_1,\ldots, L_m)$ from $M_-$ to $M_+$ is {\em admissible}
  if each Lagrangian correspondence in the sequence $L_i$ is
  simply-connected, compact, oriented, and relatively spin.
\end{enumerate}
Let $\Symp^{\sharp}_{\tau,N}$ denote the category whose
\begin{enumerate} 
\item[(i)] objects are $\tau$-admissible symplectic manifolds $M$
  equipped with $N$-fold Maslov covers $\Lag^N(M) \to \Lag(M)$ in the
  sense of Seidel \cite{se:gr};
\item[(ii)] morphisms from $M_0$ to $M_1$ are equivalence classes
  $[\ul{L}]$ of admissible generalized Lagrangian correspondences from
  $M_0$ to $M_1$, each correspondence in the sequence equipped with
  relative spin structures and $N$-gradings, and where the composition
  equivalence relation $\sim$ is that generated by \eqref{compequiv}
  restricted to such correspondences; and
\item[(iii)] composition of morphisms $\ul{L}_{01}, \ul{L}_{12}$ is
  defined as concatenation $\ul{L}_{01} \# \ul{L}_{12}$ as before.
\end{enumerate}
\end{definition} 

\begin{remark} 
\begin{enumerate}
\item {\rm (Other possible assumptions on correspondences)} The
  condition in (c) can be replaced with other conditions that
  guarantee monotonicity, or just requiring monotonicity itself, but
  in practice in this paper we just check simply-connectedness.
\item {\rm (Inclusion of monotone symplectic categories in the
  symplectic category)} There is a canonical functor
  $\Symp^{\sharp}_{\tau,N} \to \Symp^{\sharp}$ induced by inclusion of
  objects 
  $$ \on{Ob}(\Symp^{\sharp}_{\tau,N}) \hookrightarrow \on{Ob}(
  \Symp^{\sharp})$$
  and on morphisms
  \begin{equation} \label{mormap} \on{Hom}(\Symp^{\sharp}_{\tau,N})
    \to \on{Hom}( \Symp^{\sharp}), \quad [\ul{L}]_{\tau,N} \to
    [\ul{L}] \end{equation}
mapping equivalence classes $[\ul{L}]_\tau$ of generalized admissible
Lagrangian correspondences to equivalence classes $[\ul{L}]$ of
generalized Lagrangian correspondences.  However, the map on morphisms
\eqref{mormap} may not be an inclusion since two admissible
generalized Lagrangian correspondences may be equivalent through a
non-admissible generalized Lagrangian correspondence.
\end{enumerate} 
\end{remark} 

\subsection{Moduli spaces of central curvature connections with fixed determinant}  
\label{central}

This section introduces the moduli spaces of central curvature
connections with fixed determinant on compact manifolds of any
dimension, for a general class of structure groups.

\subsubsection{Central curvature connections and gauge transformations}

We first introduce some notation for connections and gauge
transformations.  Let $G$ be a compact, connected Lie group with Lie
algebra $\g$.  Let $Z$ denote the center of $G$ and $\z$ its tangent
space in $\g$.  Let $X$ be a compact oriented manifold of dimension
$\dim(X) = n$ with (possibly empty) boundary.  Let $\pi: \, P \to X$
be a principal $G$-bundle.

\begin{definition} 
\begin{enumerate} 
\item {\rm (Associated vector bundles)} For any finite-dimensional
  real $G$-representation $V$ we denote by $P(V) = (P \times V)/G$ the
  {\em associated vector bundle}, where $G$ acts on $P \times V$ by
  $g(p,v) = (pg^{-1},gv)$.  Denote by
$$\Omega(X,P(V)) := \bigoplus_{k = 0}^n \Omega^k(X,P(V)) $$
the space of forms with values in $P(V)$.
 \item {\rm (Adjoint bundles)} In particular, $P(\g) = (P \times
   \g)/G$ is the {\em adjoint bundle} associated to the adjoint
   representation of $G$ on $\g$.
\item {\rm (Splittings of the adjoint bundles)} Any invariant subspace
  $\h \subset \g$ induces an inclusion $P(\h) \subset P(\g)$.  The
  splitting $\g = [\g,\g] \oplus \z$ into the semisimple and central
  parts of $\g$ induces a splitting of the adjoint bundle,
\begin{equation} 
\label{split2}
P(\g) = P([\g,\g]) \oplus P(\z).
\end{equation} 
In the case $G=U(r)$ to which we will specialize later, the center is
given by the diagonal matrices $Z=U(1){\rm Id}$, and the splitting is
$$\u(r)=\su(r)\oplus\u(1){\rm Id} .$$
\item {\rm (Affine space of connections)} Let $\A(P)$ be the space of
  connections on $P$,
$$ \A(P) = \left\{ \alpha \in \Omega^1(P,\g) \left| \begin{array}{ll}
  \alpha(\xi_P) = \xi & \forall \xi \in \g, \\ \alpha(vg) =
  \Ad(g^{-1}) \alpha(v) & \forall v \in TP, g \in G \end{array}
\right. \right\} .$$
Here $\xi_P \in \Vect(P)$ denotes the vector field generated by the
action of $\xi \in \g$. 
\item {\rm (Basic forms)} For any non-negative integer $k$ the space
  $\Omega^k(X,P(\g))$ of $k$-forms with values in $P(\g)$ is
  isomorphic via $\pi^*$ to the space $\Omega^k(P,\g)_{\on{basic}}$ of
  basic (that is, equivariant and horizontal) $k$-forms.  With this
  notation $\A(P)$ is an affine space modelled on $\Omega^1(X,P(\g))$.
  That is, $\Omega^1(X,P(\g))$ acts on $A(P)$ faithfully transitively
  by $ \alpha \mapsto \alpha + \pi^* b$ for $\alpha \in \A(P)$, $b \in
  \Omega^1(X,P(\g)) $.
\item {\rm (Curvature)} The curvature of $\alpha \in \A(P)$ is the two
  form $F_\alpha \in \Omega^2(X,P(\g))$ defined by
$$ \pi^* F_\alpha = \d \alpha +
\tfrac 12 [\alpha\wedge\alpha] \in \Omega^2(P,\g)_{\on{basic}} .$$  
\item {\rm (Covariant derivative)} The covariant derivative in the
adjoint representation is
$$\d_\alpha : \Omega^{*}(X,P(\g)) \to \Omega^{{*} +1
}(X,P(\g)), \ \ \ \pi^* \d_\alpha \beta = \d \pi^* \beta + [\alpha\wedge \pi^* \beta] .$$
\item {\rm (Bianchi identity)} $\d_\alpha F_\alpha = 0$.
\item {\rm (Central curvature connections)} A connection $\alpha$ is
  {\em central curvature} if $F_\alpha$ takes values in $P(\z) \subset
  P(\g)$, that is, $F_\alpha^{[\g,\g]} = 0$.
\item {\rm (Group of gauge transformations)} Let
$$ \G(P) = \bigl\{ \phi: P \to P \, \big| \ \pi \circ \phi = \pi,
\ \phi(pg) = \phi(p)g \ \ \forall \ p \in P, \ g \in G \big\} $$
denote the group of gauge transformations, that is, $G$-equivariant
automorphisms of $P$.  
\item {\rm (Action of gauge transformations on connections)} The group
  $\G(P)$ acts on the left on $\A(P)$ by
$$ \G(P) \times \A(P) \to \A(P), \quad (\phi,\alpha) \mapsto
 (\phi^{-1})^* \alpha .$$
\item {\rm (Infinitesimal gauge transformations)} The Lie algebra of
  $\G(P)$ can be identified with $\Omega^0(X,P(\g))$ by associating
  the vector field $p \mapsto\frac \d{\d t}|_{t=0} p\exp(t\xi)$ to
  $\xi\in\Omega^1(P,\g)_{\rm basic}=\Omega^0(X,P(\g))$.  With this
  notation, the infinitesimal action of $\G(P)$ on $\A(P)$ is given by
\begin{equation} \label{inf action}
 \Omega^0(X,P(\g)) \times \A(P) \to \Omega^1(X,P(\g)), \quad (\xi, \alpha)
\mapsto  \d_\alpha \xi .
\end{equation}
\item {\rm (Action of gauge transformations on covariant derivatives)}
  The action of $\phi\in\G(P)$ on $P$ induces an action on $P(V)$ for
  any $G$-representation $V$, denoted by $\phi_V$.  The covariant
  derivative $\d_\alpha$ and curvature $F_\alpha$ transform as 
$$ \d_{\phi \alpha} = \phi_\g
  \d_\alpha \phi_\g^{-1} , \quad  F_{\phi
    \alpha} = \phi_{\g} F_\alpha .$$ 
Hence the subset of central curvature connections is invariant under
gauge transformations.
\end{enumerate}
\end{definition} 

\begin{remark} 
\begin{enumerate} 
\item {\rm (Splitting of the covariant derivative)} Using the
  splitting \eqref{split2} we write
$$ \d_\alpha = \d_\alpha^\z \oplus \d_\alpha^{[\g,\g]} \qquad
\text{and}\qquad F_\alpha = F_\alpha^\z \oplus F_\alpha^{[\g,\g]} .$$
\item {\rm (Basic inner product)} Let $ \langle \cdot , \cdot \rangle$
  denote the basic invariant inner product on $\g$, see
  \cite[p.49]{ps:lg}.  For $G$ simple, this is the unique inner
  product such that the norm-square of the highest coroot is~$2$.  For
  example, in the case that $G = U(r)$, the basic inner product is
  $\lan \xi, \zeta \ran = - \Tr(\xi \zeta)$ for $\xi,\zeta \in \u(r)$.
\label{basic}
\item {\rm (Adjoint of the covariant derivative)} A choice of metric
  on $X$ induces a Hodge star operator
$$*:\Omega^{k}(X,P(\g)) \mapsto \Omega^{n - k}(X,P(\g)), \quad k =
  0,\ldots, n .$$
Together with the inner product on $\g$ this induces a metric on each
$\Omega^k(X,P(\g))$.  The formal adjoint of the covariant derivative
is
$$ \d_\alpha^* : \Omega^\bullet(X,P(\g)) \to \Omega^{\bullet
  -1}(X,P(\g)), \ \ \beta \mapsto - (-1)^{(n- \bullet)(\bullet - 1)} *
\d_\alpha * \beta.$$
\end{enumerate} 
\end{remark}

\subsubsection{Linear theory}

In this subsection we discuss the cohomology of the covariant
derivative associated to a central curvature connection.  We review
the well-known fact that in certain dimensions the cohomology of a
compact oriented manifold with boundary $X$ restricts to a Lagrangian
subspace in the middle dimensional cohomology on the boundary
$\partial X$.

\begin{definition}  Let $P \to X$ be a principal $G$-bundle.  
\begin{enumerate} 
\item {\rm (Cohomology of a central curvature connection)} Let $\alpha
  \in \A(P)$ be a central curvature connection.  Since its curvature
  $F_\alpha$ is central we have $\d_\alpha^2 = 0$.  Define cohomology
  groups
$$ H(X;\d_\alpha) = \bigoplus_{k = 0}^n H^k(X;\d_\alpha),
  \ \ \ H^k(X;\d_\alpha) = \frac{\ker(\d_\alpha |
    \Omega^k(X,P(\g)))}{\im(\d_\alpha | \Omega^{k-1}(X,P(\g)))} .$$
\item {\rm (Relative cohomology of a central curvature connection)}
  Denote restriction to the boundary by 
  \begin{equation} \label{defrho} \rho: \Omega(X,P(\g)) \to
    \Omega(\partial X; P(\g) |_{\partial X}) .\end{equation}
  The map $\rho$ is a cochain map and so induces a map of cohomologies
  $ H(X; \d_\alpha) \to H(\partial X ; \d_{\alpha | \partial X}) .$
  Let $H(X,\partial X;\d_\alpha)$ be the relative cohomology groups
  consisting of forms whose tangential part to the boundary vanishes,
$$ H(X,\partial X;\d_\alpha)
 = \ker(\d_\alpha | \ker \rho)/
\im(\d_\alpha | \ker \rho) .$$
\item {\rm (Hodge isomorphisms)} The Hodge isomorphisms on manifolds
  with boundary (see e.g.\ \cite[Section 4.1]{gi:inv}) give
\begin{align}\label{hodge isom}
 H(X;\d_\alpha) &\cong \ker(\d_\alpha) \cap \im( \d_\alpha
)^\perp = \ker ( \d_\alpha \oplus \d_\alpha^* \oplus \rho * ) , \\
H(X,\partial X;\d_\alpha) &\cong \ker(\d_\alpha\oplus\rho) \cap \im( \d_\alpha|_{\ker\rho} )^\perp = 
\ker ( \d_\alpha \oplus \d_\alpha^* \oplus \rho  ).  \nonumber
\end{align}
Using the first identification we have a Hodge decomposition 
\begin{equation} \label{hodge}
 \Omega(X,P(\g)) = \im(\d_\alpha) \oplus H(X;\d_\alpha)
\oplus \im(\d_\alpha^* | \ker \rho * ) .
\end{equation}
Here the direct sum holds in any $L_2^s$-Sobolev completion for $s\geq
0$ (that is we use the norm $\bar{H}^s$ of \cite{ho:an3} measuring $s$
fractional derivatives in $L^2$), as a consequence of the following
elliptic estimate from e.g.\ \cite[Section 20.1]{ho:an3}.  Let
$\Pi_{H^k}$ denote the $L^2$-orthogonal projection onto
$H^k(X;\d_\alpha)\subset\Omega^k(X,P(\g))$.  There is a constant $C$
depending on $\alpha$ such that for all $\eta\in \Omega^k(X,P(\g))$
\begin{multline} \label{hodge estimate}
\| \eta \|_{L_2^{s+1}(X)} \leq C \bigl( \| (\d_\alpha + \d_\alpha^*)\eta \|_{L_2^{s}(X)} + \|\rho*\eta
\|_{L_2^{s+ 1/2}(\partial X)} + \|\Pi_{H^k}\eta \|_{L_2(X)} \bigr) .
\end{multline}
\item {\rm (Non-degeneracy of the duality pairings)} Using these Hodge
  isomorphisms, the Hodge star induces a duality isomorphism
\begin{equation} \label{Hodge duality} 
H^{*}(X;\d_\alpha) \to H^{n - {*}}(X,\partial X;\d_\alpha), \ \ \
 \beta \mapsto * \beta .
\end{equation}
This duality implies non-degeneracy of the pairing
\begin{equation} \label{Hodge pairing}
 H^{*}(X;\d_\alpha) \times H^{n - {*}}(X,\partial X;\d_\alpha) \to
 \R, \ \ \ (\gamma,\beta) \mapsto \int_X \langle 
\gamma \wedge \beta  \rangle .\end{equation}
Replacing $X$ with $\partial X$ we have a non-degenerate pairing on $
H(\partial X;\d_{\alpha|_{\partial X}}) $.  
\end{enumerate} 
\end{definition} 

If $\dim(X) = 2k+1$ is odd, the pairing in the last item restricts to
a non-degenerate pairing on the middle dimensional homology
$H^k(\partial X;\d_{\alpha |_{\partial X}})$.  The pairing is
skew-symmetric, hence symplectic, if $k$ is odd. In that case, the
following Lemma shows that the restriction of $H(X;\d_\alpha)$ to the
boundary gives rise to a Lagrangian subspace.

\begin{lemma} \label{maxiso} {\rm (Lagrangian restriction of odd cohomology)}  
Let $\alpha \in \A(P)$ be a central curvature connection.  The image
$\rho ( H(X;\d_\alpha))$ of $H(X;\d_\alpha)$ in $ H(\partial
X;\d_{\alpha |_{\partial X}})$ is maximally isotropic.  In particular,
if $\dim(X) = 2k+1$ for some integer $k$ then $\rho (
H^k(X;\d_\alpha))\subset H^k(\partial X;\d_{\alpha |_{\partial X}})$
is maximally isotropic, and if $k$ is odd,
\begin{equation} \label{half} \dim \bigl( \rho ( H^k(X;\d_\alpha) ) \bigr) = \tfrac 12 \dim \bigl( H^k(\partial X;\d_{\alpha |_{\partial X}}) \bigr) .
\end{equation}
\end{lemma}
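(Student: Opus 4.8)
The plan is to prove that $\rho(H(X;\d_\alpha))$ is maximally isotropic by a standard Stokes-theorem-plus-dimension-count argument, treating the full cohomology first and then specializing. First I would show isotropy: given closed forms $\gamma,\gamma' \in \ker(\d_\alpha)$ representing classes in $H(X;\d_\alpha)$, one has, since $\d_\alpha F_\alpha$ is central and $\d_\alpha^2 = 0$ for a central curvature connection, the Leibniz rule $\d \lan \gamma \wedge \gamma' \ran = \lan \d_\alpha \gamma \wedge \gamma' \ran \pm \lan \gamma \wedge \d_\alpha \gamma' \ran = 0$ (here the pointwise pairing $\lan \cdot \wedge \cdot \ran$ uses the basic invariant inner product on $\g$, and invariance guarantees this Leibniz identity holds with $\d_\alpha$ in place of $\d$). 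Hence by Stokes' theorem $\int_{\partial X} \lan \rho\gamma \wedge \rho\gamma' \ran = \int_X \d\lan \gamma \wedge \gamma' \ran = 0$, so $\rho(H(X;\d_\alpha))$ pairs trivially with itself under the pairing \eqref{Hodge pairing} on $\partial X$. This shows the image is isotropic with respect to the (possibly degenerate in general, nondegenerate in the relevant top-degree-pairing cases) intersection form on the boundary.

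Next I would establish maximality via the long exact sequence of the pair $(X,\partial X)$ for $\d_\alpha$-cohomology, namely
$$\cdots \to H^{k}(X,\partial X;\d_\alpha) \to H^k(X;\d_\alpha) \xrightarrow{\rho} H^k(\partial X;\d_{\alpha|_{\partial X}}) \xrightarrow{\delta} H^{k+1}(X,\partial X;\d_\alpha) \to \cdots,$$
which is available because $\d_\alpha^2 = 0$ makes $(\Omega^\bullet(X,P(\g)),\d_\alpha)$ and its relative subcomplex honest complexes; the connecting map and exactness are formal. The key input is the duality \eqref{Hodge duality}, $H^{*}(X;\d_\alpha) \cong H^{n-{*}}(X,\partial X;\d_\alpha)$ via the Hodge star, together with the nondegenerate pairing \eqref{Hodge pairing}. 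From exactness, $\ker(\delta: H^k(\partial X) \to H^{k+1}(X,\partial X)) = \im(\rho)$; dualizing, the annihilator of $\im\rho$ under the boundary pairing is the image of $H^k(X,\partial X;\d_\alpha) \to H^k(\partial X;\d_{\alpha|_{\partial X}})$, which — using duality to identify $H^{n-1-k}(X;\d_\alpha)$-type groups and the fact that on $\partial X$ the intrinsic pairing is $(-1)$-something symmetric — coincides with $\im\rho$ itself. Thus $(\im\rho)^\perp = \im\rho$, i.e. $\im\rho$ is maximally isotropic. When $\dim X = 2k+1$, the only piece of the boundary pairing that is nondegenerate is the one on $H^k(\partial X;\d_{\alpha|_{\partial X}})$, so the maximal isotropy statement becomes the dimension count \eqref{half} precisely when the form is skew-symmetric, i.e. when $k$ is odd; the symmetric case ($k$ even) still gives maximal isotropy but without the clean half-dimension formula unless one also invokes a signature-type argument, which is why the Lemma only claims \eqref{half} for $k$ odd.

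The main obstacle I anticipate is being careful about the interplay of three things: (i) that $\rho$ here denotes both restriction of forms and the induced map on cohomology, and that \eqref{hodge isom} identifies cohomology with harmonic representatives satisfying appropriate boundary conditions, so that the Hodge star genuinely implements duality between absolute and relative cohomology (this requires the elliptic estimate \eqref{hodge estimate} and the decomposition \eqref{hodge}, all of which are quoted earlier); (ii) that the long exact sequence of the pair is compatible with Hodge duality, i.e. the square relating $\delta$ on one side to $\rho$ on the dual side commutes up to sign — this is the classical statement that Lefschetz duality intertwines the two long exact sequences, and it is what upgrades isotropy to maximality; and (iii) getting the signs right so that the $k$ odd case yields a symplectic (not just nondegenerate) form on $H^k(\partial X;\d_{\alpha|_{\partial X}})$, which was already observed in the paragraph preceding the Lemma. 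Once these identifications are in place, the proof is essentially the formal argument that in a complex with a perfect duality pairing between a complex and its "dual shifted" complex, the image of restriction to the boundary is a Lagrangian for the induced pairing on the boundary — exactly as in the de Rham case. I would present it in that order: Leibniz + Stokes for isotropy, then LES + duality for maximality, then specialize to $\dim X = 2k+1$ and read off \eqref{half} when $k$ is odd.
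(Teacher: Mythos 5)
Your proposal is correct and takes essentially the same route as the paper: isotropy via Stokes' theorem, and maximality via the long exact sequence of the pair together with its compatibility (the commutative ladder) with the duality \eqref{Hodge duality}--\eqref{Hodge pairing}, so that any class annihilating $\im\rho$ has vanishing image under the connecting map and hence, by exactness, lies in $\im\rho$. (One immaterial slip: the long exact sequence contains no map $H^k(X,\partial X;\d_\alpha)\to H^k(\partial X;\d_{\alpha|_{\partial X}})$; rather the annihilator of $\im\rho$ is identified with $\ker\delta=\im\rho$ directly, which is exactly the commutativity you invoke in your point (ii).)
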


\begin{proof} Stokes' theorem implies that the image
$\rho ( H(X;\d_\alpha))$ is isotropic: For $\beta_1,\beta_2 \in
  \ker(\d_\alpha)$ 
\begin{eqnarray*}
 \int_{\partial X} \lan \rho(\beta_1) \wedge \rho(\beta_2) \ran
 &=& \int_X
  \d \lan \beta_1 \wedge \beta_2 \ran \\
 &=& \int_X
  \lan \d_\alpha \beta_1 \wedge \beta_2 \ran 
+ (-1)^{|\beta_1|}   \lan \beta_1 \wedge \d_\alpha \beta_2 \ran  
= 0 .\end{eqnarray*}
To see that the image is maximal isotropic, first note that restriction to
the boundary induces a long exact sequence
\begin{equation} \label{les}
\ldots \to H^{\bullet}(X,\partial X;\d_\alpha) \to
H^{\bullet}(X;\d_\alpha) \to H^{\bullet}(\partial X;\d_{\alpha |_{\partial X}})
\to \ldots .\end{equation}
Now consider the commutative diagram formed from the long exact
sequence \eqref{les} and its dual:
\[ 
\ldots \begin{diagram}
\node{ H^{\bullet}(X;\d_\alpha)} \arrow{e} \arrow{s} 
\node{H^{\bullet}(\partial X;\d_{\alpha |_{\partial X}})} \arrow{s} \arrow{e,t}{c}
\node{ H^{{\bullet}+1}(X,\partial X;\d_\alpha)} \arrow{s}
 \\
\node{ H^{n- {\bullet}} (X,\partial X;\d_\alpha)^\dual}  \arrow{e} 
\node{ H^{n -1 - {\bullet} }(\partial X;\d_{\alpha |_{\partial X}})^\dual} 
\arrow{e} \node{ H^{n- 1 - {\bullet}}(X;\d_\alpha)^\dual} 
\end{diagram} \ldots .
\]
Here the vertical arrows are given by \eqref{Hodge duality} and the
pairing \eqref{Hodge pairing}. To check commutativity, use Stokes'
theorem and the fact that the connecting morphism $c$ above is given
by $\beta\mapsto \d_{\alpha}\tilde\beta$ for any extension
$\tilde\beta|_{\partial X}=\beta$.

Now suppose $\beta \in H^{\bullet}(\partial X;\d_{\alpha |_{\partial
    X}})$ lies in the annihilator of the image of
$H^{n-1-{\bullet}}(X;\d_\alpha) $.  By definition  $\beta \in
H^{\bullet}(\partial X;d_{\alpha |_{\partial X}})$ maps to $0$ in
$H^{n- 1 - {\bullet}}(X;\d_\alpha)^\dual$.  Since the vertical maps
are isomorphisms and the diagram is commutative, the image of $\beta$
in $H^{{\bullet} + 1}(X,\partial X;\d_\alpha)$ vanishes, so by
exactness of the top sequence $\beta$ lies in the image of
$H^{\bullet}(X;\d_\alpha)$.  The last claim follows from the fact that
maximally isotropic subspaces of symplectic vector spaces are
half-dimensional.  For analogous results on Dirac operators on
manifolds with boundary see \cite[Theorem 22.24]{bo:el}.
\end{proof} 

\begin{corollary} \label{c maxiso}
If $X$ has dimension $3$ then $\rho ( H^1(X;\d_\alpha) ) \subset
H^1(\partial X;\d_{\alpha |_{\partial X}})$ is a Lagrangian subspace.
Furthermore, if $H^1(X,\partial X;\d_\alpha) = 0$ then $\rho:
H^1(X;\d_\alpha) \to H^1(\partial X;\d_{\alpha |_{\partial X}})$ is a
Lagrangian embedding.
\end{corollary}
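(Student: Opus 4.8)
The plan is to deduce both statements directly from Lemma~\ref{maxiso} together with the long exact sequence \eqref{les}, so no new analysis is needed. First I would observe that for $\dim(X) = 3$ we are in the case $n = 2k+1$ with $k = 1$, which is odd, so the discussion immediately preceding Lemma~\ref{maxiso} applies: the pairing
$$ H^1(\partial X;\d_{\alpha|_{\partial X}}) \times H^1(\partial X;\d_{\alpha|_{\partial X}}) \to \R, \qquad (\beta_1,\beta_2) \mapsto \int_{\partial X} \langle \beta_1 \wedge \beta_2 \rangle $$
is skew-symmetric (since interchanging two $1$-forms introduces a sign $(-1)^{1\cdot 1} = -1$) and non-degenerate by \eqref{Hodge pairing} with $X$ replaced by $\partial X$, hence symplectic. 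Lemma~\ref{maxiso} then says that $\rho(H^1(X;\d_\alpha))$ is maximally isotropic in this symplectic vector space, and a maximally isotropic subspace of a symplectic vector space is by definition Lagrangian; this gives the first assertion.

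For the second assertion I would use the portion of the long exact sequence \eqref{les} in degree one,
$$ \cdots \to H^1(X,\partial X;\d_\alpha) \to H^1(X;\d_\alpha) \xrightarrow{\ \rho\ } H^1(\partial X;\d_{\alpha|_{\partial X}}) \to \cdots, $$
and note that the hypothesis $H^1(X,\partial X;\d_\alpha) = 0$ forces the map into $H^1(X;\d_\alpha)$ to be zero, so by exactness $\rho$ is injective on $H^1(X;\d_\alpha)$. Combining this injectivity with the fact, already established in the first paragraph, that the image $\rho(H^1(X;\d_\alpha))$ is a Lagrangian subspace of $H^1(\partial X;\d_{\alpha|_{\partial X}})$, we conclude that $\rho$ is a Lagrangian embedding in the linear sense.

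Since both parts are immediate specializations of results stated earlier in the section, I do not anticipate any genuine obstacle; the only point requiring a moment's care is bookkeeping the parity — checking that $k=1$ is odd so that the relevant pairing is skew rather than symmetric, which is exactly the hypothesis under which Lemma~\ref{maxiso} yields the half-dimension statement \eqref{half} and hence the Lagrangian (rather than merely isotropic) conclusion.
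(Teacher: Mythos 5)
Your proof is correct and follows exactly the paper's own argument: the first claim is Lemma~\ref{maxiso} with $k=1$ (the pairing on $H^1(\partial X)$ being skew and non-degenerate, hence symplectic), and injectivity of $\rho$ comes from exactness of \eqref{les} in degree one together with the vanishing of $H^1(X,\partial X;\d_\alpha)$. No issues.
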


\begin{proof}   The first statement follows immediately from 
Lemma \ref{maxiso} with $k=1$.  The injectivity of $\rho$ in the
second statement follows from the long exact sequence \eqref{les}.
\end{proof} 

\subsubsection{Moduli spaces}
\label{func}

This subsection introduces the moduli space of connections with
central curvature and fixed determinant.  As before, $G$ is a compact
connected group and $X$ a compact oriented manifold with (possibly
empty) boundary.

\begin{definition} \ \ \label{comgroup}
\begin{enumerate} 
\item {\rm (Commutator subgroup)} Since $G$ is compact and connected,
  by Got\^o's theorem, the commutator mapping
  $(a_1,b_1,\ldots,a_g,b_g) \to \prod_{j=1}^g [a_j,b_j]$ is surjective
  onto the commutator subgroup
$$G_0:=[G,G] = \{ [g_1,g_2] \,|\, g_1,g_2 \in G\}.$$
In particular, $[U(r),U(r)] = SU(r)$.  More generally, $G_0$ is the
subgroup whose Lie algebra equals to the semisimple part $[\g,\g]$ of
the Lie algebra $\g$.
\item {\rm (Determinant homomorphism)} The group homomorphism $G \to
  G/G_0$ induces for any principal $G$-bundle $P$ a principal
  $G/G_0$-bundle 
$$\det(P) := P / G_0 \cong P \times_G (G/G_0)
  \overset{\pi'}{\longrightarrow} X .$$  
In the special case $G = U(r)$ the bundle $\det(P)$ is the principal
$U(1)$-bundle induced by the determinant $\det: U(r) \to U(1)$.
\item {\rm (Determinant of a connection)} The homomorphism of Lie
  algebras 
$$\pi_\z: \g \to \g/[\g,\g]\cong\z$$ 
induces a map $ \det : \A(P) \to \A(\det(P)) $.  Indeed for any
connection $\alpha\in\A(P)$ the form $\pi_\z\circ\alpha \in
\Omega^1(P,\z)$ is a basic form for the $G_0$-bundle $P\to\det(P)$ and
descends to a connection form on $\det(P)$.  Given a connection $
\delta \in \A(\det(P))$ denote by
$$
\A_\delta(P) = \bigl\{ \alpha \in \A(P) \, \big| \,  F_\alpha^{[\g,\g]} = 0 , \det(\alpha)=\delta \bigr\}
$$
its inverse image in the space of central curvature connections.
\item {\rm (Gauge transformations fixing the determinant)} The group
  of gauge transformations with trivial determinant $\G_0(P)$ is
  defined to be the kernel of the homomorphism $\G(P) \to
  \G(\det(P))$.  It acts on $\A_\delta(P)$ for any $\delta \in
  \A(\det(P))$.
\item {\rm (Moduli spaces with fixed determinant)} Denote by
$$ {M}_\delta(P) = \A_\delta(P)/ \G_0(P) $$
its quotient, the moduli space of connections with central curvature
and fixed determinant. 
\end{enumerate}
\end{definition} 

\begin{remark} \label{delta dont matter} {\rm (Independence of the moduli spaces
from the choice of determinant)} The spaces $M_\delta(P)$ as $\delta$
  ranges over connections on $\det(P)$ are identified as follows.  The
  vector space $\Omega^1(X,\z)$ acts on $\A(P)$ by $ \alpha \mapsto
  \alpha + \pi^*a$ for $a\in\Omega^1(X,\z)$.  The curvature and
  determinant transform as
$$ F_{\alpha + \pi^* a} = F_\alpha + \d a, \quad \det(\alpha + \pi^*a)
  = \det(\alpha) + \pi'^* a .$$  
Indeed, the defining equation is
$$
\pi_{G_0}^* \det(\alpha + \pi^*a) 
= \pi_\z \circ (\alpha + \pi^*a) 
= \pi_\z \circ \alpha + \pi_{G_0}^* \pi'^*a 
= \pi_{G_0}^* \bigl( \det(\alpha) + \pi'^*a \bigr) .
$$
Each $a\in\Omega^1(X,\z)$ induces an identification of moduli spaces
$$M_\delta(P) \to M_{\delta + \pi'^*a}(P) .$$
This shift provides unique identifications of
$M_{\delta_1}(P), M_{\delta_2}(P)$ for all
$\delta_1,\delta_2 \in \A(\det(P))$.  Indeed, $\A(\det(P))$
is \label{change8} an affine space over $\Omega^1(X,\z)$ via
$\delta\mapsto \delta + \pi'^*a$ for $a\in\Omega^1(X,\z)$.  We will
hence from now on refer to
$$ M(P) := M_\delta(P) = \A_\delta(P)/ \G_0(P) $$
as {\em the} moduli spaces of central curvature connections with fixed
determinant with only minor abuse of language.
\end{remark}

\begin{proposition} \label{smoothmod} {\rm (Condition for smoothness of the moduli space)}  
Let $\alpha \in \A(P)$ be a central curvature connection.  If
$H^0(X;\d_\alpha^{[\g,\g]}) = H^2(X;\d_\alpha^{[\g,\g]}) = 0$ then
$M(P)$ is a finite-dimensional orbifold at $[\alpha]$ with tangent
space isomorphic to $H^1(X;\d_\alpha^{[\g,\g]})$.  If in addition $G =
U(r)$, then $M(P)$ is a finite-dimensional manifold in a neighborhood
of $[\alpha]$.
\end{proposition}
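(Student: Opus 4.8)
The plan is to run the standard local analysis of Yang--Mills/flat moduli spaces (Atiyah--Bott for surfaces, Atiyah--Hitchin--Singer and Donaldson--Kronheimer in general), carried out on the manifold-with-boundary $X$ by means of the Hodge-theoretic package of this subsection. I would work throughout in $L^2_s$-Sobolev completions for $s$ large, so that gauge transformations of class $L^2_{s+1}$ act smoothly on connections of class $L^2_s$; the analytic input is the elliptic estimate \eqref{hodge estimate}. Since $F_\alpha$ is central and $\d_\alpha$ respects the splitting \eqref{split2}, the relevant deformation complex is
\begin{equation*}
\Omega^0(X,P([\g,\g]))\ \xrightarrow{\d_\alpha}\ \Omega^1(X,P([\g,\g]))\ \xrightarrow{\d_\alpha}\ \Omega^2(X,P([\g,\g]))
\end{equation*}
(with $\d_\alpha^2=0$), and its cohomology groups $H^0,H^1,H^2$ play the roles of infinitesimal stabilizer, Zariski tangent space, and obstruction space, respectively. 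As a first step, the stabilizer $\Gamma_\alpha$ of $\alpha$ in $\G_0(P)$ is a closed subgroup whose Lie algebra is $\ker\bigl(\d_\alpha\colon\Omega^0(X,P([\g,\g]))\to\Omega^1(X,P([\g,\g]))\bigr)=H^0(X;\d_\alpha^{[\g,\g]})=0$; hence $\Gamma_\alpha$ is discrete, and, being the stabilizer of a connection over a compact base, it is compact, hence finite.

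Next I would build the local model by the slice method. The gauge-orbit tangent space $\d_\alpha\,\Omega^0(X,P([\g,\g]))$ is closed and, by the decomposition \eqref{hodge}, has a closed $L^2$-orthogonal complement $\mathcal K_\alpha\subset\Omega^1(X,P([\g,\g]))$ (the coclosed $1$-forms satisfying the boundary condition appearing in \eqref{hodge estimate}); so by the implicit function theorem and local properness of the gauge action, a neighborhood of $[\alpha]$ in $M(P)$ is identified with $\{\,b\in\mathcal K_\alpha : F_{\alpha+b}^{[\g,\g]}=0\,\}$ near $0$, modulo $\Gamma_\alpha$. The curvature equation here is $F_{\alpha+b}^{[\g,\g]}=\d_\alpha b+\tfrac12[b\wedge b]=0$; its linearization at $b=0$ is $\d_\alpha\colon\mathcal K_\alpha\to\Omega^2(X,P([\g,\g]))$, whose kernel is the harmonic space $\mathcal K_\alpha\cap\ker\d_\alpha$, identified with $H^1(X;\d_\alpha^{[\g,\g]})$ by \eqref{hodge isom} (finite-dimensional by ellipticity), and whose image is the closed subspace $\d_\alpha\,\Omega^1(X,P([\g,\g]))$ (since $\Omega^1=\d_\alpha\,\Omega^0\oplus\mathcal K_\alpha$ and $\d_\alpha^2=0$), whose complement inside the $\d_\alpha$-closed $2$-forms is $H^2(X;\d_\alpha^{[\g,\g]})=0$. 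Since, by the Bianchi identity, the nonlinear curvature map takes values in the $\d_{\alpha+b}$-closed forms, the standard Kuranishi argument then shows $\{\,b\in\mathcal K_\alpha : F_{\alpha+b}^{[\g,\g]}=0\,\}$ is, near $0$, a smooth manifold of dimension $\dim H^1(X;\d_\alpha^{[\g,\g]})$; elliptic regularity makes its elements smooth and the description independent of $s$. Dividing by the finite group $\Gamma_\alpha$ exhibits $M(P)$ near $[\alpha]$ as a finite-dimensional orbifold with tangent space $H^1(X;\d_\alpha^{[\g,\g]})$.

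Finally, for $G=U(r)$ I would upgrade the orbifold chart to a manifold chart by showing that $\Gamma_\alpha$ acts trivially --- in fact that $\Gamma_\alpha=Z(SU(r))\cong\Z/r$ consists of the constant central gauge transformations. Any $\phi\in\Gamma_\alpha$ is pointwise unitary and $\d_\alpha$-parallel; diagonalizing $\phi$, its eigenvalues are constant on the connected manifold $X$ and its eigenbundles $E_1,\dots,E_k$ are $\d_\alpha$-parallel subbundles of the associated bundle $P(\C^r)$. If $k\ge 2$ then $\Pi_{E_1}-\tfrac{\rk E_1}{r}\Id$ is a nonzero $\d_\alpha$-parallel section of $P(\su(r))$, contradicting $H^0(X;\d_\alpha^{[\g,\g]})=0$; so $k=1$, i.e.\ $\phi$ is a constant scalar in $SU(r)$, hence central and acting trivially on $\A_\delta(P)$, and the orbifold chart of the previous paragraph is already a manifold chart. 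The step I expect to take the most care is the boundary analysis in the second paragraph: on a manifold with boundary, verifying that $\mathcal K_\alpha$ is a genuine slice and that $\d_\alpha|_{\mathcal K_\alpha}$ is Fredholm with kernel $H^1(X;\d_\alpha^{[\g,\g]})$ and cokernel $H^2(X;\d_\alpha^{[\g,\g]})$ --- which is exactly what the Hodge decomposition \eqref{hodge}, the estimate \eqref{hodge estimate}, and the Lagrangian boundary-value analysis of Lemma~\ref{maxiso} are designed to supply; for closed $X$ these inputs are classical and the argument simplifies accordingly.
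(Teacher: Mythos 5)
Your proposal is correct and follows essentially the same route as the paper: Sobolev completion plus the Hodge estimate \eqref{hodge estimate}, the implicit function theorem on a gauge slice with $H^2=0$ giving surjectivity of the linearized curvature operator and $H^0=0$ giving a finite stabilizer, and for $G=U(r)$ the eigenbundle-splitting argument showing any stabilizing gauge transformation is a constant central scalar (your parallel section $\Pi_{E_1}-\tfrac{\rk E_1}{r}\Id$ is, up to a factor of $i$, the derivative of the paper's one-parameter family of automorphisms). The only cosmetic difference is that the paper gauge-fixes the curvature equation to $(\d_{\alpha_1}^*\oplus\rho*)F^{[\g,\g]}=0$ before applying the implicit function theorem, whereas you phrase the same step as a Kuranishi argument on the slice.
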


\begin{proof}  
Fix $\delta\in\A(\det(P))$.  Any $\alpha \in \A_\delta(P)$ splits into
semi-simple and central part $\alpha= \alpha_ 1 + \pi_{G_0}^*\delta$,
where $\alpha_{1}\in\Omega^1(P,[\g,\g])$ satisfies 
\begin{equation} \label{solve} \pi^*
F_\alpha^{[\g,\g]}= \d\alpha_{1}+\frac 12 [\alpha_{1},\alpha_{1}] =
0 .\end{equation} 
By standard arguments (as for flat connections), any solution of
\eqref{solve} of Sobolev class $L_2^s$ with $2s>n$ is gauge equivalent to
a smooth solution.  Hence we can think of $M(P)$ as the quotient of
solutions of \eqref{solve} of class $L_2^s$ by the $L_2^{s+1}$-closure
of $\G_0(P)$.  In the first step, we will show that the equation
$\d\alpha_{1}+\frac 12 [\alpha_{1},\alpha_{1}] =0$ for $\alpha_{1}\in
\Omega^1(P,[\g,\g])_s$ cuts out a smooth Banach submanifold whose
tangent space at $\alpha_1$ is the kernel of
$$\d_\alpha^{[\g,\g]}=\d_{\alpha_{1}}:\Omega^1(X,P([\g,\g]))_s \to
\Omega^2(X,P([\g,\g]))_{s-1} .$$  
Here we denote by subscripts such as $\Omega(X,P([\g,\g]))_s$ the
$L_2^s$-completion of spaces of smooth forms such as
$\Omega(X,P([\g,\g]))$, and moreover choose $s\geq 2$.  Then by the
vanishing of $H^2(X;\d_\alpha^{[\g,\g]})$, the Hodge estimate
\eqref{hodge estimate} becomes
$$
\Vert b \Vert_{L_2^{s-1}(X)} \leq C \bigl(
  \Vert \d_{\alpha_{1}} b \Vert_{L_2^{s-2}(X)} 
+ \Vert \d_{\alpha_{1}}^* b \Vert_{L_2^{s-2}(X)} + \Vert * b |_{\partial X} \Vert_{L_2^{s-3/2}(\partial X)} \bigr).
$$
for all $b \in \Omega^2(X,P([\g,\g]))_{s-1}$.
A similar estimate, possibly with a different constant, holds with $\d_{\alpha_1}$ replaced by 
$\d_{\alpha_{1} + \pi^* a}$ for sufficiently small $\Vert a
\Vert_{L_2^s(X)}$.
Hence, applying this new estimate to $b = F_{\alpha_1}+\pi^*a$ and
using the Bianchi identity,
\begin{equation} \label{Falpha}
F_{\alpha_{1} + \pi^* a}^{[\g,\g]}=0 \quad \iff \quad 
 (\d_{\alpha_{1}}^* \oplus \rho *) F_{\alpha_{1} + \pi^* a}^{[\g,\g]}
 = 0  .\end{equation} 
It follows that $\A_\delta(P)$ near $\alpha$ is the set of sums $
\alpha + \pi^* a$ where $a$ is a zero of the map
\begin{equation} \label{themap} \Omega^1(X,P([\g,\g]))_s \to \im( \d_{\alpha_{1}}^* \oplus \rho * ), \ \
 a \mapsto (\d_{\alpha_{1}}^* \oplus \rho *) F_{\alpha_{1} + \pi^*
   a}^{[\g,\g]} .\end{equation}
Here the target 
$$
( \d_{\alpha_{1}}^* \oplus \rho * ) \Omega^2(X,P([\g,\g]))_{s-1} \subset 
\Omega^1(X,P([\g,\g]))_{s-2} \times \Omega^{n-2}(\partial X, 
P([\g,\g]) | \partial X)_{s-\frac 32} $$ 
is closed.  Indeed, note that $\im(\d_{\alpha_{1}})^\perp =
\ker(\d_{\alpha_1}^* \oplus \rho *)$.  This implies
$$\im( \d_{\alpha_{1}}^*
\oplus \rho * ) = \im( (\d_{\alpha_{1}}^* \oplus \rho *
)\d_{\alpha_1}) \cong \im(( \d_{\alpha_1} \oplus \d_{\alpha_1}^*
\oplus \rho*) \d_{\alpha_1})  ,$$ 
where the latter isomorphism holds since $\d_{\alpha_1}^2 = 0$.  Now
$\im(\d_{\alpha_1})$ is a closed subspace of
$\Omega^2(X,P([\g,\g]))_{s-1}$, by \eqref{hodge}, and the Hodge
estimate implies that the image of $\im(\d_{\alpha_1})$ under
$\d_{\alpha_1} \oplus \d_{\alpha_1}^* \oplus \rho*$ is also closed, by
\cite[Lemma A.1.1]{ms:jh}.

With this setup, the linearized operator $(\d_{\alpha_{1}}^* \oplus
\rho *)\d_{\alpha_{1}}$ of \eqref{themap} is surjective and has kernel
that of $\d_{\alpha_1}$ since $\ker(\d_{\alpha_1}^* \oplus \rho
*)=\im(\d_{\alpha_{1}})^\perp $. Hence the implicit function theorem
provides a smooth map from the formal tangent space
$\ker(\d_{\alpha_1})$ to its complement in \eqref{hodge}
$$ \ker(\d_{\alpha_{1}}) \to \im(\d_{\alpha_{1}}^* | \ker \rho *), 
\ \ a \mapsto b(a) $$
such that the map
$$ \ker(\d_{\alpha_{1}}) \to \A_\delta(P), \ \ a \mapsto \alpha +
\pi^*(a + b(a)) $$
is a local chart for $\A_\delta(P)$.  

To construct the orbifold structure on the quotient, we show that
$\alpha + \ker(\d_{\alpha_{1}}^* \oplus \rho *)$ is a local slice for
the action of $\G_0(P)$.  The assumption $H^0(\d_\alpha^{[\g,\g]}) =
0$ ensures that the local slice conditions
$$\d_{\alpha_{1}}^*((a+b(a))=0, \quad *((a+b(a))|_{\partial X}=0$$
are transverse to $\ker(\d_{\alpha_{1}})$.  To see that the
stabilizers are finite, note that any automorphism of a bundle with
connection is determined by its restriction to a point, and so the
stabilizer embeds into $G$.  The stabilizer is discrete by vanishing
of $H^0$, and so is a finite subgroup of $G$.  Standard arguments
(e.g.\ \cite[Lemma 4.2.4]{do:fo}) show that $M(P) =
\A_\delta(P)/\G_0(P)$ is Hausdorff.  Hence $M(P)$ is a smooth
orbifold.

If all the stabilizers are the central-valued gauge transformations,
the quotient is \label{change9} in fact a manifold.  Consider the
subgroup $\G_0^{\rm central}(P) \subset\G_0(P)$ given by the central
automorphisms of the form $g(p)=p z(p)$ for some $z:P\to Z$.  If all
stabilizers of $\alpha$ are contained in $\G_0^{\rm central}(P)$, then
$M(P)$ is a smooth manifold near $[\alpha]$.  The subgroup
$\G_0^{\rm central}(P)$ acts trivially on $\A_\delta(P)$. On the other
hand, $\G_0^{\rm central}(P)$ is finite since (by connectedness of $X$
and $G$) the map $z\equiv z^{ss}\in Z^{\rm ss}=Z\cap G_0$ is
constant. Hence we can also realize $M(P)$ as the quotient
$$ M(P) = \A_\delta(P)/ (\G_0(P)/\G_0^{\rm central}(P)) .$$
Since slices exist, this quotient has a natural manifold structure on
the locus where $\G_0(P)/\G_0^{\rm central}(P) $ acts freely.

In the case of the unitary group, the condition of the previous
paragraph is automatically satisfied.  Indeed the vanishing of
$H^0(X;\d_\alpha^{[\g,\g]})$ implies that there are no non-central
automorphism of $\alpha$ in $\G_0(P)$.  For suppose that
$g\in\G_0(P)\setminus \G_0^{\rm central}(P)$ is in the stabilizer of
$\alpha$.  Consider the induced connection $\alpha$ on $P(\C^r)$.  The
gauge transformation $g$ induces an automorphism of $P(\C^r)$ whose
action on some fiber has at least two different eigenvalues; by
parallel transport (independent of the choice of path since the
holonomy commutes with $g$) we obtain a splitting
\begin{equation} \label{PC} P(\C^r)=E_1\oplus E_2, \quad \rank(E_1) =
  r_1 > 0, \quad \rank(E_2) = r_2 > 0 .\end{equation} 
Hence there is a one-parameter family
\begin{equation} \label{opf}
\bigl(e^{it/r_1}{\rm Id}_{E_1} \oplus e^{-it/r_2}{\rm
  Id}_{E_2}\bigr)_{t\in\R} \in \Aut(P(\C^r)), \quad t \in \R \end{equation}
of automorphisms of $\alpha$ on $P(\C^r)$. Since $P$ is the frame
bundle of $P(\C^r)$, this would imply a one-parameter family of
non-central automorphisms of $\alpha$ on $P$, contradicting
$H^0(X;d_\alpha^{[\g,\g]})=0$.
\end{proof}

The moduli spaces $M(P)$ can be described in terms of spaces of
representations of the fundamental group, up to a twist which is
determined by the determinant bundle: 

\begin{definition} {\rm (Adjoint moduli spaces)}  Let $\Ad(G) = G/Z$ and $Z^{\ss}
= Z \cap G_0$.  (In case $G=U(r)$ this means
$\Ad(G)=PSU(r)=SU(r)/e^{2\pi i \Z/r}{\rm Id}$ and $Z^{\ss}=e^{2\pi i
  \Z /r} {\rm Id}$.)
Denote by
$$M_{\rm Ad}(X) := \Hom(\pi_1(X),\Ad(G))/\Ad(G) .$$
the moduli space of representations of $\pi_1(X)$ in $\Ad(G)$, up to
conjugacy.  As explained in e.g. Atiyah-Bott \cite{at:mo}, the space
$M_{\rm Ad}(X)$ is the union of the moduli spaces $M_{\rm Ad}(P)$ of
flat connections on $P$ as $P$ ranges over $\Ad(G)$-bundles.
\end{definition} 

\begin{lemma} \label{pides} {\rm (Relation to moduli of flat bundles)}  
Let $P \to X$ be a principal $G$-bundle.  The moduli space $M(P)$ has
the structure of a topological principal
$\Hom(\pi_1(X),Z^{\ss})$-bundle over a component $M_{\rm Ad}(P)$ of
$M_{\rm Ad}(X)$.  In particular $M(P)$ is compact.
\end{lemma}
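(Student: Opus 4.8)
The plan is to reduce the moduli space $M(P)$ of central-curvature, fixed-determinant connections to a moduli space of \emph{flat} connections on the adjoint bundle, and then to account for the discrepancy between two gauge groups. Set $P/Z := P\times_G(G/Z)$, the associated $\Ad(G)$-bundle. Since $\z$ is fixed under $\Ad(G)$, the splitting $\g=[\g,\g]\oplus\z$ induces a bundle splitting of $P(\g)$ whose $[\g,\g]$-summand is the adjoint bundle $\mathrm{ad}(P/Z)$ and whose $\z$-summand is trivial; thus $\A(P)$ is affine over $\Omega^1(X,\mathrm{ad}(P/Z))\oplus\Omega^1(X,\z)$ and the projection $\A(P)\to\A(P/Z)$ forgets only the first summand. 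A connection $\alpha$ has central curvature exactly when the induced connection $\bar\alpha$ on $P/Z$ is flat, and shifting $\alpha$ by $\pi^*a$, $a\in\Omega^1(X,\z)$, leaves $\bar\alpha$ fixed while moving $\det(\alpha)$ by $a$ (Remark~\ref{delta dont matter}). From this I would argue that $\alpha\mapsto\bar\alpha$ restricts to a $\G_0(P)$-equivariant bijection from $\A_\delta(P)$ onto the space of flat connections on $P/Z$: injectivity, because two connections with the same $\bar\alpha$ differ by a $\pi^*a$ which $\det(\alpha)=\delta$ forces to vanish; surjectivity, because any flat connection on $P/Z$ lifts to a connection on $P$ whose determinant is then corrected to $\delta$ by the unique such shift. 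Hence $M(P)$ is the quotient of the space of flat connections on $P/Z$ by $\mathcal H:=\mathrm{im}\bigl(\G_0(P)\to\G(P/Z)\bigr)$, and $\alpha\mapsto\bar\alpha$ descends to a surjection $p\colon M(P)\to M_{\rm Ad}(P/Z)$ onto the component of $M_{\rm Ad}(X)=\Hom(\pi_1(X),\Ad(G))/\Ad(G)$ determined by the bundle $P/Z$ (using Atiyah--Bott \cite{at:mo} for this identification).

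Next I would identify each fiber of $p$ with a torsor over $\Hom(\pi_1(X),Z^{\ss})=H^1(X;Z^{\ss})$. The fiber over $[\bar\alpha]$ is the residual orbit space $\mathcal H\backslash\,(\G(P/Z)\cdot\bar\alpha)$, governed by the coset space $\G(P/Z)/\mathcal H$ and by the stabilizer of $\bar\alpha$. I would compute $\G(P/Z)/\mathcal H\cong\Hom(\pi_1(X),Z^{\ss})$ by tracing the gauge-group sequences attached to the central extension $1\to Z^{\ss}\to Z\to G/G_0\to1$: this is the source of the factor $Z^{\ss}=Z\cap G_0$, since gauge transformations of $P/Z$ that fail to lift to determinant-trivial gauge transformations of $P$ are measured by a class in $H^1(X;Z^{\ss})$, and $H^1(X;Z^{\ss})=\Hom(\pi_1(X),Z^{\ss})$ because $Z^{\ss}$ is finite. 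Concretely this action can be realized by twisting with flat $Z^{\ss}$-bundles, which preserve $\delta$ (as $Z^{\ss}\subset G_0$) and preserve $\bar\alpha$ (as $Z^{\ss}\subset Z$); in the coprime $U(r)$ case this twisting picture is transparent since such a twist carries $P$ to an isomorphic $U(r)$-bundle. Freeness follows because $H^1(X;Z^{\ss})\hookrightarrow H^1(X;Z)$, so a fixed point would force the twisting class to be trivial; transitivity follows once one checks that the stabilizer in $\G(P/Z)$ of a flat connection lies in $\mathcal H$, so that the fiber is exactly one $\Hom(\pi_1(X),Z^{\ss})$-orbit.

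For the topology, over a neighbourhood in $M_{\rm Ad}(P/Z)$ admitting a local slice for the $\G(P/Z)$-action the preimage under $p$ is, topologically, a product with the discrete group $\Hom(\pi_1(X),Z^{\ss})$; hence $p$ is a topological principal $\Hom(\pi_1(X),Z^{\ss})$-bundle, in particular a finite-sheeted covering map. Compactness of $M(P)$ is then immediate: $X$ is compact, so $\pi_1(X)$ is finitely generated and $M_{\rm Ad}(X)=\Hom(\pi_1(X),\Ad(G))/\Ad(G)$ is a continuous image of a closed subset of a finite product of copies of the compact group $\Ad(G)$, hence compact; its closed subset $M_{\rm Ad}(P/Z)$ is then compact, and $M(P)$ is a finite-sheeted covering of a compact space, hence compact.

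I expect the fiber analysis to be the main obstacle: establishing that the $\Hom(\pi_1(X),Z^{\ss})$-action is \emph{simply transitive} on the fibers of $p$, which comes down to the compatible bookkeeping of the gauge groups $\G_0(P)$ and $\G(P/Z)$ — that the kernel $Z^{\ss}$ of their comparison homomorphism acts trivially on connections, that the cokernel is exactly $\Hom(\pi_1(X),Z^{\ss})$, and that the isotropy of a central-curvature connection $\alpha$ matches that of its flat reduction $\bar\alpha$. The reduction step and the topological conclusions are comparatively formal once this is in hand.
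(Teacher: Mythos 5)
Your proposal is correct and follows essentially the same route as the paper's proof: reduce $\A_\delta(P)$ to the space of flat connections on $P/Z$ via the splitting $\g=[\g,\g]\oplus\z$ with the $\z$-part pinned down by $\delta$, identify $\G(P/Z)/\G_0(P)\cong\Hom(\pi_1(X),Z^{\ss})$, check that stabilizers of flat connections lift to $\G_0(P)$ so the fibers are genuine torsors, and deduce compactness from compactness of $\Ad(G)$, finiteness of $Z^{\ss}$, and finite generation of $\pi_1(X)$. The only cosmetic difference is which central extension you trace ($1\to Z^{\ss}\to Z\to G/G_0\to 1$ versus the paper's $1\to Z^{\ss}\to [G,G]\to G/Z\to 1$); both isolate the same obstruction group.
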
  

\begin{proof}
Recall that the space of central curvature connections with fixed
determinant is
$$\A_\delta(P) = \left\{ \alpha \in \Omega^1(P,\g)^G \ \left| 
\ \begin{array}{l} \alpha(\xi_P) = \xi \ \forall \xi \in \g
\\ \det(\alpha) = \delta \\ F_\alpha^{[\g,\g]} =0 \end{array} \right. \right\}
,$$
where $\Om^1(\ldots)^G$ denotes the equivariant forms and $\xi_P \in
\Vect(P)$ is the vector field generated by $\xi \in \g$.  The exact
sequence of groups 
$$1 \to Z \to G \to G/Z =: \Ad(G) \to 1 $$ 
induces a splitting of Lie algebras $\g =\z \oplus [\g,\g]$.  The
$\z$-component of any $\alpha\in\A_\delta(P)$ is uniquely determined
by $\det(\alpha) = \delta$.  Hence the projection to the
$[\g,\g]$-component induces a homeomorphism between $\A_\delta(P)$ and
the space
$$ \left\{ \alpha \in \Omega^1(P,[\g,\g])^G \left| \begin{array}{lll} \alpha(\xi_P) = \xi \ \forall
\xi \in [\g,\g] \\ \alpha(\xi_P) = 0 \ \forall \xi \in \z \\ d\alpha +
\tfrac 12 [\alpha\wedge \alpha] =0 \end{array} \right. \right\} $$ 
of flat, equivariant $[\g,\g]$-forms on $P$ that are horizontal with
respect to $\z$.  The latter forms descend to $\Omega^1(P/Z,[\g,\g])$.
Hence we obtain a homeomorphism between $\A_\delta(P)$ and the space
of flat $\Ad(G)$-connections on $P/Z$,
$$\A_{\rm flat}(P/Z) =  \left\{\alpha \in \Omega^1(P/Z,[\g,\g])^{G/Z} \ \left| \begin{array}{l} \alpha(\xi_{P/Z}) =
\xi \ \forall \xi \in [\g,\g] \\  F_\alpha = 0 \end{array} \right. \right\} .$$
While $\G_0(P)$ acts on both these spaces, the larger group of gauge
transformations $\G(P/Z)$ acts on $\A_{\rm flat}(P/Z)$.  Consider
the short exact sequence 
$$1 \to Z^{\ss} \to [G,G]
\to G/Z \to 1 .$$
There is an isomorphism
$$\G(P/Z)/\G_0(P) \cong \Hom(\pi_1(X),Z^{\ss})$$ 
given by viewing the group of gauge transformations as sections of the
bundle $P \times_G (G/Z)$ resp.  $P \times_G [G,G] $.  On the other
hand, any gauge transformation in $\G(P/Z)$ that fixes a flat
connection in $\A_{\rm flat}(P/Z)$ automatically lifts to an element
of $\G_0(P)$.  Hence the projection
$$M(P) \to M_{\rm Ad}(P) := \A_{\rm flat}(P/Z)/\G(P/Z)$$ 
is a $\Hom(\pi_1(X),Z^{\ss})$-principal bundle.  Finally, $M_{\rm
  Ad}(P)$ is homeomorphic to a component of the representation space
$M_{\rm Ad}(X)$, given by those representations which lift to
connections on $P$.  Compactness follows since $\Ad(G), Z^{\ss}$ are
compact and $\pi_1(X)$ is finitely generated.
\end{proof} 

\subsubsection{Moduli spaces for compositions of bordisms}
\label{subsec mod cob}

In this section we study moduli spaces for bordisms of bundles and the
associated gluing law.  Suppose that $(Y,\phi)$ is a compact oriented
connected bordism between compact oriented connected manifolds
$X_\pm$.  Let $(Q,\psi)$ be a bundle bordism between bundles
$P_\pm \to X_\pm$. That is, $Q$ is a bordism from $P_-$ to $P_+$ with
the structure of a $G$-bundle over $Y$ equipped with an identification
$\psi: \partial Q \to \ol{P}_- \cup P_+$ that is an isomorphism of
$G$-bundles.

\begin{definition} \label{corresp} 
\begin{enumerate} 
\item {\rm (Restriction to the boundary)} Denote the pullback map on
  connections by $\rho: \A(Q) \to \A(P_-) \times \A(P_+)$.  The map
  $\rho $ preserves the central curvature and fixed determinant
  conditions (with respect to appropriate restrictions) and is gauge
  equivariant.  Hence $\rho$ induces a map, also denoted
$$ \rho: {M}(Q) \to {M}(P_-) \times M(P_+) .$$
\item 
{\rm (Correspondences associated to bordisms)} Denote by $L(Q)$ the
image of $M(Q)$ in $M(P_-) \times M(P_+)$,
$$ L(Q) = \rho( M(Q)) \subset M(P_-) \times M(P_+)  .$$  
Thus $L(Q)$ is a topological correspondence from $M(P_-)$ to $M(P_+)$,
that is, a subspace of the product.  In our applications, $L(Q)$ will
be a Lagrangian correspondence between symplectic manifolds $M(P_-),
M(P_+)$.    
\end{enumerate} 
\end{definition} 

\begin{remark} {\rm (Compatibility with maps to representation varieties)}  
 Let $x_\pm \in X_\pm , y \in Y$ be base points.  The inclusion of the
 boundary $X_\pm \to Y$ induces a map $\pi_1(X_\pm,x_\pm) \to
 \pi_1(Y,y)$ depending on a choice of path from $x_\pm$ to $y$ up to
 conjugacy.  The map $\rho$ is compatible with the bundle structure
 described in Lemma \ref{pides} in the sense that the diagram
$$ \begin{diagram} 
\node{M(Q)} \arrow{s} \arrow{e} \node{ {M}(P_-) \times M(P_+)} \arrow{s}  \\
\node{ M_{\rm Ad}(Y)} 
\arrow{e} 
\node{ M_{\rm Ad}(X_-) \times M_{\rm Ad}(X_+) }
\end{diagram} 
 $$
commutes and the top arrow is $\Hom(\pi_1(Y),Z^{\ss})$-equivariant.
The group $\Hom(\pi_1(Y),Z^{\ss})$ acts on the right side of the
diagram via the restriction homomorphism to
$$\Hom(\pi_1(Y),Z^{\ss}) \to \Hom(\pi_1(X_-),Z^{\ss}) \times
\Hom(\pi_1(X_+),Z^{\ss}) $$
Hence $L(Q)$ is contained in the image of the bottom arrow.
\end{remark} 

\begin{proposition}  \label{embed}  {\rm (Sufficient condition for the correspondence
associated to a bordism to be embedded)} Suppose that one of the
  push-forward maps $\pi_1(X_+) \to \pi_1(Y), \ \pi_1(X_-) \to
  \pi_1(Y)$ is surjective.  Then $\rho : M(Q) \to L(Q)$ is a
  bijection.  Furthermore, suppose that
\begin{enumerate} 
\item for any $[\alpha]\in M(P_\pm)$ we have
$$H^0(X_\pm;\d_\alpha^{[\g,\g]}) = \{ 0 \}, \quad
  H^2(X_\pm;\d_\alpha^{[\g,\g]}) = \{ 0 \} .$$
\item for any $[\alpha]\in M(Q)$ we have 
$$H^0(Y;\d_\alpha^{[\g,\g]}) = \{ 0 \}, \quad
  H^2(Y;\d_\alpha^{[\g,\g]}) = \{ 0\} , \quad H^1(Y,\partial Y;
  \d_\alpha^{[\g,\g]}) = \{ 0 \} .$$
\end{enumerate} 
Then $\rho: M(Q) \to L(Q) \subset M(P_-) \times M(P_+)$ is an
embedding.
\end{proposition}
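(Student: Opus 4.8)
The plan is to handle the two claims by different means: bijectivity of $\rho$ uses the principal bundle description of $M(Q)$ and $M(P_\pm)$ from Lemma~\ref{pides}, while the embedding property follows from the linear theory of Proposition~\ref{smoothmod} together with the long exact sequence~\eqref{les}.

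For bijectivity, note that surjectivity of $\rho\colon M(Q)\to L(Q)$ is the definition of $L(Q)$, so only injectivity is at stake; assume, say, that $\pi_1(X_-)\to\pi_1(Y)$ is surjective. First I would record two elementary consequences: (i) the restriction map $M_{\rm Ad}(Y)\to M_{\rm Ad}(X_-)$ is injective, because a homomorphism $\pi_1(Y)\to\Ad(G)$ is determined by its values on the (now full) image of $\pi_1(X_-)$, and a conjugacy between two such restrictions is already a conjugacy between the representations of $\pi_1(Y)$; and (ii) the restriction $\Hom(\pi_1(Y),Z^{\ss})\to\Hom(\pi_1(X_-),Z^{\ss})$ is injective, because $\Hom(-,Z^{\ss})$ converts the surjection $\pi_1(X_-)\to\pi_1(Y)$ into an injection. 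Then, given $\rho([\alpha])=\rho([\beta])$, I would project the equality $[\alpha|_{X_-}]=[\beta|_{X_-}]$ down to $M_{\rm Ad}(X_-)$ and use (i) to conclude that $[\alpha]$ and $[\beta]$ have the same image in $M_{\rm Ad}(Q)$; hence by Lemma~\ref{pides} there is $h\in\Hom(\pi_1(Y),Z^{\ss})$ with $[\beta]=h\cdot[\alpha]$. By the $\Hom(\pi_1(Y),Z^{\ss})$-equivariance of $\rho$ recorded in the Remark on compatibility with maps to representation varieties, $\rho([\alpha])=\rho([\beta])=r(h)\cdot\rho([\alpha])$, where $r$ denotes the restriction homomorphism to $\Hom(\pi_1(X_-),Z^{\ss})\times\Hom(\pi_1(X_+),Z^{\ss})$; since $\Hom(\pi_1(X_-),Z^{\ss})$ acts freely on $M(P_-)$ by Lemma~\ref{pides}, the $X_-$-component of $r(h)$ is trivial, so $h=1$ by (ii), and therefore $[\alpha]=[\beta]$.

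For the embedding claim, assumptions (a) and (b) place us in the hypotheses of Proposition~\ref{smoothmod}, so $M(P_\pm)$ and $M(Q)$ are finite-dimensional orbifolds (manifolds when $G=U(r)$), with $T_{[\alpha]}M(Q)\cong H^1(Y;\d_\alpha^{[\g,\g]})$ and $T_{\rho([\alpha])}\bigl(M(P_-)\times M(P_+)\bigr)\cong H^1(X_-;\d_{\alpha|_{X_-}}^{[\g,\g]})\oplus H^1(X_+;\d_{\alpha|_{X_+}}^{[\g,\g]})$, the latter equal to $H^1(\partial Y;\d_{\alpha|_{\partial Y}}^{[\g,\g]})$ since $\partial Y\cong\ol{X}_-\sqcup X_+$. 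Under these identifications $d\rho_{[\alpha]}$ is induced by restricting $P([\g,\g])$-valued one-forms to the boundary, hence is exactly the restriction map $H^1(Y;\d_\alpha^{[\g,\g]})\to H^1(\partial Y;\d_{\alpha|_{\partial Y}}^{[\g,\g]})$ appearing in~\eqref{les} (legitimate here because $F_\alpha^{[\g,\g]}=0$ makes $P([\g,\g])$ flat, so $(\d_\alpha^{[\g,\g]})^2=0$). Its kernel is the image of $H^1(Y,\partial Y;\d_\alpha^{[\g,\g]})$, which vanishes by (b); thus $d\rho$ is injective everywhere and $\rho$ is an immersion. Finally $M(Q)$ is compact by Lemma~\ref{pides} and $M(P_-)\times M(P_+)$ is Hausdorff, so the continuous injection $\rho$ is a homeomorphism onto $L(Q)$; combined with the immersion property this exhibits $\rho$ as an embedding and $L(Q)$ as a submanifold.

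The main obstacle I anticipate is the bookkeeping in the injectivity step---matching the two injectivity statements (i) and (ii) to the precise form of the equivariance in the Remark, and invoking freeness of the $\Hom(\pi_1(X_-),Z^{\ss})$-action at exactly the right place---together with checking carefully that $d\rho$ really is the cohomological restriction map. Once those points are pinned down, the immersion half is essentially formal given Proposition~\ref{smoothmod} and the sequence~\eqref{les}, and the passage from immersion to embedding is the standard compact-to-Hausdorff argument.
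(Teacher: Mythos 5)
Your proposal is correct and follows essentially the same route as the paper: injectivity of $\rho$ via the principal-bundle description of Lemma~\ref{pides} (injectivity on base and fiber, which you spell out more carefully using equivariance and freeness than the paper's terse remark), and the embedding via smoothness from Proposition~\ref{smoothmod}, identification of $d\rho$ with the cohomological restriction map, injectivity from $H^1(Y,\partial Y;\d_\alpha^{[\g,\g]})=0$ in the long exact sequence~\eqref{les}, and the compact-injective-immersion-is-an-embedding argument. No gaps.
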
 

\begin{proof}   
Suppose that one of $\pi_1(X_\pm) \to \pi_1(Y)$ is surjective.  Then
the maps on the corresponding representation varieties for $\Ad(G)$
and $Z^{\ss}$ in the diagram above are injective.  Since the maps on
the fiber and the base are injective, $\rho$ is injective.  To prove
the second statement, note that the assumptions imply that the moduli
spaces $M(Q),M(P_-), M(P_+)$ are smooth by Lemma \ref{smoothmod}.  The
linearization of restriction $M(Q) \to M(P_-) \times M(P_+)$ is then
restriction on cohomology, 
$$H^1(Y;\d_\alpha^{[\g,\g]}) \to
H^1(X_-;\d_{\alpha_-}^{[\g,\g]}) \times
H^1(X_+;\d_{\alpha_+}^{[\g,\g]}) . $$  
This is injective by the assumption $H^1(Y,\partial Y;
\d_\alpha^{[\g,\g]})= \{ 0 \}$.  Since any injective immersion of a compact
space is an embedding, this completes the proof.
\end{proof}

The notion of composition of bordisms extends naturally to composition
of bundle bordisms.  Let $Q_0 \to Y_0$ be a bundle bordism from $P_0
\to X_0$ to $P_1 \to X_1$ and $Q_1 \to Y_1$ a bundle bordism from $P_1
\to X_1$ to $P_2 \to X_2$, the composition $Q_0 \circ Q_1$ is defined
using equivariant collar neighborhoods of $P_1$ in $Q_0$ and $Q_1$,
and is independent up to bundle isomorphism of the choices.

\begin{remark} {\rm (Correspondences for a composition of bundle
bordisms)} Pullback of connections under $\pi:Q_0 \sqcup Q_1 \to Q_0 \circ Q_1$ induces a map
\begin{equation} \label{fibereq}
\pi^*: M(Q_0 \circ Q_1) \to M(Q_0) \times_{M(P_1)} M(Q_1)
 \end{equation}
where the fiber product is defined using the restriction maps $M(Q_j)
\to M(P_1), j =0,1$.  The map $\pi^*$ fits into a commutative diagram
$$\begin{diagram} \node{ M(Q_0 \circ Q_1)} \arrow{s,l}{\rho}
  \arrow{e,t}{\pi^*} \node{M(Q_0) \times_{M(P_1)} M(Q_1)}
  \arrow{s,r}{{\rm pr}_{M(P_0) \times M(P_2)}\circ (\rho \times \rho)}
  \\ \node{ L(Q_0 \circ Q_1)} \arrow{e,t}{{\rm id}_{M(P_0) \times
      M(P_2)}} \node{L(Q_0) \circ L(Q_1)} \\
\end{diagram}.$$
The bottom map has image in $L(Q_0) \circ L(Q_1)$ by the following
argument:  Let $[\alpha] \in M(Q_0 \circ Q_1)$ 
be a lift of $([\eta_0],[\eta_2]) \in L(Q_0 \circ Q_1) \subset M(P_0)
\times M(P_2)$.  Restricting to $P_1$ gives an element  
$[\eta_1] = [ \alpha |_{P_1} ] \in M(P_1)$ satisfying 
$$ ([\eta_0],[\eta_1])=\rho([\alpha |_{Q_0}]) \in L(Q_0), \quad
([\eta_1],[\eta_2])=\rho([\alpha |_{Q_1}]))\in L(Q_1) .$$
Hence $([\eta_0],[\eta_2])$ lies in
$$ L(Q_0)\circ L(Q_1) =\bigl\{(\zeta_0,\zeta_2)\,|\, \exists
\zeta_1\in M(P_1) : (\zeta_0,\zeta_1)\in L(Q_0), (\zeta_1,\zeta_2)\in
L(Q_1) \bigr\} .$$
This ends the remark. 
\end{remark} 

Proposition~\ref{embed} gives conditions for the left arrow in the
commutative diagram above to be a diffeomorphism; the following
Proposition does the same for the top arrow; and the right arrow will
be discussed in more specific cases in Section~\ref{sec:cobord}.  The
following composition property of the moduli spaces is a nonabelian
version of the Mayer-Vietoris principle.  For simplicity of notation
we restrict to the case of gluing two connected components, although
the same discussion holds e.g.\ for gluing along two boundary
components of the same connected bundle bordism.

\begin{proposition} \label{fiber} {\rm (Conditions for composition of bordisms
to give compositions of correspondences)} Let $Q_0 \to Y_0$ be a
  bundle bordism from $P_0 \to X_0$ to $P_1 \to X_1$ and $Q_1 \to Y_1$
  a bundle bordism from $P_1 \to X_1$ to $P_2 \to X_2$.  Suppose that
  the following conditions hold:
\begin{enumerate}
\item for any $[\alpha]\in M(P_1)$ we have
  $H^0(X_1;\d_\alpha^{[\g,\g]})= \{ 0 \}$, $H^2(X_1;\d_\alpha^{[\g,\g]}) =
  0$;
\item for any $[\alpha]\in M(Q_j)$ for $j=0$ or $j=1$ we have
$$H^0(Y_j;\d_\alpha^{[\g,\g]})= \{ 0 \}, \quad H^2(Y_j;\d_\alpha^{[\g,\g]})
 = \{ 0 \} ,$$
and the restriction map on stabilizers
$$\{g\in\G_0(Q_j)\,|\,
  g^*\alpha=\alpha\} \to \{g\in\G_0(P_1)\,|\,
  g^*\alpha|_{P_1}=\alpha|_{P_1}\}$$ 
is surjective for $j =0$ or $j = 1$;
\item for any $[\alpha]\in M(Q_0 \circ Q_1)$ we have 
$$H^0(Y_0 \circ Y_1;\d_\alpha^{[\g,\g]})= \{ 0 \}, \quad H^2(Y_0 \circ
  Y_1;\d_\alpha^{[\g,\g]})= \{ 0 \} ,$$ 
and the difference of the restriction maps on homology
$$H^1(Y_0;\d_{\alpha}^{[\g,\g]}) \times
  H^1(Y_1;\d_{\alpha}^{[\g,\g]}) \to H^1(X_1;\d_{\alpha
    |X_1}^{[\g,\g]}), (\eta_0,\eta_1) \mapsto
  \eta_0|_{X_1}-\eta_1|_{X_1}$$ 
is surjective;
\item the restriction map $\rho: \G_0(Q_j) \to \G_0(P_1)$ is
  surjective for $j=0$ or $j=1$.
\end{enumerate} 
Then all the spaces in \eqref{fibereq} are smooth and the map $\pi^*$ is
a diffeomorphism.
\end{proposition}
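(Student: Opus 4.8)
The plan is to prove the proposition in four steps: (i) each of $M(P_1)$, $M(Q_0)$, $M(Q_1)$ and $M(Q_0\circ Q_1)$ is a smooth compact manifold; (ii) $\pi^*$ is a well-defined smooth bijection; (iii) the fibre product is then a smooth manifold and $d\pi^*$ is everywhere an isomorphism; and (iv) a bijective local diffeomorphism is a diffeomorphism. For step (i): hypotheses (a)--(c) supply exactly the cohomology vanishing $H^0(\,\cdot\,;\d_\alpha^{[\g,\g]})=H^2(\,\cdot\,;\d_\alpha^{[\g,\g]})=0$ needed to apply Proposition~\ref{smoothmod} (with $G=U(r)$) to $P_1$, $Q_0$, $Q_1$ and $Q_0\circ Q_1$, so each of these moduli spaces is a finite-dimensional smooth manifold, compact by Lemma~\ref{pides}, with tangent space at $[\alpha]$ canonically $H^1(\,\cdot\,;\d_\alpha^{[\g,\g]})$ and with $d\rho$ the corresponding restriction map on these groups.

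For step (ii), recall from the Remark defining \eqref{fibereq} that $\pi^*$ is induced by pulling connections back along the natural projection $Q_0\sqcup Q_1\to Q_0\circ Q_1$; this is affine on connections and gauge equivariant, hence descends to a smooth map on moduli. For surjectivity, given $([\alpha_0],[\alpha_1])$ in the fibre product choose $h\in\G_0(P_1)$ with $h^*(\alpha_0|_{P_1})=\alpha_1|_{P_1}$, lift $h$ through $\rho\colon\G_0(Q_j)\to\G_0(P_1)$ for whichever $j\in\{0,1\}$ is permitted by hypothesis (d), and apply the lift to $\alpha_j$, so that now $\alpha_0$ and $\alpha_1$ agree on $P_1$; after normalizing them in a collar of $P_1$ so that they agree there to infinite order --- possible because in a temporal gauge on a collar $(-\eps,0]\times X_1$ a central-curvature connection has $t$-independent semisimple part --- the pair glues to a central-curvature, fixed-determinant connection $\alpha$ on $Q_0\circ Q_1$ with $\pi^*[\alpha]=([\alpha_0],[\alpha_1])$. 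For injectivity, if $[\alpha],[\alpha']\in M(Q_0\circ Q_1)$ restrict to the same classes over $Q_0$ and $Q_1$, pick $g_j\in\G_0(Q_j)$ with $g_j^*(\alpha|_{Q_j})=\alpha'|_{Q_j}$; then $g_0|_{P_1}$ and $g_1|_{P_1}$ differ by a stabilizer of $\alpha|_{P_1}$, which by the surjectivity of the restriction of stabilizers in (b) may be absorbed into one of the $g_j$, so that $g_0$ and $g_1$ agree on $P_1$ and glue to $g\in\G_0(Q_0\circ Q_1)$; this $g$ is smooth by bootstrapping $\d g=g\alpha'-\alpha g$, and $g^*\alpha=\alpha'$. (Alternatively step (ii) follows from van Kampen's theorem $\pi_1(Y_0\circ Y_1)=\pi_1(Y_0)*_{\pi_1(X_1)}\pi_1(Y_1)$ combined with the description of $M(\,\cdot\,)$ in Lemma~\ref{pides}.)

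For step (iii): since $\pi^*$ is onto, every point of $M(Q_0)\times_{M(P_1)}M(Q_1)$ is $\pi^*[\alpha]$ for some $[\alpha]\in M(Q_0\circ Q_1)$, and at that point hypothesis (c) says precisely that the difference of restriction maps $H^1(Y_0;\d_\alpha^{[\g,\g]})\times H^1(Y_1;\d_\alpha^{[\g,\g]})\to H^1(X_1;\d_{\alpha|X_1}^{[\g,\g]})$ is surjective, i.e.\ that $(\rho_0,\rho_1)\colon M(Q_0)\times M(Q_1)\to M(P_1)\times M(P_1)$ is transverse to the diagonal; hence the fibre product is smooth with tangent space the kernel of that difference map. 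The differential of $\pi^*$ at $[\alpha]$ is the restriction $\eta\mapsto(\eta|_{Y_0},\eta|_{Y_1})$, which lands in that kernel and sits in the Mayer--Vietoris sequence of $Y_0\circ Y_1=Y_0\cup_{X_1}Y_1$ in the cohomology of the flat bundle $(P([\g,\g]),\d_\alpha^{[\g,\g]})$: its kernel is the image of $H^0(X_1;\d_{\alpha|X_1}^{[\g,\g]})=0$, which vanishes by (a), and by exactness its image is the whole kernel of the difference map. Thus $d\pi^*$ is an isomorphism onto the tangent space of the fibre product everywhere, so $\pi^*$ is a bijective local diffeomorphism, hence a diffeomorphism; combined with (i) and the transversality in (iii) this proves the proposition.

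I expect the gluing in step (ii) to be the main obstacle: one must verify that connections and gauge transformations that match along the seam $P_1$ genuinely glue to smooth objects representing points of the ambient moduli space --- handled by the collar/temporal-gauge normalization above --- and keep careful track of where each of the surjectivity hypotheses (b) and (d) enters. The tangent-space computation in step (iii) is the routine Mayer--Vietoris argument.
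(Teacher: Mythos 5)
Your proposal is correct and follows essentially the same route as the paper's proof: injectivity of $\pi^*$ by gluing gauge transformations after absorbing the mismatch on $P_1$ into a stabilizer via hypothesis (b), surjectivity by using (d) to gauge-fix the two connections to agree along $P_1$ (your temporal-gauge collar normalization plays the role of the paper's matching of normal components), smoothness of the fibre product from the transversality in (c), and the Mayer--Vietoris identification of $d\pi^*$ with the isomorphism onto the kernel of the difference map, using $H^0(X_1;\d_\alpha^{[\g,\g]})=0$. The only differences are presentational (the explicit ``bijective local diffeomorphism'' framing and the optional van Kampen remark).
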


\begin{proof} First we check that $\pi^*$ is a bijection.   
To see that $\pi^*$ is injective suppose that $\alpha,\alpha'$ are
connections on $Q_0 \circ Q_1$ mapping to the same element of $M(Q_0)
\times M(Q_1)$ under $\pi^*$.  Then there exists
$$g = (g_0,g_1) \in
\G_0(Q_0) \times \G_0(Q_1), \quad g^*( \pi^* \alpha) = \pi^*
\alpha' .$$  
Since $ (g_1 |_{P_1})^* \alpha |_{P_1} = (g_0 |_{P_1})^* \alpha
|_{P_1}$ the product 
$(g_1 |_{ P_1})^{-1} (g_0 |_{P_1})$ is an
automorphism of $\alpha |_{P_1}$:
\begin{equation} \label{auto} (g_1 |_{ P_1})^{-1} (g_0 |_{P_1}) (\alpha |_{P_1}) = \alpha |_{P_1}
.\end{equation}
By assumption (b), we may assume without loss of generality that
\eqref{auto} extends to an automorphism $h$ of $ \pi^* \alpha
|_{Q_1}$.  Then $g_0 |_{P_1} = h g_1 |_{P_1}$ and so $g_0$ and $hg_1$
glue together to a gauge transformation $\tilde{g}$ of $Q_0 \circ Q_1$
of class $W^{1,p}$ for any $2 < p < \infty$.  Since $\tilde{g}^*
\alpha = \alpha'$, and $\alpha, \alpha'$ are smooth, $\tilde{g}$ is
also smooth.

To see that $\pi^*$ is surjective, let $(\alpha_0,\alpha_1)$ represent
an element of $M(Q_0) \times_{M(P_1)} M(Q_1)$.  The restrictions
$\alpha_0 |_{P_1}, \alpha_1 |_{P_1}$ are gauge equivalent by some $g_1
\in \G(P_1)$.  By assumption (d), we may assume without loss of
generality that $g_1$ extends over $Q_1$, and so after gauge
transformation of $(\alpha_0,\alpha_1)$ that $\alpha_0
|_{P_1}=\alpha_1 |_{P_1}$.  We may also suppose that the determinant
connection $(\delta_0,\delta_1)$ is the pull-back $\pi^* \delta$ of a
smooth determinant connection $\delta$ on $Q_0 \circ Q_1$.  Then,
after another gauge transformation on $Q_1$, we may assume that the
normal components of $\alpha_0$ and $\alpha_1$ agree on $P_1$.  Then
the curvature equation implies that $(\alpha_0,\alpha_1) = \pi^*
\alpha$ for some smooth connection $\alpha$ on $Q_0 \circ Q_1$.

Finally we check that $\pi^*$ is a diffeomorphism.  Let $\alpha$
represent an element of $M(Q_0 \circ Q_1)$ and
$(\alpha_0,\alpha_1)=\pi^*\alpha$.  By vanishing of $H^0$ and $H^2$
for $\alpha_0,\alpha_1, \alpha,$ and $\alpha |_{P_1}$, the moduli
spaces $M(Q_0), M(Q_1), M(Q_0 \circ Q_1),$ and $M(P_1)$ are smooth at
the given points.  The fiber product $M(Q_0) \times_{M(P_1)} M(Q_1)$
is smooth because the surjectivity of the difference map in assumption
(c) implies transversality.  To see that the linearization of $\pi^*$
is an isomorphism, note that the Mayer-Vietoris long exact sequence
gives a short exact sequence in first cohomology (since $ H^0(X_1;
\d_{\alpha |_{P_1}})= \{ 0 \} $ and $H^2(Y_0 \circ Y_1;\d_{\alpha}) = \{ 0 \} $)
$$ 0
 \to H^1(Y_0 \circ Y_1;\d_{\alpha}) 
\to H^1( Y_0; \d_{\alpha_0})
\oplus 
 H^1(Y_1; \d_{\alpha_1}) \to 
H^1(X_1; \d_{\alpha |_{P_1}}) \to 
0  .$$
The induced isomorphism 
$$ H^1(Y_0 \circ Y_1;\d_{\alpha}) = 
H^1( Y_0; \d_{\alpha_0})
\times_{H^1(X_1; \d_{\alpha |_{P_1}})} H^1(Y_1; \d_{\alpha_1})
 $$
is equal to the linearization of \eqref{fibereq}, since the latter is
the tangent space to the fiber product.  It follows that
\eqref{fibereq} is a diffeomorphism.
\end{proof}

\subsection{Moduli spaces for surfaces}
\label{centralsurf} 

In this Section we make the first step towards constructing a
symplectic-valued field theory via Theorem~\ref{cerfthm} by
associating to any bundle any compact connected oriented surface a
moduli space of constant curvature connections.

\begin{remark} 
Let $X$ be a compact, connected, oriented surface without boundary and
$P \to X$ a $G$-bundle.  
\begin{enumerate} 
\item {\rm (Symplectic structure on the affine space of connections)}
  The affine space $\A(P)$ carries a canonical weakly symplectic
  $\G(P)$-invariant two-form $\omega$ that induces the Hodge pairing
  \eqref{Hodge pairing}.  At a point $\alpha \in \A(P)$ the two-form
  is given by
\begin{equation} \label{sympform}
\omega_\alpha: \Omega^1(X,P(\g))\times \Omega^1(X,P(\g)) \to \R,
\ \ (a_1,a_2) \mapsto \int_X \langle a_1 \wedge a_2 \rangle
. \end{equation}
Here $ \langle a_1 \wedge a_2 \rangle $ denotes the form in
$\Omega^{2}(X)$ obtained by combining wedge product and the inner
product on $P(\g)$.  Weakly symplectic means weakly non-degenerate and
closed, where 
\begin{enumerate} 
\item weakly non-degenerate means that for every non-zero $a_1 \in
  \Omega^1(X,P(\g))$ there exists an $a_2 \in \Omega^1(X,P(\g))$ so
  that $\omega_\alpha(a_1,a_2) \neq 0$;
\item closed means $ \d \omega = 0$ where the de Rham operator is
  defined on a Sobolev completion as in \cite[Section 5.3]{lang:man}.

\end{enumerate} 
The action of $\G_0(P)$ is Hamiltonian with moment map
\begin{equation} \label{mmap}
\A(P) \mapsto \Omega^0(X,P([\g,\g]))^\dual, \ \ \ \alpha \mapsto
\int_X \lan F_\alpha^{[\g,\g]} \wedge \cdot \ran  \end{equation}
in the sense that 
\begin{enumerate} 
\item $\omega$ is $\G_0(P)$-invariant, 
\item the map \eqref{mmap} is equivariant with respect to the
  coadjoint action of $\G_0(P)$ on $\Omega^0(X,P([\g,\g]))^\dual$, and
\item the infinitesimal action \eqref{inf action} of $ \xi \in
  \Omega^0(X,P([\g,\g]))$ is the Hamiltonian vector field of the
  function obtained from \eqref{mmap} by pairing: For all $ \eta \in
  \Omega^1(X,P(\g)) $, we have
\begin{equation} \label{mommap}
 \omega (  \d_\alpha \xi , \eta) = 
\int_X \lan \d_\alpha \xi \wedge \eta \ran = 
\int_X \lan  \xi \wedge - \d_\alpha \eta \ran = 
- \ddt \Big|_{t = 0} \int_X \lan
F^{[\g,\g]}_{\alpha + t \pi^* \eta} \wedge \xi \ran .\end{equation} 
\end{enumerate} 
\item {\rm (Moduli of connections as a symplectic quotient
  \cite{at:mo})} The connections with fixed determinant $\det(\alpha)
  = \delta$ form a symplectic submanifold $\det^{-1}(\delta)$ of
  $\A(P)$.  Indeed, the tangent space $\Omega^1(X,P([\g,\g])$ of
  $\det^{-1}(\delta)$ is a symplectic subspace of $\Omega^1(X,P(\g))$.
  The action of $\G_0(P)$ preserves $\det^{-1}(\delta)$.  Hence the
  moduli space of fixed determinant central curvature connections
$$M(P) = \{ \alpha \in \A(P) | \det(\alpha) = \delta,
\ F_\alpha^{[\g,\g]} = 0 \} /\G_0(P) = \A_\delta(P)/\G_0(P) $$
can be viewed as a symplectic quotient.  
\end{enumerate} 
\end{remark} 

Using this point of view we establish the existence of a symplectic
structure on $M(P)$.  Recall, see e.g. \cite{ps:lg}, that the {\em
  dual Coxeter number} $c$ associated to simple $G$ is the positive
integer $c := \lan \rho, \alpha_0 \ran + 1$ where $\alpha_0$ is the
highest root and $\rho$ the half-sum of positive roots.  In particular
for $G = SU(r)$ the dual Coxeter number is $r$.

\begin{proposition} \label{monotone2} {\rm (Symplectic nature of the moduli space)} 
Let $X$ be a compact, connected, oriented surface without boundary and
$P \to X$ a $G$-bundle.  Suppose that for any $[\alpha]\in M(P)$ we
have $H^0(X;\d_\alpha^{[\g,\g]}) = \{ 0 \}$.  Then $M(P)$ is a compact
symplectic orbifold of dimension $(2 g(X)-2)\dim G_0$, where $g(X)$ is
the genus of $X$.  If furthermore $[G,G]$ is simple, then $M(P)$ is
monotone with monotonicity constant $1/2c$.
\end{proposition}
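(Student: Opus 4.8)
The plan is to build on the symplectic-quotient description of $M(P)=\det^{-1}(\delta)\qu\G_0(P)$ from the preceding remark, together with the smoothness result of Proposition~\ref{smoothmod}. First I would establish the orbifold and dimension claims: by hypothesis $H^0(X;\d_\alpha^{[\g,\g]})=0$ for all $[\alpha]\in M(P)$, and on a closed oriented surface Poincar\'e duality \eqref{Hodge duality} gives $H^2(X;\d_\alpha^{[\g,\g]})\cong H^0(X,\partial X;\d_\alpha^{[\g,\g]})=H^0(X;\d_\alpha^{[\g,\g]})=0$ since $\partial X=\emptyset$. Thus the hypotheses of Proposition~\ref{smoothmod} hold, so $M(P)$ is a finite-dimensional orbifold with tangent space $H^1(X;\d_\alpha^{[\g,\g]})$; compactness is Lemma~\ref{pides}. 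The dimension is computed from the Euler characteristic of the elliptic complex $\Omega^\bullet(X,P([\g,\g]))$ with differential $\d_\alpha$: the index equals $\chi(X)\dim[\g,\g]=(2-2g(X))\dim G_0$, and with $H^0=H^2=0$ this forces $\dim H^1(X;\d_\alpha^{[\g,\g]})=(2g(X)-2)\dim G_0$. The symplectic form on $M(P)$ is the one descended from \eqref{sympform} via symplectic reduction, using that $\det^{-1}(\delta)$ is a symplectic submanifold (tangent space $\Omega^1(X,P([\g,\g]))$) on which $0$ is a regular value of the moment map \eqref{mmap} precisely where $H^0$ vanishes; standard Marsden--Weinstein reduction in the weakly symplectic Banach setting (as in Atiyah--Bott \cite{at:mo}) gives a closed nondegenerate $2$-form on the quotient.

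For monotonicity, assuming now that $[\g,\g]$ is simple, I would compute both $c_1(M(P))$ and $[\omega]$ and compare them on $H_2(M(P);\Z)$. The key identity to aim for is that both classes are proportional to the transgression of the basic inner product: recall (Atiyah--Bott, and Narasimhan--Ramanan type arguments) that $[\omega]$ is, up to the normalization built into the basic inner product, the image under the Chern--Weil/transgression map of the generator of $H^4(BG_0;\Z)\cong\Z$, while the determinant-line-bundle computation of $c_1$ of the moduli space introduces the dual Coxeter number $c$ as the ratio between the basic inner product and the one naturally appearing in the index-theoretic (Quillen metric / Bismut--Freed) curvature formula. Concretely, I would: (i) pull everything back to a universal bundle over $X\times M(P)$, apply the families index theorem / Atiyah--Singer to the $\bar\partial$-operator coupled to the adjoint bundle to get $c_1(\det\,\on{ind})$ as a slant product $\int_X \on{ch}$, extract the degree-$2$ part over $M(P)$; (ii) identify the same slant product, for $[\omega]$, from \eqref{sympform} directly; (iii) observe the two differ exactly by the factor $2c$ coming from $\langle\,,\,\rangle_{\text{Killing}} = 2c\,\langle\,,\,\rangle_{\text{basic}}$ together with the standard factor of $2$ in $c_1$ versus the first Chern form. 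This yields $c_1(M(P))=2c\,[\omega]$, i.e.\ $[\omega]=\tfrac{1}{2c}c_1(M(P))$, which is monotonicity with constant $1/2c$.

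I expect the main obstacle to be step (iii): getting the normalization constant $2c$ exactly right, rather than up to an unspecified integer. This requires carefully tracking (a) the difference between the basic inner product normalization used in Remark~\ref{basic} and the trace form natural in the index computation, which on $\su(r)$ differs by the dual Coxeter number $c=r$; (b) the factor of $2$ relating $c_1$ and the curvature $2$-form; and (c) the sign/orientation conventions so that the positive monotonicity constant comes out. A clean way to pin this down is to reduce to the rank-one case (or to a genus-$2$, $SU(2)$ example where $M(P)$ is known explicitly to be $\mathbb{CP}^3$ with its Fubini--Study class) and match constants there, then invoke naturality of the transgression map under the inclusion of a principal $SU(2)$ (or a maximal torus) to propagate the identity to general simple $[\g,\g]$. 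An alternative to the full index-theoretic computation is to cite the classical results of Drezet--Narasimhan and Kumar--Narasimhan identifying the ample generator of $\Pic$ of the moduli space and its relation to $2c$ times the theta class, which directly delivers $c_1=2c[\omega]$; I would likely present the index-theory argument and remark that it also follows from these algebro-geometric computations.
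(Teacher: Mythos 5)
Your proposal is correct and follows essentially the same route as the paper: compactness from Lemma~\ref{pides}, smoothness and the dimension formula from Proposition~\ref{smoothmod} together with Poincar\'e duality and Riemann--Roch, and the symplectic form obtained by descending \eqref{sympform} through the quotient $\A_\delta(P)/\G_0(P)$ (the paper carries out the descent, closedness, and non-degeneracy by hand rather than invoking Marsden--Weinstein as a black box, but the content is the same). For monotonicity the paper does exactly what you offer as your ``alternative'': it cites Drezet--Narasimhan \cite{dr:pi} (see also \cite{la:li}, \cite{ku:pg}) and the symplectic proof in \cite{me:can} for the identity $c_1(M(P)) = 2c\,[\omega_{M(P)}]$, rather than carrying out the families-index computation whose normalization you rightly flag as the delicate point.
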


\begin{proof}  
Lemma~\ref{pides} proves compactness, and the smoothness assertions were proven in Proposition~\ref{smoothmod} since $H^2(X;\d_\alpha^{[\g,\g]})\cong H^0(X;\d_\alpha^{[\g,\g]})$ by Poincar\'e duality.
The dimension formula follows from Riemann-Roch, 
$$\dim T_{[\alpha]} M(P) = \dim H^1(X;\d_\alpha^{[\g,\g]})= (2
g(X)-2)\dim G_0 .$$
To establish the symplectic structure, note that after Sobolev
completion (taking Sobolev class $L_2^s$ with $s>1$) we may take
$\A(P)$ to be an affine Banach space, equipped with the form $\omega$
above.  The action  
$$\G_0(P) \times \A(P) \to \A(P), \quad (\phi, \alpha) \mapsto (\phi^{-1})^*\alpha $$ 
extends to the Sobolev completion of $\G_0(P)$ of class $L_2^{s+1}$ on
connections of class $L_2^s$.  Hence the moment map equation
\eqref{mommap} holds for $ \xi \in \Omega^0(X,P([\g,\g])$ of class
$L_2^{s+1}$ and $ \eta \in \Omega^1(X,P([\g,\g])) $ of class $L_2^s$.
Recall that $\omega$ is translationally invariant, hence closed in the
sense of forms on Banach manifolds as in \cite[Section 5.3]{lang:man}.
Next, we restrict $\omega$ to $\A_\delta(P)\subset\A(P)$.  The latter
is a Banach submanifold for $s\geq 2$ by Proposition~\ref{smoothmod}.
Since the de Rham operator commutes with pull-back,
$$\omega|_{\A_\delta(P)} \in \Omega^2(\A_\delta(P)), \quad \d \omega|_{\A_\delta(P)} = 0 $$ 
is again a closed two-form.  Finally, note that
$\omega|_{\A_\delta(P)}$ is $\G_0(P)$-invariant and vanishes on the
vertical vectors $\d_\alpha \xi, \xi \in \Omega^0(X,P([\g,\g]))$ by
\eqref{mommap}.  Hence it is the pull-back of a two-form on
the quotient $M(P)$.  The two-form form $\omega_{M(P)}$ on the
quotient is closed because the de Rham operator commutes with
pull-back and the map $\A_\delta(P) \to M(P)$ is a submersion.
Non-degeneracy follows from the identity
\begin{equation} \label{nondeg}
 \int_X \langle a \wedge * a
\rangle = \Vert a \Vert_{L_2}^2, \quad \forall [a] \in H^1(X; \d_{\alpha}) \cong
T_{[\alpha]} M(P) \end{equation} 
for a harmonic representative $a$ of the class $[a]$.  The assertion
on monotonicity is a variation on the Drezet-Narasimhan theorem
\cite[Theorem F]{dr:pi}, see also \cite{la:li} and \cite{ku:pg}.  A
symplectic proof is given in \cite{me:can}.  There it is shown, using
the description in Remark \ref{repdes} below, that the first Chern
class $c_1(M(P))$ is $2c$ times the symplectic class
$[\omega_{M(P)}]$.
\end{proof}  

Now restricting to $G=U(r)$ we have the following main result of this
section.

\begin{theorem} \label{MP}  {\rm (Properties of moduli spaces of connections on a surface)} 
Let $P$ be a principal $U(r)$-bundle of degree $d$ coprime to $r$ over
a compact, connected, oriented surface $X$ without boundary.  
\begin{enumerate} 
\item \label{highg}
If $X$ has genus $g(X)\geq 1$, then the moduli space $M(P)$ is a
  nonempty compact symplectic manifold of dimension 
$$ \dim(M(P)) = (2 g(X)-2)(r^2-1) $$
with even Chern numbers
$$ \lan c_1(M(P)), H_2(M(P)) \ran \subset 2 \Z $$
and satisfying the monotonicity relation 
$$ c_1(M(P)) = 2r [\omega_{M(P)}] .$$
\item If $X$ has genus $g(X)= 0$, then $M(P)=\emptyset$.

\item If $X$ has genus $g(X)=1$, then $M(P)=\pt$ is a point.
\end{enumerate} 
Moreover, $M(P)$ is always connected and simply-connected.
\end{theorem} 

\begin{remark} \label{MPr} {\rm (Non-coprime case)} 
Suppose we are in the situation of Theorem~\ref{MP}, except that $r,d$
do have a common divisor. Then for $g(X)\geq 1$ there exist
connections $\alpha\in\A_\delta(P)$ with non-discrete automorphism
groups, i.e.\ $H^0(X;\d_\alpha^{[\g,\g]})\neq 0$.  For $g(X)=0$ we have
$M(P)=\emptyset$ iff $d/r\in\Z$, and otherwise $M(P)$ is a point.
\end{remark}

\begin{proof}[Proof of Theorem \ref{MP} and Remark \ref{MPr}]
The smoothness assertions are standard, see for example \cite{ns:st},
but we give a proof for convenience.  The smooth manifold structure
follows from Proposition~\ref{smoothmod} and vanishing
$H^0(X;\d_\alpha^{[\g,\g]})$ (and hence of $H^2(X;\d_\alpha^{[\g,\g]})$
by Poincar\'e duality) for all $\alpha\in\A_\delta(P)$. To show the
latter, suppose that $\alpha \in \A_\delta(P)$ is fixed by an
infinitesimal gauge transformation $\xi \in \Omega^0(X,P(\g))$ with
trivial determinant, that is, $ \d_\alpha \xi = 0 .$ This
infinitesimal automorphism acts on the bundle $E := P(\C^r)$ with at
least two distinct eigenvalues.  A choice of group of eigenvalues into
two distinct groups defines a splitting of the corresponding vector
bundle
\begin{equation} \label{Esplit} 
E = E_1 \oplus E_2, \quad \rank(E_1) = k, \quad \rank(E_2) = r-k,
\quad 0 < k < r .\end{equation} 
The adjoint bundle $E^\dual\otimes E$ contains the direct summands
$E_j^\dual\otimes E_j$ and the identity section ${\rm Id}_{E}={\rm
  Id}_{E_1}\oplus{\rm Id}_{E_2}$ is the sum of identities in the
summands.  The curvature also splits $F_\alpha =
F_{\alpha|_{E_1}}\oplus F_{\alpha|_{E_2}}$ and is a multiple of the
identity $F_\alpha={\rm Id}_E \eta$ for some
$\eta\in\Omega^2(X)$. This implies $F_{\alpha|_{E_1}}= {\rm Id}_{E_1}
\eta$.  Hence the first Chern number of $E_1$ equals
$$
\langle c_1(E_1), [X] \rangle 
= \frac {{\rm rk}(E_1)}{r} \langle c_1(E) , [X] \rangle = 
\frac {kd}{r} .
$$ 
Since the first Chern number of $E_1$ should be an integer and
$k<r$, this implies that $r$ and $d$ cannot be coprime.  That is, if
$r,d$ are coprime then no such splitting and hence no infinitesimal
automorphism can occur.  

Now the symplectic structure and monotonicity follow from
Proposition~\ref{monotone2} together with dual Coxeter number $c=r$
for $U(r)$.  The assertion on the minimal Chern number is proved in
\cite{dr:pi}.  The claims on (non)emptiness, connectedness and
simply-connectedness follow from the stratification of $\A(P)$
established in \cite{at:mo}, see especially \cite[Theorem 9.12]{at:mo}
with $G_0=SU(r)$ simply-connected.  If $X\cong T^2$ has genus $g= 1$,
then $\dim M(P)=0$ by the dimension formula and connectedness implies
that $M(P)$ is a point.  See also \cite{sc:mo,bo:al} where moduli
spaces of bundles on elliptic curves are investigated more
extensively.

Alternatively, the (non)emptiness for general $r,d$ can be seen from
the description in Remark~\ref{repdes} below:
\begin{itemize} 
\item If $X$ has genus zero, then $d/r\notin\Z$ implies
  $M(P)=\emptyset$.  Indeed the product of commutators in \ref{repdes}
  must be $z = \exp(2\pi i d/r){\rm Id}_{U(r)}\neq{\rm Id}_{U(r)}$
  which is impossible. On the other hand, $M(P)$ is a single point for
  $d/r\in\Z$.  
\item If $X$ has positive genus $g\geq 1$, then $M(P)$ is nonempty by
  e.g.\ Got\^o's theorem \cite{go:cm}.
\end{itemize} 

Finally, if $r = r' m$ and $d = d'm$ have a common factor $m>1$ and
$g(X)\geq 1$, then by the above the moduli space $M(P')$ for a
$U(r')$-bundle $P'$ of degree $d'$ is nonempty. Any representative of
$[\alpha']\in M(P')$ induces a connection on the corresponding vector
bundle $E'=P'(\C^{r'})$. Then $\alpha'$ also induces a central
curvature connection on $E=\oplus_{i=1}^m E'$.  From this one obtains
a connection on the corresponding principal $U(r)$-bundle, which is
isomorphic to $P$.  The splitting of the bundle implies that the
connection has a non-discrete automorphism group.
\end{proof} 

\begin{remark} {\rm (Holomorphic description of the moduli spaces)} 
The moduli spaces $M(P)$ have a holomorphic description, for $G=U(r)$
due to a famous theorem of Narasimhan-Seshadri \cite{ns:st},
generalized to arbitrary groups in Ramanathan's thesis \cite{ra:th},
\cite{ra:th2}.  However, we never use the holomorphic description.
\end{remark} 

\begin{lemma} \label{Pdontmatter}  {\rm (Symplectomorphisms of moduli spaces
induced by bundle isomorphisms)}  
Let $r,d$ be coprime integers.  
\begin{enumerate}
\item For any two $U(r)$-bundles $P_0 \to X_0, P_1 \to X_1$ of degree
  $d$, any bundle isomorphism $\psi : P_0 \to P_1$ covering an
  orientation-preserving diffeomorphism from $X_0$ to $X_1$ induces a
  symplectomorphism $\psi^*: M(P_1) \to M(P_0)$ given by pull-back of
  representatives.
\item Any two such bundle isomorphisms
from $P_0$ to $P_1$ covering the same diffeomorphism from $X_0$ to
$X_1$ induce the same symplectomorphism.  
\item If $P_0 \to X, P_1 \to X$ are $U(r)$-bundles of degree $d$ over
  the same surface, then the moduli spaces $M(P_0), M(P_1)$ are
  canonically symplectomorphic.
\end{enumerate} 
\end{lemma}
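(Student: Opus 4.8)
The plan is to prove all three parts by a direct analysis of pull-back of connections, using Theorem~\ref{MP} so that the moduli spaces are smooth symplectic manifolds, and Remark~\ref{delta dont matter} to dispose of the choice of determinant connection.

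For (a), given $\psi: P_0\to P_1$ covering an orientation-preserving diffeomorphism $f: X_0\to X_1$, pull-back of connection one-forms gives a bijection $\psi^*: \A(P_1)\to\A(P_0)$. Since $\psi$ is a $U(r)$-equivariant bundle map it is compatible with associated bundles and covariant derivatives, so $F_{\psi^*\alpha}=\psi^*F_\alpha$ and $\det(\psi^*\alpha)=\det(\psi)^*\det(\alpha)$, and hence $\psi^*$ restricts to a bijection $\A_{\delta_1}(P_1)\to\A_{\det(\psi)^*\delta_1}(P_0)$ between central-curvature connections of fixed determinant. Conjugation by $\psi$ is a group isomorphism $\G(P_1)\to\G(P_0)$ carrying $\G_0(P_1)$ onto $\G_0(P_0)$ and intertwining the two gauge actions with $\psi^*$, so $\psi^*$ descends to a bijection $M_{\delta_1}(P_1)\to M_{\det(\psi)^*\delta_1}(P_0)$; composing with the canonical identifications of Remark~\ref{delta dont matter} yields $\psi^*: M(P_1)\to M(P_0)$, whose inverse is $(\psi^{-1})^*$. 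I would then observe that this is a diffeomorphism because $\psi^*$ carries the local slice charts of Proposition~\ref{smoothmod} to local slice charts, and a symplectomorphism because the form on $M(P)$ is induced by $\om_\alpha(a_1,a_2)=\int_X\langle a_1\wedge a_2\rangle$ of \eqref{sympform} and, $f$ being orientation-preserving, $\int_{X_0}\langle\psi^*a_1\wedge\psi^*a_2\rangle=\int_{X_1}\langle a_1\wedge a_2\rangle$.

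For (b), if $\psi,\psi': P_0\to P_1$ cover the same $f$, then $h:=(\psi')^{-1}\circ\psi$ is a gauge transformation of $P_0$ and $\psi^*\alpha=h^*((\psi')^*\alpha)$, so the two maps $M(P_1)\to M(P_0)$ produced by (a) differ by the self-map of $M(P_0)$ induced, again via Remark~\ref{delta dont matter}, by $h\in\G(P_0)$. It therefore suffices to show that every gauge transformation of $P_0$ induces the identity on $M(P_0)$. For $h\in\G_0(P_0)$ this is immediate from the definition of $M(P_0)$ as the quotient by $\G_0(P_0)$. For general $h$ I would argue via the representation-variety description of Lemma~\ref{pides}: a gauge transformation alters the holonomy of any connection around a based loop only by an overall conjugation, and combined with the bookkeeping of Remark~\ref{delta dont matter} this shows the self-map is trivial — it is the identity already on the $\G_0(P_0)$-part, and on the remaining (essentially central) part it is realized by a shift $\alpha\mapsto\alpha+\pi^*a$ of the kind that the identification of Remark~\ref{delta dont matter} is built to undo (here one uses that $\G_0(P_0)$ is connected, $SU(r)$ being simply connected). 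Hence $\psi^*=(\psi')^*$.

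For (c), by the classification of principal $U(r)$-bundles over a compact oriented surface any two bundles of the same degree are isomorphic, so there is a bundle isomorphism $\psi: P_0\to P_1$ covering the identity of $X$; part (a) makes $\psi^*: M(P_1)\to M(P_0)$ a symplectomorphism and part (b) shows it does not depend on the choice of such $\psi$, so it is canonical, and compatibility under composition is immediate from $(\psi_1\circ\psi_2)^*=\psi_2^*\circ\psi_1^*$. The step I expect to be the main obstacle is the gauge-invariance claim inside (b): verifying that the whole gauge group $\G(P_0)$, and not merely the fixed-determinant subgroup, acts trivially on $M(P_0)$ once the determinant-shift identifications of Remark~\ref{delta dont matter} are taken into account — this requires tracking carefully how those shifts interact with the holonomy/representation-variety picture of Lemma~\ref{pides}.
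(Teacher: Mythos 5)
Parts (a) and (c) of your argument coincide with the paper's proof: pull-back of connections by a bundle isomorphism is gauge-equivariant and intertwines the curvature and determinant conditions, it acts symplectically on $\Omega^1(X,P(\g))$ precisely because the covered diffeomorphism is orientation-preserving, and existence of an isomorphism covering $\Id_X$ in (c) follows from the classification of $U(r)$-bundles over a surface by degree. The paper is terser about smoothness of the induced map, but your slice argument is fine.

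The crux is the step you yourself flag in (b): that every $h\in\G(P_0)$, and not merely $h\in\G_0(P_0)$, induces the identity on $M(P_0)$ once the determinant-shift identifications of Remark~\ref{delta dont matter} are applied. The paper's proof simply asserts this (``any gauge transformation of $P_1$ induces the identity on $M(P_1)$''), so you have correctly located where all the content lies; but your proposed justification does not close it. Your argument implicitly factors $h$ as $h_0\cdot(z\,\Id)$ with $h_0\in\G_0(P_0)$ and $z:X_0\to U(1)$ central, so that the central factor is exactly a shift $\alpha\mapsto\alpha+\pi^*(z^{-1}\d z\,\Id)$ which Remark~\ref{delta dont matter} undoes. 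Such a factorization requires $\det(h)$ to admit an $r$-th root, i.e.\ $[\det(h)]\in H^1(X_0,\Z)$ to be divisible by $r$; connectedness of $\G_0(P_0)$ does not help, since $\det:\G(P_0)\to \Map(X_0,U(1))$ hits every homotopy class, and every $h$ does arise as $(\psi')^{-1}\circ\psi$ for some choice of $\psi'$. When $[\det(h)]$ is not divisible by $r$, the composite of $h^*$ with the determinant shift acts on the $[\g,\g]$-part of a connection exactly as the induced $\bar h\in\G(P_0/Z)$, whose class in $\G(P_0/Z)/\G_0(P_0)\cong\Hom(\pi_1(X_0),Z^{\ss})$ from Lemma~\ref{pides} is $[\det(h)]\bmod r\neq 0$; the resulting self-map of $M(P_0)$ is therefore the corresponding deck transformation of the covering $M(P_0)\to M_{\rm Ad}(P_0)$ --- in the holonomy picture of Remark~\ref{repdes}, multiplication of the generators by $r$-th roots of unity --- and this is not manifestly (for genus $\ge 2$, not actually) the identity. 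So the gauge-invariance claim in (b) needs a genuinely different argument, or the ``canonical'' identification must be understood only up to this finite deck group; your sketch, like the paper's one-line assertion, leaves this unresolved.
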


\begin{proof} For manifolds of dimension at most three, bundles are
  classified up to isomorphism by their first Chern class.  For
  connected surfaces, the Chern class is determined by the degree.
  Hence if $\pi_j: P_j \to X_j, j =0,1$ are bundles of the same degree
  then there exists an isomorphism $\phi: P_0 \to P_1$ with $\pi_1
  \circ \phi = \pi_0$.  This induces a map from $\A(P_1)$ to
  $\A(P_0)$, given on the level of tangent spaces by the pull-back of
  basic forms $\phi^* : \Omega^1(P_1,\g)_{\on{basic}} \to
  \Omega^1(P_0,\g)_{\on{basic}} $.  This pull-back acts
  symplectically: for $a,b \in \Omega^1(P_1,\g)_{\on{basic}}$ we have
$$ \int_{X_1} \pi_{1,*} \lan a \wedge b \ran = \int_{X_0} \pi_{0,*}
  \lan \phi^* a \wedge \phi^* b \ran $$
since $\phi$ covers an orientation-preserving diffeomorphism from
$X_0$ to $X_1$.  Moreover pull-back by $\phi$ maps $\A_\delta(P_1)$ to
$\A_{\phi^* \delta}(P_0)$ and is equivariant with respect to the gauge
actions of $\G_0(P_1)$ and $\G_0(P_0)=\phi^*\G_0(P_1)$.  Hence
$\phi^*$ descends to a symplectomorphism $M(P_1) \to M(P_0)$.  Any two
such isomorphisms covering the same diffeomorphism differ by a gauge
transformation of $P_1$.  Since any gauge transformation of $P_1$
induces the identity on $M(P_1)$ and $M(P_1)$ is independent of the
choice of determinant connection (see Remark~\ref{delta dont matter}),
the identification $M(P_1) \to M(P_0)$ depends only on the choice of
diffeomorphism from $X_0$ to $X_1$.
\end{proof}  

\begin{remark} \label{repdes} {\rm (Moduli spaces as representations of fundamental group of the punctured
surface)} Atiyah-Bott \cite{at:mo} provide a description of $M(P)$ in
  terms of representations, which makes Lemma~\ref{pides} more precise
  in the case of bundles over surfaces.  In that case, $M(P)$ can be
  identified with the moduli space of $G_0$-representations of the
  fundamental group of the punctured surface $X - \{ x \}$, whose
  value on a small loop around $x$ is equal to a certain central
  element $z \in G_0$.  Here $z$ is determined by the choice of the
  bundle $P \to X$.  For example in the case that $G = U(r)$ and $\lan
  c_1(P),[X] \ran = d$ we have $z =\exp(2\pi i d/r){\rm Id}_{U(r)}$.
  More explicitly, the fundamental group of $X - \{ x \}$ can be
  described as the free group on $2g$ generators
  $\alpha_1,\beta_1,\ldots,\alpha_g,\beta_g$ with the product of
  commutators $ \prod_{j=1}^g [\alpha_j,\beta_j] $ being the class of
  the loop around the base point $x$.  Thus we have a homeomorphism
\begin{equation} \label{reps} {M}(P) \cong \biggl\{
    (a_1,b_1,\ldots,a_g,b_g) \in G_0^{2g} \ \ \bigg| \ \ \prod_{j=1}^g [a_j,b_j]
  = z \biggr\}/ G_0 .\end{equation}
In particular, for $X\cong S^2$ we have 
$$ M(P)  =
\begin{cases}  \emptyset & z \neq{\rm Id}_G \\ 
               \on{pt}  & z={\rm Id}_G . \end{cases}$$  
The map \eqref{reps} is given by choosing a determinant connection
whose curvature is concentrated near a base point, and mapping each
$[\alpha]$ to the representation of $\pi_1(X - \{ x \})$ given by the
holonomies.
\end{remark}  

\subsection{Moduli spaces for three-dimensional elementary bordisms}
\label{sec:cobord}

In this Section we make the second step towards constructing a
symplectic-valued field theory in dimension $2+1$ via
Theorem~\ref{cerfthm}, by defining a partial functor on elementary
bordisms.  

\begin{definition} {\rm (Correspondences for bordisms)}  
Fix $G = U(r)$ for some positive integer $r$ and let $P_\pm \to X_\pm$
be principal $U(r)$-bundles of the same degree $d$ coprime to $r$ over
surfaces $X_\pm$ as in Section~\ref{centralsurf}.  Let $(Q,\psi)$ be a
bundle bordism from $P_- \to X_-$ to $P_+ \to X_+$ as in
Section~\ref{subsec mod cob}; in particular $Y$ is a $3$-dimensional
bordism from $X_-$ to $X_+$.  The image of $M(Q)$ under pullback as in
Definition \ref{corresp} is
$$ L(Q) := \rho(M(Q)) \subset M(P_-)^- \times M(P_+) . $$
\end{definition} 

\begin{remark} {\rm (Low genus cases)}  
\begin{enumerate} 
\item If either $X_+$ or $X_-$ has genus $0$, then the corresponding
  moduli space $M(P_\pm)$ is empty by Theorem \ref{MP}, and therefore
  so is $L(Q)$.
\item 
 If either $X_+$ or $X_- = T^2$ has genus $1$, then the corresponding
 moduli space $M(P_\pm)$ is a point.  Furthermore, by a special case
 of the following Theorem, $L(Q)$ is a Lagrangian correspondence from
 a point to $M(P_\mp)$ and can also be viewed as Lagrangian
 submanifold of $M(P_\mp)$.
\end{enumerate}
\end{remark}

\begin{theorem} \label{thm lagembed} {\rm (Lagrangians for elementary bordisms)}  
If $Y$ is a compression body as in Definition~\ref{compbody} and $Q
\to Y$ is a principal $G$-bundle then $ \rho:M(Q) \to L(Q) \subset
M(P_-)^- \times M(P_+)$ is a Lagrangian embedding.  If moreover $Y$ is
an elementary bordism as in Definition \ref{Morse datum}
\eqref{elementary}, then $L(Q)$ is simply-connected and spin.
\end{theorem}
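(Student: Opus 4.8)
The plan is to dispatch the two assertions separately: the Lagrangian embedding will come from Proposition~\ref{embed} together with Corollary~\ref{c maxiso}, while the topology of $L(Q)\cong M(Q)$ will come from the fibered description of $M(Q)$ over a surface moduli space.

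\emph{Lagrangian embedding.} A compression body $Y$ is, by definition, built from one of its boundary components by attaching handles all of a single index; inverting the Morse function it is built from the other boundary component by handles of the complementary index, and one of these two indices is $2$. Attaching $2$‑handles induces $\pi_1$‑surjections (van Kampen), so one of $\pi_1(X_\pm)\to\pi_1(Y)$ is surjective, and the first part of Proposition~\ref{embed} makes $\rho:M(Q)\to L(Q)$ a bijection. Let $X_\star\in\{X_-,X_+\}$ be the boundary component from which $Y$ is obtained by attaching only $1$‑handles; retracting those handles onto their cores exhibits $Y$ as homotopy equivalent to the space obtained from $X_\star$ by attaching $1$‑cells. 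For the flat bundle $P([\g,\g])$ (flat since $F_\alpha^{[\g,\g]}=0$) this gives, for every $[\alpha]\in M(Q)$,
$$H^0(Y;\d_\alpha^{[\g,\g]})\cong H^0(X_\star;\d_\alpha^{[\g,\g]}|_{X_\star}),\qquad H^2(Y;\d_\alpha^{[\g,\g]})\cong H^2(X_\star;\d_\alpha^{[\g,\g]}|_{X_\star}),$$
since attaching $1$‑cells changes neither $H^0$ nor $H^{\ge2}$. Both vanish: $H^0$ by the coprimality computation in the proof of Theorem~\ref{MP} (if a boundary surface has genus $0$ then the relevant moduli space, hence $L(Q)$, is empty and there is nothing to prove), and then $H^2$ by Poincar\'e duality on the closed surface $X_\star$. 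By the duality \eqref{Hodge duality} applied to the semisimple part, $H^1(Y,\partial Y;\d_\alpha^{[\g,\g]})\cong H^2(Y;\d_\alpha^{[\g,\g]})=0$ as well. Conditions (a),(b) of Proposition~\ref{embed} then hold, so $\rho$ is an embedding. It is Lagrangian: by Proposition~\ref{smoothmod} the tangent space of $M(Q)$ at $[\alpha]$ is $H^1(Y;\d_\alpha^{[\g,\g]})$, and $d\rho$ is restriction to $H^1(\partial Y;\d_{\alpha|\partial Y}^{[\g,\g]})=T_{[\alpha_-]}M(P_-)^-\oplus T_{[\alpha_+]}M(P_+)$; Corollary~\ref{c maxiso} applied to the semisimple part (legitimate since $(\d_\alpha^{[\g,\g]})^2=0$) shows this image is Lagrangian and, using $H^1(Y,\partial Y;\d_\alpha^{[\g,\g]})=0$, that $d\rho$ is injective. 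Compactness of $M(Q)$ from Lemma~\ref{pides} upgrades the injective immersion to an embedding.

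\emph{Simply-connectedness and spin for elementary $Y$.} If $Y$ is cylindrical then $L(Q)=\Delta_{M(P)}\cong M(P)$, so Theorem~\ref{MP} gives simply-connectedness, and spin-ness holds because $w_2(M(P))$ is the mod $2$ reduction of $c_1(M(P))=2r[\omega_{M(P)}]$, hence zero. If $Y$ carries a single critical point then, up to diffeomorphism, it is obtained from $X_\star$ by one $1$‑handle, so $\pi_1(Y)=\pi_1(X_\star)*\Z$, and the description of $M(Q)$ via (twisted) holonomy representations identifies it with an $\on{SU}(r)$‑bundle over $M(P_\star)$: the $\pi_1(X_\star)$‑part of a representation ranges over the representation variety covering $M(P_\star)$, while the free generator contributes an arbitrary holonomy in $\on{SU}(r)$ (the determinant being trivial for a convenient reference connection, cf.\ Remark~\ref{delta dont matter}). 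Since $\on{SU}(r)$ and $M(P_\star)$ are simply-connected (Theorem~\ref{MP}), the homotopy exact sequence of this fibration gives $\pi_1(M(Q))=1$, hence $\pi_1(L(Q))=1$ and $L(Q)$ is orientable. For spin-ness, simple-connectedness reduces us to $w_2(M(Q))=0$; the vertical tangent bundle of $M(Q)\to M(P_\star)$ is the pullback of a bundle $\mathfrak g$ of Lie algebras $\su(r)$, so $w_2(M(Q))=p^*\bigl(w_2(M(P_\star))+w_2(\mathfrak g)\bigr)=p^*w_2(\mathfrak g)$. The index‑$2$ case is symmetric (invert the Morse function).

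\emph{The main obstacle} is precisely the vanishing $w_2(\mathfrak g)=0$; the Lagrangian embedding and the simply-connectedness cost essentially nothing by comparison. Coprimality of $r$ and $d$ provides a Poincar\'e bundle on $M(P_\star)\times X_\star$; its restriction $\mathcal E$ to $M(P_\star)\times\{\mathrm{pt}\}$ is a rank‑$r$ lift of the $\on{PU}(r)$‑bundle, so $\mathfrak g=\mathfrak{su}(\mathcal E)$ and a splitting‑principle computation gives $w_2(\mathfrak{su}(\mathcal E))=(r-1)c_1(\mathcal E)\bmod 2$. This vanishes outright for $r$ odd; for $r$ even (so $d$ odd) it remains to show that the Poincar\'e bundle can be normalized with $c_1(\mathcal E)$ even, which is the one genuinely non-formal characteristic-class input on $M(P_\star)$.
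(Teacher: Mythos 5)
Your treatment of the Lagrangian embedding and of simple-connectedness is essentially correct and close to the paper's: surjectivity of one of $\pi_1(X_\pm)\to\pi_1(Y)$ plus Proposition~\ref{embed} and Corollary~\ref{c maxiso} is exactly the route taken there, and your use of Poincar\'e--Lefschetz duality to get $H^1(Y,\partial Y;\d_\alpha^{[\g,\g]})\cong H^2(Y;\d_\alpha^{[\g,\g]})=0$ is a clean substitute for the Mayer--Vietoris argument of Lemma~\ref{relative}. One small imprecision: for twisted coefficients, attaching $1$-cells to $X_\star$ gives an \emph{injection} $H^0(Y)\hookrightarrow H^0(X_\star)$ and a \emph{surjection} $H^2(X_\star)\twoheadrightarrow H^2(Y)$, not isomorphisms (a $1$-cell with nontrivial holonomy kills invariants); but only the vanishing direction is needed, so the conclusion stands. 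The identification of $M(Q)$ as an $\on{SU}(r)$-bundle over $M(P_\star)$ and the homotopy exact sequence argument also match Lemma~\ref{holL} and the paper's proof.

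The genuine gap is the one you flag yourself: spin-ness of the vertical bundle when $r$ is even. Your reduction is correct --- the fiber $\on{SU}(r)$ is $2$-connected, so $\pi^*$ is an isomorphism on $H^2(\cdot;\Z_2)$ and the theorem is \emph{equivalent} to $w_2$ of the vertical bundle vanishing --- and the splitting-principle formula $w_2(\su(\cE))\equiv(r-1)c_1(\cE)$ is right. But for $r$ even the remaining step cannot be fixed by ``normalizing'' the Poincar\'e bundle: twisting $\cE$ by a line bundle from the base changes $c_1(\cE)$ by $r\,c_1(L)$, which is even, so the class $c_1(\cE)\bmod 2\in H^2(M(P_\star);\Z_2)$ is an invariant independent of normalization, and you would have to actually compute it. The paper avoids this computation entirely by arguing representation-theoretically: by Lemma~\ref{holL} the vertical bundle is the bundle associated to the holonomy description with fiber $\g_0$ and structure group $G_0=\on{SU}(r)$, and since $G_0$ is simply connected the homomorphism $\Ad:G_0\to\on{SO}(\g_0)$ lifts to $\on{Spin}(\g_0)$, so the associated bundle carries a spin structure; combined with the square root of the canonical bundle of $M(P_\star)$ from Proposition~\ref{monotone2}, this gives spin-ness with no characteristic-class input. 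If you want to pursue your route, you must either justify that the structure group can indeed be reduced to $G_0$ (rather than $PU(r)$, which is what the conjugation action naively provides) or establish the parity of $c_1(\cE)|_{M(P_\star)\times\{\pt\}}$ directly; as written, the $r$ even case is unproved.
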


\begin{proof} 
By Theorem~\ref{MP}, $H^0(X_\pm;\d_\alpha^{[\g,\g]}) \cong
H^2(X_\pm;\d_\alpha^{[\g,\g]}) = \{ 0 \}$ for any $[\alpha]\in M(P_\pm)$.
Lemma \ref{relative}~(a) below shows that the assumptions of
Proposition \ref{embed} are satisfied so that $\rho$ is an embedding.
The image is Lagrangian since the image of $T_{[\alpha]} {M}(Q)\cong
H^1(Y;\d_\alpha) $ in $T_{([\alpha |_{P_-}],[ \alpha|_{P_+}])}
({M}(P_-)^- \times M(P_+)) \cong H^1(\partial Y;\d_{\alpha|_{\partial
    Y}})$ is a Lagrangian subspace by Corollary \ref{c maxiso}.

Next, suppose that $Y$ is elementary and without loss of generality
$X_-$ is the surface of lower genus. By Lemma \ref{holL} below, $L(Q)$
is a principal $G_0$-bundle over the moduli space $M(P_-)$.  The space \label{change10}
$M(P_-)$ is simply-connected by Theorem~\ref{MP}.  Since the fiber and
base are simply-connected, so is $L(Q)$.  Now let $\pi$ denote the
projection of $L(Q)$ onto $M(P_-)$.  By Lemma \ref{holL} the tangent
bundle of $L(Q)$ is the sum of the pull-back of $TM (P_-)$ with a
vector bundle with fiber $\g_0$ and structure group $G_0$.  By Lemma
\ref{monotone2} the canonical bundle of $M(P_-)$ admits a square root.
Since $G_0$ is simply-connected, the representation $\g_0$ is spin and
hence so is the vertical part of $TL(Q)$.  Hence $L(Q)$ is also spin.
\end{proof}

\begin{lemma} \label{relative}  
Suppose that $Y$ is a compression body as in Definition~\ref{compbody}
between surfaces $X_-,X_+$ of positive genus, and the genus of $X_+$
is larger than or equal to the genus of $X_-$. Then the following
holds.
\begin{enumerate} 
\item[(a)] The map of fundamental groups $\pi_1( X_+ ) \to \pi_1(Y)$ is a
  surjection and the map $\pi_1( X_- ) \to \pi_1(Y)$ is an injection.
\item[(b1)] For any connection $\alpha \in \A_\delta(Q)$, the map
  induced by restriction $ H^0(Y,\d_\alpha^{[\g,\g]}) \to
  H^0(X_+,\d_{\alpha |_{P_+}}^{[\g,\g]})$ is an isomorphism and the
  map $ H^0(Y,\d_\alpha^{[\g,\g]}) \to H^0(X_-,\d_{\alpha
    |_{P_-}}^{[\g,\g]})$ is an injection;
\item[(b2)] the map
  $H^1(Y,\d_\alpha^{[\g,\g]}) \to H^1(X_+,\d_{\alpha
    |_{P_+}}^{[\g,\g]})$ is injective and the map $
  H^1(Y,\d_\alpha^{[\g,\g]}) \to H^1(X_-,\d_{\alpha
    |_{P_-}}^{[\g,\g]})$ is surjective;
\item[(b3)] the map $ H^2(Y,\d_\alpha^{[\g,\g]}) \to
  H^2(X_-,\d_{\alpha |_{P_-}}^{[\g,\g]} )$ is an isomorphism and the
  map $ H^2(Y,\d_\alpha^{[\g,\g]}) \to H^2(X_+,\d_{\alpha
    |_{P_+}}^{[\g,\g]} )$ is a surjection.
\end{enumerate}
\end{lemma}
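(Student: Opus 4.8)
The plan is to reduce Lemma~\ref{relative} to the handle structure of a compression body together with the long exact sequences of the pairs $(Y,X_\pm)$ in cohomology with local coefficients. The first point to establish is the purely topological claim that, since $g(X_+)\ge g(X_-)$, the compression body $Y$ is obtained from $X_-\times[0,1]$ by attaching $g:=g(X_+)-g(X_-)\ge 0$ handles of index $1$ along $X_-\times\{1\}$, and equivalently from $X_+\times[0,1]$ by attaching $g$ handles of index $2$ along $X_+\times\{1\}$. Indeed, a compression body has all critical points of one fixed index; were that index $2$ on the $X_-$-side, each handle would perform surgery on a curve in a level set, which (since the level sets remain connected) must be non-separating and hence strictly lowers the genus, contradicting $g(X_+)\ge g(X_-)$ unless there are no handles at all. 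Granting this, $Y$ is homotopy equivalent to the wedge of $X_-$ with $g$ circles, so $\pi_1(X_-)\to\pi_1(Y)$ is the inclusion of a free factor and hence injective; and $Y$ deformation retracts onto a complex obtained from $X_+$ by attaching $g$ two-cells, so $\pi_1(X_+)\to\pi_1(Y)$ is surjective. This is part~(a). (When $g=0$ the bordism is a cylinder and every assertion is immediate.)

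For parts~(b1)--(b3), the key observation is that, since $\alpha$ is a central-curvature connection, $\d_\alpha^{[\g,\g]}$ is a flat connection on $P([\g,\g])$, so $H^\bullet(\,\cdot\,;\d_\alpha^{[\g,\g]})$ is cohomology with coefficients in this orthogonal local system and may be computed from relative cellular cochain complexes. The two handle descriptions from the previous paragraph exhibit a relative CW structure on $(Y,X_-)$ with cells only in dimension $1$ and a relative CW structure on $(Y,X_+)$ with cells only in dimension $2$. Therefore
\[H^k(Y,X_-;\d_\alpha^{[\g,\g]})=0 \ \text{ for } k\ne 1, \qquad H^k(Y,X_+;\d_\alpha^{[\g,\g]})=0 \ \text{ for } k\ne 2 .\]

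I would then conclude by feeding these vanishings into the long exact sequences of the two pairs. From the sequence of $(Y,X_+)$: the map $H^0(Y)\to H^0(X_+)$ lies between $H^0(Y,X_+)=0$ and $H^1(Y,X_+)=0$, hence is an isomorphism; the map $H^1(Y)\to H^1(X_+)$ has kernel equal to the image of $H^1(Y,X_+)=0$, hence is injective; the map $H^2(Y)\to H^2(X_+)$ has cokernel injecting into $H^3(Y,X_+)=0$, hence is surjective. From the sequence of $(Y,X_-)$: the map $H^0(Y)\to H^0(X_-)$ has kernel equal to the image of $H^0(Y,X_-)=0$, hence is injective; the map $H^1(Y)\to H^1(X_-)$ has cokernel injecting into $H^2(Y,X_-)=0$, hence is surjective; the map $H^2(Y)\to H^2(X_-)$ lies between $H^2(Y,X_-)=0$ and $H^3(Y,X_-)=0$, hence is an isomorphism. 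Together with part~(a) these are precisely the assertions~(b1)--(b3).

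I expect the only genuinely delicate point to be the structural claim in the first paragraph --- that the compression body is built from $X_-$ by handles of index $1$ rather than index $2$ --- which is where the genus comparison and the connectedness of level sets actually enter; the identification of the Morse/handle decomposition with a relative CW structure and the subsequent long exact sequence bookkeeping are routine.
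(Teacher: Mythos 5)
Your proof is correct, and while it rests on the same underlying idea as the paper's --- exploit the handle structure of the compression body --- the cohomological bookkeeping is organized genuinely differently. For part (a) you and the paper give essentially the same wedge/two-cell argument, but you additionally justify why the genus hypothesis forces the handles to be of index $1$ on the $X_-$ side (a separating attaching curve for a $2$-handle would permanently disconnect the level sets, so each $2$-handle is a surgery on a non-separating curve and strictly lowers the genus); the paper simply asserts this. For (b1)--(b3) the paper splits the work: the $H^0$ statements are deduced from the interpretation of $H^0$ as infinitesimal automorphisms (Lie algebras of centralizers of the holonomy) together with (a), and the $H^1,H^2$ statements come from a Mayer--Vietoris argument for the cover of $Y$ by a neighborhood of $\partial Y$ and a neighborhood of the stable manifolds, using the twisted Poincar\'e lemma on the pieces. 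You instead identify the twisted de~Rham groups with local-coefficient cellular cohomology, observe that $(Y,X_-)$ has relative cells only in dimension $1$ and $(Y,X_+)$ only in dimension $2$, and read off all six assertions from the two long exact sequences of pairs. This is cleaner and more uniform (one vanishing statement per pair does everything, including the $H^0$ claims), at the cost of explicitly invoking the twisted de~Rham theorem and its relative version to pass between the analytic and cellular definitions of $H^\bullet(\cdot\,;\d_\alpha^{[\g,\g]})$ --- an identification the paper also uses, but only implicitly. The two routes are of course closely related (your relative-cohomology vanishing is the excised form of the paper's Mayer--Vietoris input), so either is acceptable.
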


\begin{proof} 
  For simplicity of notation assume that
  $\partial Y = \ol{X}_- \sqcup X_+$, that is, the boundary
  identification $\phi$ is the identity.  By assumption, $Y$ supports
  a Morse function that is maximal on $X_+$, minimal on $X_-$, and
  only has critical points of index $1$. Then $Y$ deformation retracts
  onto the union of $X_-$ and the stable manifolds of the critical
  points, which are intervals.  Since $X_-$ is connected, the
  attaching points of these intervals may be homotoped so that they
  are equal. Thus $Y$ is homotopic to the wedge product of $X_-$ and a
  collection of circles.  By the Seifert-van Kampen theorem,
  $\pi_1(Y)$ is the free product of $\pi_1(X_-)$ and a copy of $\Z$
  for each critical point.  Hence $\pi_1(X_-) \to \pi_1(Y)$ is an
  injection.  Similarly $Y$ is obtained from $X_+$ by attaching
  $2$-handles.  Seifert-van Kampen in this case presents $\pi_1(Y)$ as
  the quotient of $\pi_1(X_+)$ by actions of $\Z$ for each critical
  point, given by the image of the attaching cycle in $\pi_1(X_+)$.

The assertions on zeroth cohomology follow from its interpretation as
infinitesimal automorphisms of a flat connection, which are the Lie
algebras of the centralizers of its holonomy groups, and part (a).
The remaining assertions follow from the Mayer-Vietoris principle for
the cohomology of $\d_\alpha$.  To apply it, we consider $Y$ as the
union $U \cup V$ where $U$ is a neighborhood of $X_\pm$ and $V$ is a
neighborhood of the union of stable manifolds of the critical points,
so that $U \cap V$ is a neighborhood of the attaching cycles. Then
$H^0(V;\d_\alpha^{[\g,\g]}) \to H^0(X_+ \cap V;\d_\alpha^{[\g,\g]})$
is surjective, since any infinitesimal automorphism of a flat
connection over the boundary of a disk extends over the interior.  By
the long exact sequence $H^1(Y;\d_\alpha^{[\g,\g]}) \to
H^1(X_+;\d_{\alpha | P_+}^{[\g,\g]})$ is injective.  On the other hand
$H^1(X_- \cap V;\d_\alpha^{[\g,\g]})$ vanishes, since $X_- \cap V$ is
homotopic to a finite set of points and the Poincar\'e lemma holds for
cohomology with twisted coefficients.  Hence
$H^1(Y;\d_\alpha^{[\g,\g]};\d_\alpha^{[\g,\g]})$ surjects onto
$H^1(X_-;\d_{\alpha | P_-}^{[\g,\g]})$.  Since $H^1(X_- \cap
V;\d_\alpha^{[\g,\g]})$ and $H^2(X_- \cap V;\d_\alpha^{[\g,\g]})$
vanish, $H^2(Y;\d_\alpha^{[\g,\g]} )$ is isomorphic to
$H^2(X_-;\d_{\alpha | P_-}^{[\g,\g]}) \cong H^0(X_-;\d_{\alpha |
  P_-}^{[\g,\g]})$.  Finally surjectivity onto $H^2(X_+;\d_{\alpha |
  P_+}^{[\g,\g]})$ follows from the exact sequence.
\end{proof}  

In order to understand the topology of $L(Q)$ we use an explicit description in terms of representations of the extended fundamental group as in Remark~\ref{repdes}.

\begin{lemma}
\label{holL}
Suppose that $Y$ is an elementary bundle bordism from a surface $
X_-$ of genus $g$ to a surface $X_+$ of genus $g + 1$. Let $Q$ be a
$G$-bundle over $Y$.  Then in the description of Remark~\ref{repdes},
the Lagrangian $L(Q)\subset M(P_-)^- \times M(P_+)$ is the set of pairs
of equivalence classes
$$ \bigl([a_1,b_1,\ldots,a_g,b_g], 
[a_1,b_1,\ldots,a_{g+1},{\rm Id}]
\bigr) \in G_0^{2g}/G_0 \times G_0^{2g+2}/G_0
$$ 
satisfying the relation $\prod_{j=1}^{g} [a_j,b_j]= z $. In
particular, the projection $\pi:L(Q)\to M(P_-)$ gives $L(Q)$ the
structure of a $G_0$-bundle over $M(P_-)$, whose vertical tangent
space is the associated bundle
$$ \ker(D \pi) \cong \Set{ [a_1,b_1,\ldots,a_g,b_g,\xi] \in G_0^{2g}
\times \g | \prod_{j=1}^g [a_j,b_j] = z  } / G_0
.$$
\end{lemma}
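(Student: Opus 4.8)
The plan is to prove everything inside the representation‑variety model of Remark~\ref{repdes}; we may assume $g\ge 1$, the case $g=0$ being vacuous since then $M(P_-)=\emptyset$ by Theorem~\ref{MP}. Since $Y$ is an elementary bordism changing the genus, it has a single critical point, necessarily of index $1$ (the genus of $X_+$ exceeds that of $X_-$ by one and both are connected); thus $Y$ is a compression body, Lemma~\ref{relative}(a) gives that $\pi_1(X_+)\to\pi_1(Y)$ is onto, and Proposition~\ref{embed} shows $\rho:M(Q)\to L(Q)$ is a bijection. Hence it suffices to describe $M(Q)$ together with its two restriction maps. For this I would first fix compatible presentations of the fundamental groups: viewing $X_+$ as $X_-$ with a tube attached along the core arc of the $1$‑handle, the belt circle $\gamma\subset X_+$ of the handle is non‑separating, so by the change‑of‑coordinates principle on surfaces (and Lemma~\ref{Pdontmatter}, which renders the final statement coordinate‑free) we may take standard symplectic generating sets
\[
\pi_1(X_-)=\langle a_1,b_1,\dots,a_g,b_g\mid \textstyle\prod_{j=1}^g[a_j,b_j]\rangle,\quad
\pi_1(X_+)=\langle a_1,b_1,\dots,a_{g+1},b_{g+1}\mid \textstyle\prod_{j=1}^{g+1}[a_j,b_j]\rangle
\]
with $\gamma=b_{g+1}$. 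Van Kampen then gives $\pi_1(Y)=\langle a_1,b_1,\dots,a_g,b_g,a_{g+1}\mid\textstyle\prod_{j=1}^g[a_j,b_j]\rangle$, with $\pi_1(X_-)\hookrightarrow\pi_1(Y)$ the evident inclusion, $a_{g+1}$ the loop through the handle, and $\pi_1(X_+)\twoheadrightarrow\pi_1(Y)$ the quotient killing $b_{g+1}$ (which bounds a disc in $X_-\subset Y$) and fixing the remaining generators.

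Next I would run the moduli‑space/representation dictionary for the three‑manifold $Y$, exactly parallel to Lemma~\ref{pides} and Remark~\ref{repdes}: choosing a determinant connection whose curvature is supported near a properly embedded arc from $X_-$ to $X_+$, one identifies $M(Q)$ with conjugacy classes of homomorphisms $\pi_1(Y)\to G_0$ that lift the associated $\Ad(G)$‑representation and carry the prescribed central obstruction $z=\exp(2\pi i d/r){\rm Id}\in Z^{\ss}$ — the same $z$ as for $P_\pm$, since $c_1(\det Q)$ restricts to degree $d$ on each boundary surface and $H^2(Y;\Z)\to H^2(X_\pm;\Z)$ is an isomorphism. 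As the extra free generator $a_{g+1}$ contributes nothing to $H^2$, the obstruction is carried by the surface part $\prod_{j=1}^g[a_j,b_j]$, giving
\[
M(Q)\cong\{(a_1,b_1,\dots,a_g,b_g,a_{g+1})\in G_0^{2g+1}\mid \textstyle\prod_{j=1}^g[a_j,b_j]=z\}/G_0 .
\]
Combining this with the restriction homomorphisms of the first step makes $\rho$ explicit: it sends $[a_1,\dots,b_g,a_{g+1}]$ to $\big([a_1,\dots,b_g],[a_1,\dots,b_g,a_{g+1},{\rm Id}]\big)$, whose second entry lies in $M(P_+)$ because $[a_{g+1},{\rm Id}]={\rm Id}$ forces $\prod_{j=1}^{g+1}[a_j,b_j]=z$. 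Since $\rho$ is a bijection onto $L(Q)$, this is precisely the asserted description of $L(Q)$.

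Finally I would read off the bundle structure. The map $\pi:L(Q)\to M(P_-)$ forgets the $a_{g+1}$‑coordinate; over $[a_1,\dots,b_g]$ the fibre is $\{[a_1,\dots,b_g,a_{g+1},{\rm Id}]:a_{g+1}\in G_0\}$, and since the stabiliser of $[a_1,\dots,b_g]$ in $G_0$ is the centre $Z^{\ss}$ (coprimality, i.e.\ $H^0(X_-;\d_\alpha^{[\g,\g]})=0$ by Theorem~\ref{MP}), which acts trivially by conjugation, this fibre is canonically $G_0$. Local triviality follows by pulling back local sections of the free principal $\Ad(G)=G_0/Z^{\ss}$‑bundle $\{\prod[a_j,b_j]=z\}\to M(P_-)$ (free again by coprimality); the transition functions act on the fibre $G_0$ by conjugation, exhibiting $L(Q)$ as the associated fibre bundle with fibre $G_0$ and structure group $\Ad(G)$ acting by conjugation. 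Differentiating the conjugation action in the fibre direction identifies $\ker(D\pi)$ with the associated bundle for the adjoint action, i.e.\ $\{[a_1,b_1,\dots,a_g,b_g,\xi]\in G_0^{2g}\times\g\mid\prod_{j=1}^g[a_j,b_j]=z\}/G_0$ (the action being trivial on the central summand of $\g$), as claimed.

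I expect the main obstacle to be the step establishing the explicit representation model of $M(Q)$: one must check that the determinant twist over the three‑manifold $Y$ produces exactly the central value $z$ and that it is localized on the surface part, so that $a_{g+1}$ ranges freely over $G_0$. This is the honest three‑manifold analogue of Remark~\ref{repdes} and is best verified either by a careful choice of determinant connection (curvature concentrated near the arc above, followed by a holonomy argument as in the proof of Lemma~\ref{pides}) or via the Mayer–Vietoris sequence for $\d_\alpha$ used in the proof of Lemma~\ref{relative}. A secondary point needing care is the compatible choice of standard generating sets in the first step, which hinges on $\gamma$ being non‑separating together with the surface change‑of‑coordinates principle; the remainder is routine bookkeeping with van Kampen and the standard local‑slice argument for the free $\Ad(G)$‑action.
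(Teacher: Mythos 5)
Your proposal is correct and follows essentially the same route as the paper: present $\pi_1(Y)$ as the quotient of $\pi_1(X_+)$ by the belt circle $\beta_{g+1}$ (equivalently the free product of $\pi_1(X_-)$ with $\Z$ on the handle loop $a_{g+1}$), transport the representation-variety description of Remark~\ref{repdes} to $Y$, and use a determinant connection with curvature concentrated near an arc between base points -- so that it vanishes on a disk bounding $\beta_{g+1}$ -- to force the holonomy around $\beta_{g+1}$ to be the identity rather than an arbitrary central element. The step you flag as the main obstacle is resolved in the paper by exactly the holonomy argument you propose, and your extra details on local triviality of the $G_0$-bundle and the injectivity of $\rho$ are harmless elaborations of what the paper asserts.
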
 

\begin{proof}
By Lemma \ref{pides}, the moduli space $M(Q)$ is a
$\Hom(\pi_1(Y),Z^{\ss})$ fiber bundle over a component of
$\Hom(\pi_1(Y),\Ad(G))/\Ad(G)$.  Choose generators
$$\alpha_1,\beta_1,\ldots,\alpha_{g+1},\beta_{g+1} \in \pi_1(X_+), 
\quad \prod_{j=1}^{g+1} [\alpha_j,\beta_j]={\rm
  Id}_{\pi_1(X_+)} $$
such that the maps of fundamental groups in Lemma~\ref{relative}~(a)
  realize 
\begin{enumerate} 
\item  $\pi_1(Y)$ as the quotient of $\pi_1(X_+)$ by the subgroup
  generated by $\beta_{g+1}$, and 
\item $\pi_1(X_-)$ as the subgroup of $Y$ generated by
  $\alpha_1,\beta_1,\ldots,\alpha_{g},\beta_{g}$. 
\end{enumerate} 
Now $\Hom(\pi_1(Y),\Ad(G))/\Ad(G)$ and $\Hom(\pi_1(Y),Z)$ are the
subsets of the spaces of representations of $\pi_1(X_+)$ given by
requiring that the representation vanishes on $\beta_{g+1}$.  It
follows that $L(Q)$ is equal to the space of representations mapping
$\beta_{g+1}$ to some element $h$ of $Z$.

Now we assume that $Q$ is equipped with a determinant connection whose
curvature vanishes except on a small neighborhood of a path connecting
base points in $X_\pm$, disjoint from the stable and unstable
manifolds.  In particular, the curvature vanishes on a disk with
boundary $\beta_{g+1}$.  Since $h$ is equal to the holonomy around the
vanishing cycle $\beta_{g+1}$, $h$ must equal the identity.
\end{proof}

This finishes the proof of Theorem~\ref{thm lagembed}.  Together with
the subsequent Lemma this defines the functors for elementary bordisms
for any fixed coprime $r,d$:

\begin{lemma} {\rm (Existence and uniqueness of Lagrangians associated to elementary bordisms)}  For $d,r$ coprime positive integers:
\begin{enumerate} 
\item For any elementary bordism $Y$ from $X_-$ to $X_+$ there
  exists a $U(r)$-bundle $Q \to Y$ such that both $Q|_{X_-}$ and
  $Q|_{X_+}$ have degree $d$.
\item 
The Lagrangian correspondence $L(Q)\subset M(P_-)^-\times M(P_+)$ is
independent of the choice of $Q$ under the canonical
symplectomorphisms of Lemma~\ref{Pdontmatter}.
\item 
The Lagrangian correspondence associated to the trivial bordism
$[0,1]\times X$ is the diagonal $L([0,1]\times P)=\Delta_{M(P)}\subset
M(P)^-\times M(P)$.
\end{enumerate} 
\end{lemma}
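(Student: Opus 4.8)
The plan is to handle the three assertions in order, using only standard facts about $U(r)$-bundles over manifolds of dimension $\le 3$ together with the representation-variety description of Remark~\ref{repdes} and the gauge-theoretic set-up already in place. \emph{For (a), existence of $Q$:} recall (as in the proof of Lemma~\ref{Pdontmatter}) that $U(r)$-bundles over a manifold of dimension at most $3$ are classified up to isomorphism by their first Chern class, and that over a closed connected oriented surface $H^2\cong\Z$ with $c_1$ equal to the degree. For the $3$-manifold $Y$, the long exact sequence of the pair $(Y,X_-)$ together with the Lefschetz duality isomorphism $H^3(Y,X_-)\cong H_0(Y,X_+)=0$ (using that $Y$ is connected and $X_+\ne\emptyset$) shows that restriction $H^2(Y;\Z)\to H^2(X_-;\Z)$ is surjective, so there is a class $c\in H^2(Y;\Z)$ with $\langle c,[X_-]\rangle=d$. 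Since $[\partial Y]=0$ in $H_2(Y;\Z)$ and $\phi$ identifies $\partial Y$ with $\ol X_-\sqcup X_+$, we have $[X_+]=[X_-]$ in $H_2(Y;\Z)$, so the bundle $Q$ with $c_1(Q)=c$ has $\deg(Q|_{X_\pm})=\langle c,[X_-]\rangle=d$. (For a cylindrical $Y$ one may simply take $Q=[0,1]\times P$.)

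\emph{For (b), independence of the choice of $Q$:} first I would observe that an elementary bordism $Y$ satisfies $H^2(Y;\Z)\cong\Z$. Indeed, a cylindrical $Y$ is homotopy equivalent to its base surface, while a non-cylindrical elementary $Y$ has a unique critical point, necessarily of index $1$ or $2$ (indices $0$ and $3$ force a disconnected level set), so $Y$ is homotopy equivalent to the lower-genus boundary surface with a single $1$-cell attached, i.e.\ to that surface wedge a circle; in each case $H^2(Y;\Z)\cong\Z$. Combined with (a) this makes $H^2(Y;\Z)\to H^2(X_-;\Z)$ an isomorphism, so the bundle $Q$ with $\deg(Q|_{X_\pm})=d$ is unique up to isomorphism. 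Given any bundle isomorphism $\theta:Q\to Q'$ over $\Id_Y$, its restrictions $\theta_\pm:P_\pm\to P_\pm'$ cover $\Id_{X_\pm}$, hence induce exactly the canonical symplectomorphisms $\theta_\pm^*:M(P_\pm')\to M(P_\pm)$ of Lemma~\ref{Pdontmatter} (independent of $\theta$ by part (b) of that lemma). Since pullback by $\theta$ commutes with restriction to the boundary, $\rho\circ\theta^*=(\theta_-^*\times\theta_+^*)\circ\rho$ on moduli spaces, and therefore $L(Q)=(\theta_-^*\times\theta_+^*)\bigl(L(Q')\bigr)$, which is the desired statement.

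\emph{For (c), the trivial bordism:} by (a)--(b) the bundle over $[0,1]\times X$ is $[0,1]\times P$ up to isomorphism, so it suffices to identify $M([0,1]\times P)$. Using Remark~\ref{delta dont matter} I would take the determinant connection to be pulled back from $X$; then for $\alpha\in\A_\delta([0,1]\times P)$ the $dt$-component of $\det(\alpha)$ vanishes, so $\alpha(\partial_t)$ is $[\g,\g]$-valued, and solving the linear ODE $\partial_t g=g\,\alpha(\partial_t)$ with $g|_{t=0}=\Id$ produces a gauge transformation in $\G_0([0,1]\times P)$ carrying $\alpha$ to temporal gauge $\alpha=A(t)$. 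Central curvature then gives $F_{A(t)}^{[\g,\g]}=0$ and $(\partial_t A)^{[\g,\g]}=0$, and the fixed (constant-in-$t$) determinant forces the $\z$-component of $A(t)$ to be constant as well; hence $A(t)\equiv A(0)$, so $\alpha$ is gauge-equivalent to $\pi^*A(0)$ with $A(0)\in\A(P)$ central-curvature and fixed-determinant. Restricting to $\{0\}\times X$ shows that $[\,\cdot\,]\mapsto[A(0)]$ is a well-defined bijection $M([0,1]\times P)\to M(P)$, and both boundary restriction maps equal this bijection; therefore $L([0,1]\times P)=\{(x,x):x\in M(P)\}=\Delta_{M(P)}$. (Alternatively: the two inclusions $X\times\{0\},X\times\{1\}\hookrightarrow[0,1]\times X$ are homotopic, so in the language of Remark~\ref{repdes} the two restrictions on representation varieties coincide; as $L([0,1]\times P)$ is a compact Lagrangian of the same dimension as the connected diagonal it is contained in, equality follows.)

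The real content lies in (c): pinning down $L$ for the cylinder as \emph{exactly} the diagonal — rather than merely contained in, or containing, it — requires the explicit gauge computation, and one must check that the transformation putting $\alpha$ in temporal gauge lies in $\G_0$ so as to respect the fixed-determinant condition. The other delicate point is the topological input in (b) that an elementary bordism has $H^2\cong\Z$, which is precisely what rules out any exotic $U(r)$-bundle with the prescribed boundary behaviour; (a) and the naturality argument in (b) are then routine bookkeeping.
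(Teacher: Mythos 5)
Your proof is correct and follows essentially the same route as the paper's: classification of $U(r)$-bundles over a $3$-complex by $c_1$, the computation $H^2(Y)\cong H^2(X_\pm)\cong\Z$ for an elementary bordism, and the temporal-gauge argument for the cylinder. You supply more detail than the paper does (Lefschetz duality for the existence step, the explicit check that the temporal-gauge transformation has trivial determinant and that central curvature plus fixed determinant force $\partial_t A=0$), but the underlying argument is the same.
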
 

\begin{proof} 
Let $Y$ be an elementary bordism from $X_-$ to $X_+$.  A simple
computation using e.g. cellular homology shows that since $Y$ is
elementary, $H^2(Y) \cong H^2(X_\pm) \cong \Z$.  Taking a $U(r)$
bundle induced from a $U(1)$-bundle with first Chern class $d$ shows
that the bundle $Q$ above exists.  Since the homotopy groups
$\pi_1(SU(r))$ and $\pi_2(SU(r))$ are trivial, the bundle $Q$ is
unique up to isomorphism.  These arguments imply that $M(Q)$ is
well-defined and $L(Q)$ is independent of the choice of $Q$.  In the
case of a product bordism $Q = [0,1 ] \times P$, we take $\delta$ to
be the pull-back of a determinant connection on $P_\pm$.  Since any
connection is gauge-equivalent to one vanishing on the fibers of
$[0,1] \times P \to P$, pull-back defines an isomorphism $M(Q) \cong
M(P_\pm)$.  Thus $M(Q)$ is embedded in $M(P_-) \times M(P_+)$ via the
diagonal.
\end{proof}

\subsection{Cerf moves for moduli spaces}

This Section provides the final steps in the construction of a
symplectic-valued field theory in dimension $2+1$ via
Theorem~\ref{cerfthm}.  As before, we fix $G = U(r)$ for some $r\in\N$
and fix a degree $d\in\Z$ coprime to $r$.

\begin{theorem} \label{sympval} {\rm (Field theory via moduli spaces)}   
For $d,r$ coprime positive integers there exists a unique
$2+1$-dimensional connected symplectic-valued field theory 
$ \Phi: \Cob_{2+1}^0 \to \Symp^{\sharp}$
that associates
\begin{enumerate}
\item 
to each surface $X$ the moduli space 
$$ \Phi(X)=M(P)=:M(X)$$
constructed in Section~\ref{centralsurf} using a choice of
$U(r)$-bundle $P \to X$ of degree $d$, and
\item
to each elementary bordism $[Y]$ from $X_-$ to $X_+$, the equivalence class 
$$\Phi(Y)=[L(Y)], \quad 
L(Y):=L(Q)\subset M(X_-)^-\times M(X_+)$$
constructed in Section~\ref{sec:cobord} using a choice of
$U(r)$-bundle $Q \to Y$ pulling back
to the given bundles over $X_\pm$.
\end{enumerate} 
\end{theorem}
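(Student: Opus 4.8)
The plan is to invoke Theorem~\ref{cerfthm}, which reduces the construction of $\Phi$ to specifying it on objects and on elementary bordisms, sending the trivial bordism to an identity, and verifying the three Cerf relations; uniqueness of the resulting field theory is then automatic. On objects I would set $\Phi(X)=M(X)$, noting that by Theorem~\ref{MP} the space $M(X)$ is compact with $c_1(M(X))=2r[\omega_{M(X)}]$ — hence monotone with constant $1/2r$ — and has even minimal Chern number, so it is a $1/2r$-admissible object of $\Symp^{\sharp}_{1/2r}$ (for $g(X)=0$ it is empty and for $g(X)=1$ a point, both admissible). On an elementary bordism $[Y]$ I would choose a $U(r)$-bundle $Q\to Y$ restricting to the given degree-$d$ bundles on $X_\pm$ — such $Q$ exists and is unique up to isomorphism by the final Lemma of Section~\ref{sec:cobord} — and set $\Phi([Y])=[L(Q)]$, which by Theorem~\ref{thm lagembed} is an embedded, compact, simply-connected, spin Lagrangian correspondence, hence an admissible morphism, independent of $Q$ by the same Lemma; and $\Phi([[0,1]\times X])=[\Delta_{M(P)}]=1_{M(P)}$ by that Lemma as well.

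The substance lies in the Cerf relations. Each of them asserts an equality of morphisms of $\Symp^{\sharp}_{1/2r}$ between composites of correspondences attached to elementary bordisms, so by the definition of the composition-equivalence relation it suffices, in each case, to prove that the geometric composition $L(Q_0)\circ L(Q_1)$ is transverse and embedded and equals $L(Q_0\circ Q_1)$. For the critical-point-switch relation I would further take $Q_0,Q_1$ and $Q_0',Q_1'$ to be the restrictions of one bundle $Q$ over the common bordism $Y_0\circ Y_1=Y_0'\circ Y_1'$ (legitimate since $L$ is independent of the bundle for elementary bordisms), so that the identity $L(Q_0\circ Q_1)=L(Q)=L(Q_0'\circ Q_1')$ is tautological. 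The point is then that the geometric intersection computing $L(Q_0)\circ L(Q_1)$ becomes, after identifying each $M(Q_i)$ with $L(Q_i)$ via $\rho$, the fiber product $M(Q_0)\times_{M(P_1)}M(Q_1)$: Proposition~\ref{fiber} identifies this diffeomorphically with $M(Q_0\circ Q_1)$ and, being a transverse fiber product, makes the geometric composition transverse, while the projection to $M(P_0)\times M(P_2)$ corresponds to $\rho:M(Q_0\circ Q_1)\to M(P_0)\times M(P_2)$, which has image $L(Q_0\circ Q_1)$ and is injective by Proposition~\ref{embed}, giving embeddedness and the desired set equality. So the whole problem reduces to checking the hypotheses of Propositions~\ref{embed} and~\ref{fiber} for $Q_0$, $Q_1$ and $Q_0\circ Q_1$ appearing in each relation.

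Most of these are already available. The vanishing $H^0(X;\d_\alpha^{[\g,\g]})=H^2(X;\d_\alpha^{[\g,\g]})=0$ on the boundary surfaces is Theorem~\ref{MP}, and this is where coprimality of $r$ and $d$ is used. Every elementary bordism has at most one critical point, hence is a compression body, so Lemma~\ref{relative} supplies the surjectivity of one of $\pi_1(X_\pm)\to\pi_1(Y)$ (hence injectivity of $\rho$ on the $\Ad(G)$- and $Z^{\ss}$-representation varieties), the vanishing of $H^0$ and $H^2$ on $Y$, and — with the long exact sequence in $\d_\alpha^{[\g,\g]}$-cohomology — the vanishing $H^1(Y,\partial Y;\d_\alpha^{[\g,\g]})=0$. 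The surjectivity of the restriction $\G_0(Q_j)\to\G_0(P_1)$ holds because it amounts to extending a section of a bundle with fiber $SU(r)$ over a $1$- or $2$-handle and $\pi_i(SU(r))=0$ for $i\le 2$; the stabilizer-restriction condition holds because by coprimality the stabilizers are the finite group of constant central gauge transformations valued in $Z^{\ss}$, on which restriction is an isomorphism; and the surjectivity of the difference map on $H^1$ is the Mayer--Vietoris connecting map being onto, which follows from $H^2(Y_0\circ Y_1;\d_\alpha^{[\g,\g]})=0$. Boundary surfaces of genus $0$ or $1$ are degenerate — $M(X)$ empty or a point — and in the elementary bordisms touching them everything in sight is empty or a point, so the relations hold by inspection.

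The remaining, and hardest, input is the vanishing of $H^0$ and $H^2$ of $\d_\alpha^{[\g,\g]}$ on the composite bordism $Y_0\circ Y_1$, together with the injectivity of $\rho$ there. When $Y_0\circ Y_1$ is a cylinder (the cancellation case of the first relation) or elementary (the cylinder-cancellation relation) it is again a compression body, so Lemma~\ref{relative} applies directly. For the critical-point-switch relation the composite may carry two critical points of opposite index and need not be a compression body; here I would exploit the hypothesis that the two attaching cycles are disjoint, so that $Y_0\circ Y_1$ deformation-retracts onto one boundary surface with two independent handle cores attached, and a van Kampen / Mayer--Vietoris decomposition into a neighbourhood of that surface and neighbourhoods of the two cores reduces the cohomology computation to the boundary surfaces (where Theorem~\ref{MP} gives the vanishing) and shows that $\pi_1(X_-)$ and $\pi_1(X_+)$ together generate $\pi_1(Y_0\circ Y_1)$, whence $\rho$ is injective. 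This handle-by-handle analysis of the switch is where I expect the real work to lie; granting it, Propositions~\ref{embed} and~\ref{fiber} give the Cerf relations and Theorem~\ref{cerfthm} produces the field theory $\Phi$.
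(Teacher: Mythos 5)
Your overall architecture coincides with the paper's: reduce to Theorem~\ref{cerfthm}, realize each Cerf relation as the statement that the geometric composition $L(Q_j)\circ L(Q_{j+1})$ is transverse, embedded and equal to $L(Q_j\circ Q_{j+1})$, and verify the hypotheses of Propositions~\ref{embed} and~\ref{fiber} via Theorem~\ref{MP} and Lemma~\ref{relative}; for critical point cancellation and cylinder gluing your argument is exactly the paper's. The divergence, and the gap, is the critical point switch when the two critical points have \emph{different} index. There the composite $Y_j\cup Y_{j+1}$ is not a compression body, so Lemma~\ref{relative} does not apply to it, and your substitute --- retract onto one boundary surface with two handle cores attached and ``reduce the cohomology computation to the boundary surfaces'' --- does not close. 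The Mayer--Vietoris sequence for that retract leaves you needing surjectivity of the restriction $H^1(X;\d_\alpha^{[\g,\g]})\to H^1(\gamma;\d_\alpha^{[\g,\g]}|_\gamma)$ onto the twisted cohomology of the attaching circle $\gamma$ of the $2$-handle; this is not supplied by Theorem~\ref{MP} and is precisely the nontrivial content of $H^2(Y_j\cup Y_{j+1};\d_\alpha^{[\g,\g]})=0$. Similarly, for embeddedness you cannot invoke Proposition~\ref{embed} as stated, since in the mixed-index case neither $\pi_1(X_-)\to\pi_1(Y_j\cup Y_{j+1})$ nor $\pi_1(X_+)\to\pi_1(Y_j\cup Y_{j+1})$ need be surjective; your ``together generate'' variant still has to exclude the ambiguity of conjugating the two boundary restrictions by different group elements before it yields injectivity of $\rho$.

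The paper sidesteps both points. For the cohomological input it exploits that the switch furnishes \emph{two} decompositions of the same composite: in the one with the index-$2$ handle attached first the middle surface has the smallest genus, so both pieces are compression bodies whose lower-genus boundary is the middle surface, Lemma~\ref{relative}(b2) makes both restrictions to $H^1$ of the middle surface surjective, hence the difference map in Proposition~\ref{fiber}(c) is surjective outright; this forces $H^2(Y_j\cup Y_{j+1};\d_\alpha^{[\g,\g]})=0$, and Mayer--Vietoris then transfers the surjectivity to the one-handle-first decomposition. For transversality and embeddedness of the geometric composition in the switch case the paper does not use Proposition~\ref{embed} at all, but checks them by the explicit holonomy description of $L(Y_j)$ and $L(Y_{j+1})$ from Lemma~\ref{holL} (see the remark on the holonomy description of Cerf moves). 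To keep your direct route you would have to prove the $H^1$-restriction surjectivity onto the attaching circle and the injectivity of $\rho$ on the composite as separate lemmas; otherwise adopt the paper's order-reversal argument.
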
 

\begin{remark}  {\rm (Generalized correspondences for bordisms)}  
Theorem \ref{sympval} associates to any morphism $[Y]$ from $X_-$ to
$X_+$ in $\Cob_{2+1}^0$ an equivalence class of generalized Lagrangian
correspondences as follows. Let
$$ Y = Y_{1} \cup_{X_1} Y_{2} \cup_{X_2} \ldots \cup_{X_{r-1}} Y_m $$
be a Cerf decomposition into elementary bordisms.  Associated to each
piece $Y_{k}$ is a Lagrangian correspondence $L(Y_k)$ from
$M(X_{k-1})$ to $M(X_k)$, where by convention $M(X_0) = M(X_-)$ and
$M(X_m) = M(X_+)$.  Hence our construction associates a generalized
Lagrangian correspondence from $M(X_-)$ to $M(X_+)$ to the Cerf
decomposition of $Y$,
$$ 
\ul{L}((Y_k)_{k=1,\ldots,m}) := \bigl(L(Y_{1}), \ldots, L(Y_m)\bigr) .
$$
Theorem \ref{sympval} implies that the equivalence class 
$$ \Phi(Y) = [\ul{L}((Y_k)_{k=1,\ldots,m}))] = [(L(Y_{1}), \ldots,
  L(Y_m))] = \Phi(Y_{1})\circ \ldots \circ\Phi(Y_m) $$
  as an element in $\Hom_{\Symp^{\sharp}}( M(X_-),M(X_+))$
  is independent of the choice of Cerf decomposition. \label{change12}
  This ends the remark.
\end{remark} 

We already proved in Theorem \ref{thm lagembed} that the Lagrangian
correspondences involved are simply-connected and spin.  To complete
the proof of the theorem it suffices to show that the Cerf relations
of Theorem \ref{cerfthm} are satisfied for the Lagrangian
correspondences constructed in the previous section.  

\begin{lemma} 
For each of the Cerf moves in Theorem \ref{cerf}, the conditions in
Proposition \ref{fiber} hold.
\end{lemma}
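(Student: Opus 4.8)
The plan is to verify the four conditions (a)--(d) of Proposition~\ref{fiber} for the bundle bordisms $Q_0\to Y_0$, $Q_1\to Y_1$ and $Q_0\circ Q_1\to Y_0\cup_{X_1}Y_1$ produced by each of the three types of Cerf move (critical point cancellation/creation, critical point switch, cylinder cancellation/creation), with $G=U(r)$ and $d$ coprime to $r$. Throughout I would assume every surface in sight ($X_0,X_1,X_2$ and the level sets of the elementary pieces) has positive genus: if one of them has genus $0$ then its moduli space is empty by Theorem~\ref{MP}, hence so is the moduli space of any bundle bordism restricting onto it, hence $M(Q_0\circ Q_1)=\emptyset$ and (a)--(d) hold vacuously (and all correspondences involved are empty). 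The three inputs I expect to use are Theorem~\ref{MP} (so that $H^0(X;\d_\alpha^{[\g,\g]})=0$ on positive-genus surfaces, hence $H^2(X;\d_\alpha^{[\g,\g]})=0$ by Poincar\'e duality), Lemma~\ref{relative} applied to each elementary piece (an elementary bordism has at most one critical point, hence is a compression body, possibly after reversing the Morse function so that the higher-genus boundary plays the role of $X_+$), and the Mayer--Vietoris sequence for $\d_\alpha^{[\g,\g]}$-cohomology of the decomposition $Y_0\cup_{X_1}Y_1$.

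Conditions (a), (b), (d) I expect to be routine. Vanishing of $H^0(X_1;\d_\alpha^{[\g,\g]})$ and $H^2(X_1;\d_\alpha^{[\g,\g]})$ in (a) is Theorem~\ref{MP}. In (b) one gets $H^0(Y_j;\d_\alpha^{[\g,\g]})\cong H^0(X_+;\d_\alpha^{[\g,\g]})=0$ and $H^2(Y_j;\d_\alpha^{[\g,\g]})\cong H^2(X_-;\d_\alpha^{[\g,\g]})\cong H^0(X_-;\d_\alpha^{[\g,\g]})=0$ from Lemma~\ref{relative}(b1),(b3) and Poincar\'e duality (and trivially when $Y_j$ is cylindrical); the surjectivity of the stabilizer restriction holds because in the coprime case every stabilizer equals the finite central subgroup $Z^{\ss}=e^{2\pi i\Z/r}{\rm Id}$ --- for $Y_j$ this follows since $H^0(Y_j;\d_\alpha^{[\g,\g]})=0$ makes the stabilizer discrete, a non-central automorphism would split the associated vector bundle and contradict coprimality (as in the proof of Theorem~\ref{MP}), and $\pi_1$ of the higher-genus boundary surjects onto $\pi_1(Y_j)$ by Lemma~\ref{relative}(a) --- and restriction is the identity on $Z^{\ss}$. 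Condition (d), surjectivity of $\rho\colon\G_0(Q_j)\to\G_0(P_1)$, holds for any handle attachment or cylinder because $\pi_1(SU(r))=\pi_2(SU(r))=0$, so a section of the trivial-determinant adjoint bundle over $X_1$ extends over the at most one handle cell (of dimension $\le 3$) of $Y_j$.

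The serious point is (c): the vanishings $H^0(Y_0\cup Y_1;\d_\alpha^{[\g,\g]})=H^2(Y_0\cup Y_1;\d_\alpha^{[\g,\g]})=0$ and surjectivity of $(\eta_0,\eta_1)\mapsto\eta_0|_{X_1}-\eta_1|_{X_1}$ from $H^1(Y_0;\d_\alpha^{[\g,\g]})\times H^1(Y_1;\d_\alpha^{[\g,\g]})$ to $H^1(X_1;\d_{\alpha|X_1}^{[\g,\g]})$. The first vanishing follows from Mayer--Vietoris and the already established $H^0(Y_j;\d_\alpha^{[\g,\g]})=0$. Since $H^0(X_1;\d_\alpha^{[\g,\g]})=0$ and $H^2(Y_j;\d_\alpha^{[\g,\g]})=0$, Mayer--Vietoris reduces to
\[ 0\to H^1(Y_0\cup Y_1;\d_\alpha^{[\g,\g]})\to H^1(Y_0;\d_\alpha^{[\g,\g]})\oplus H^1(Y_1;\d_\alpha^{[\g,\g]})\to H^1(X_1;\d_{\alpha|X_1}^{[\g,\g]})\to H^2(Y_0\cup Y_1;\d_\alpha^{[\g,\g]})\to 0, \]
so surjectivity of the difference map is equivalent to $H^2(Y_0\cup Y_1;\d_\alpha^{[\g,\g]})=0$ and it suffices to establish either. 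For a cylinder cancellation one of $Y_0,Y_1$ is a cylinder, so the relevant restriction $H^1(Y_j;\d_\alpha^{[\g,\g]})\to H^1(X_1;\d_{\alpha|X_1}^{[\g,\g]})$ is an isomorphism and the difference map is onto. For a critical point cancellation $Y_0\cup Y_1$ is itself a cylinder, so $H^2(Y_0\cup Y_1;\d_\alpha^{[\g,\g]})\cong H^2(X_0;\d_\alpha^{[\g,\g]})\cong H^0(X_0;\d_\alpha^{[\g,\g]})=0$. For a critical point switch, whenever $X_1$ is the lower-genus boundary of one of the two pieces --- which is automatic for a switch of two critical points of the same index, and also holds, after using the switch itself to pass to the index-$2$-then-index-$1$ ordering, for a switch between an index-$1$ and an index-$2$ critical point --- Lemma~\ref{relative}(b2) gives that $H^1(Y_j;\d_\alpha^{[\g,\g]})\to H^1(X_1;\d_{\alpha|X_1}^{[\g,\g]})$ is surjective, hence so is the difference map; if that reordering produces a genus-$0$ middle surface (which can only happen when the original middle surface has genus $1$) we are back in the vacuous case, since then $M(Q_0\circ Q_1)=\emptyset$.

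The hard part, I expect, is precisely the last case --- the $H^1$ difference map for a critical point switch between critical points of \emph{different} index, in the ordering with the higher-genus middle surface --- where the two summand images in $H^1(X_1;\d_{\alpha|X_1}^{[\g,\g]})$ are proper subspaces and one must use the defining geometric content of the switch (the attaching cycles of the two critical points being disjoint, equivalently representing independent non-separating classes on $X_1$) to see that their span is everything. I would handle this via the explicit representation-variety description of Lemma~\ref{holL} and Remark~\ref{repdes}, reducing the claim to the statement that a flat connection on $X_1$ cut open along two disjoint, homologically independent non-separating curves still admits no nonzero covariantly constant section; in the coprime setting this follows because both curves bound disks in $Y_0\cup Y_1$, so the holonomy of $\alpha$ is trivial along them, and $H^0$ of the cut-open surface is identified with the invariants of the remaining holonomy, which vanish as in Theorem~\ref{MP}. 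Independence of the resulting composed correspondence from the choice of decomposition (Theorem~\ref{sympval}) then transfers the conclusion to the other ordering of the switch.
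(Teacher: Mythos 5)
Your overall strategy coincides with the paper's: conditions (a), (b), (d) of Proposition~\ref{fiber} are routine consequences of Theorem~\ref{MP}, Lemma~\ref{relative} and $\pi_1(SU(r))=\pi_2(SU(r))=0$, and the substance of condition (c) is reduced via Mayer--Vietoris to the vanishing of $H^2(Y_0\cup Y_1;\d_\alpha^{[\g,\g]})$, which is then checked case by case (the union is a cylinder for a cancellation, one piece is a cylinder for a cylinder move, and Lemma~\ref{relative} handles the switches). Your treatment of the vacuous genus-zero situations and of the stabilizer condition in (b) is, if anything, more careful than the paper's.

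The one place you go astray is the final paragraph. You have already observed, correctly, that for a \emph{fixed} decomposition the surjectivity of the difference map is equivalent to $H^2(Y_0\cup Y_1;\d_\alpha^{[\g,\g]})=0$ --- and this is a condition on the glued bordism that does not see the decomposition. So for a mixed-index switch no further argument is needed for the ordering with the higher-genus middle surface: establish $H^2(Y_0\cup Y_1;\d_\alpha^{[\g,\g]})=0$ via the index-$2$-first ordering, where Lemma~\ref{relative}(b2) makes the difference map surjective, and then the Mayer--Vietoris sequence of the index-$1$-first ordering hands you surjectivity of \emph{its} difference map for free. This is exactly how the paper concludes. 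The two substitutes you propose instead are both defective: the appeal to Theorem~\ref{sympval} is circular, since that theorem is proved by combining the present lemma with Proposition~\ref{fiber} and Theorem~\ref{cerfthm}; and the reduction to ``no covariantly constant section of the cut-open surface'' is unjustified --- by the duality \eqref{Hodge duality} the group $H^2(Y_0\cup Y_1;\d_\alpha^{[\g,\g]})$ is dual to $H^1(Y_0\cup Y_1,\partial;\d_\alpha^{[\g,\g]})$, not to an $H^0$ of a surface, so the claimed equivalence would itself require proof. Delete the last paragraph, insert the one-sentence decomposition-independence observation, and the argument is complete and agrees with the paper's.
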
 

\begin{proof}
 Note that by assumption $H^0$ and $H^2$ of $\d_\alpha^{[\g,\g]}$
 vanish on $Y_i, Y_{i+1},X_{i-1},X_i,X_{i+1}$.  Recall $G_0 = [G,G] =
 SU(r)$ and $\G_0(P)$ is the corresponding group of gauge
 transformations.  The restriction map $\G_0(Q) \to \G_0(P_-) \times
 \G_0(P_+)$ is surjective, since $\pi_1(G_0), \pi_2(G_0)$, vanish.  It
 follows that there is no obstruction to extending over the $1,2$ and
 $3$-dimensional cells of the bordism.  It remains to check the
 surjectivity of the difference of the restriction maps $H^1(Y_i
 \sqcup Y_{i+1};\d_{\alpha}^{[\g,\g]}) \to H^1(X_i;\d_{ \alpha |
   P_i}^{[\g,\g]})$.  By Mayer-Vietoris this surjectivity is
 equivalent to injectivity of $H^2(Y_i \cup Y_{i+1};
 \d_\alpha^{[\g,\g]}) \to H^2(Y_i \sqcup Y_{i+1};
 \d_\alpha^{[\g,\g]})$.  This injectivity holds if $H^2(Y_i \cup
 Y_{i+1};\d_\alpha^{[\g,\g]}) = \{ 0 \}$.  We check this in each case.

\vskip .1in \noindent {\em Critical point cancellation}: Suppose that
two adjacent pieces $Y_i,Y_{i+1}$ are replaced by a single piece $Y_i
\cup Y_{i+1}$ diffeomorphic to a cylinder $[-1,1] \times X_{i-1}$.
Then every point in $M(Y_i \cup Y_{i+1}) \cong M(X_{i-1})$ has
vanishing $H^2$, as required.

\vskip .1in \noindent 
{\em Gluing in a cylinder}: An elementary bordism $Y_i$ and
cylindrical bordism $Y_{i+1}$ is replaced with another elementary
bordism diffeomorphic to $Y_i$.  Then every point in $M(Y_i \cup
Y_{i+1}) \cong M(Y_i)$ has vanishing $H^2$, as required.

\vskip .1in \noindent 
{\em Reversing order of critical points:} This move can be broken down
into stages, where in the first stage two elementary bordisms are
replaced by a compression body, and in the second stage the
compression body is replaced by two other elementary bordisms.  If
the two critical points have the same index, then $H^2$ vanishes by
Lemma \ref{relative}.  In the case of differing index, suppose that
the two-handle is attached first. Then the surjectivity claim holds by
Lemma \ref{relative}, since $X_i$ has larger genus than $X_{i+1}$ and
$Y_i$ is a compression body.  Thus $H^2$ vanishes.  By Mayer-Vietoris
the surjectivity property in Proposition \ref{fiber} holds for the
decomposition corresponding to attaching a one-handle first, as well.
\end{proof} 

\begin{remark} {\rm (Holonomy description of Cerf moves)}  
Suppose that $ X_{i-1}$ is a surface of genus $g$ and $X_i$ is a
surface of genus $g + 1$.
\begin{enumerate}
\item {\rm (Critical point cancellation)} By Remark \ref{holL}
  $L(Y_i)$ resp. the transpose of $L(Y_{i+1})$ may be identified with
  the set of pairs of orbits
$$ ([a_1,b_1,\ldots,a_g,b_g], [a_1',b_1',\ldots,a_{g+1}',b_{g+1}']) $$
such that
$$ b_{g+1}' = \Id, \ \ a_j = a_j', \ \ b_j = b_j', \ \ j=1,\ldots,g $$
resp.
$$ a_{g+1}' = \Id, \ \  a_j = a_j', \ \ b_j = b_j', \ \ j=1,\ldots,g .$$
Thus the composition $ L(Y_i) \circ L(Y_{i+1})$ is the diagonal. 
\item {\rm (Critical point switch)} Suppose that the initial
  decomposition $Y_i,Y_{i+1}$ corresponds to attaching a one-handle
  and then attaching a two-handle to $X_{i-1}$.  The surface $X_i$ is
  obtained from $X_{i+1}'$ by attaching two one-handles, so that the
  attaching one-cycles in $X_i$ correspond to disjoint generators of
  $\pi_1(X_i)$.  These generators we may take to equal $\alpha_1$
  resp. $\alpha_{g+1}$.  Then
$$L(Y_i) = \left\{ \begin{array}{l} (
    [a_2,b_2,\ldots,a_{g},b_{g}],[a_1',b_1',\ldots,a_{g+1}',b_{g+1}']
    )  \\ a_{1} = \Id, \ \ a_j = a_j', \ \ b_j = b_j',
    \ \ j=2,\ldots,g \end{array} \right\} .$$
Furthermore 
$$L(Y_{i+1})
= \left\{
\begin{array}{l} 
 ([a_1',b_1',\ldots,a_{g+1}',b_{g+1}'],[a_1'',b_1'',\ldots,a_{g}'',b_{g}'']) \\ 
 a_{g+1}' = \Id, \ \ a_j' = a_j'', \ \ b_j' = b_j'', \ \ j=1,\ldots,g \end{array}  \right\}.
$$
Thus $L(Y_i \cup Y_{i+1})$ is embedded into $M(X_{i-1})^- \times
M(X_{i+1})$, and the composition $L(Y_i) \circ L(Y_{i+1})$ is
transversal and equal to $L(Y_i \cup Y_{i+1})$.  The arguments for the
other order of attaching, and the other indices of critical points, are
similar.
\end{enumerate} 
\end{remark}

Applying Proposition \ref{fiber} and Theorem \ref{cerfthm} yields
Theorem \ref{sympval}.

\section{Functors for bordisms via quilts}
\label{floer for 3}

In this section, we associate to any three-dimensional bordism as
above a functor between the extended Fukaya categories of the moduli
spaces of connections associated to the incoming and outgoing
surfaces, using quilted Floer theory.

\subsection{Quilted Floer theory} 

In the paper \cite{quiltfloer} we generalized Lagrangian Floer theory
to the setting of Lagrangian correspondences.  

\begin{definition} {\rm (Quilted Floer homology)}  Suppose that
\begin{enumerate} 
\item $\ul{M} = (M_0,M_1,\ldots,M_m)$ is a sequence of compact
  simply-connected monotone symplectic manifolds with the same
  monotonicity constant $\tau$ and $N$-fold Maslov covers
  $\Lag^N(M_k) \to \Lag(M_k), k = 0,\ldots, m$;
\item $\ul{L} = (L_{01},L_{12},\ldots, L_{(m-1)m})$ is a sequence of
  compact graded relatively spin Lagrangian correspondences from $M_0$
  to $M_1$, $M_1$ to $M_2$ etc.
\item $L_0 \subset M_0, L_m \subset M_m$ are compact simply-connected
  graded relatively spin Lagrangian submanifolds.
\end{enumerate} 
The {\em quilted Floer cohomology} $HF(\ul{L})$ of a generalized
Lagrangian correspondence $\ul{L} = (L_1,L_2,\ldots, L_m)$ is the
$\Z_N$-graded cohomology of a complex whose differential is a signed
count of {\em quilted} holomorphic strips with boundary in $L_0,L_m$
and seams in $L_{(j-1)j}, j = 1,\ldots, m$.
\end{definition} 

\begin{remark} {\rm (Effect of geometric composition on quilted Floer cohomology)}  
We proved in \cite{ww:isom} that the quilted Floer cohomology groups
behave well under composition: if for some $j$, the composition
$L_{(j-1)(j+1)} := L_{(j-1)j} \circ L_{j(j+1)}$ is smooth and embedded
into $M_{j-1}^- \times M_{j+1}$ then the quilted Floer cohomology
group is unchanged up to isomorphism by replacing the pair
$L_{(j-1)j},L_{j(j+1)}$ with $L_{(j-1)(j+1)}$. An example is shown in
Figure \ref{shrinkfig}, for the case $m = 2$.
\end{remark} 

\begin{figure}[ht]
\begin{picture}(0,0)%
\includegraphics{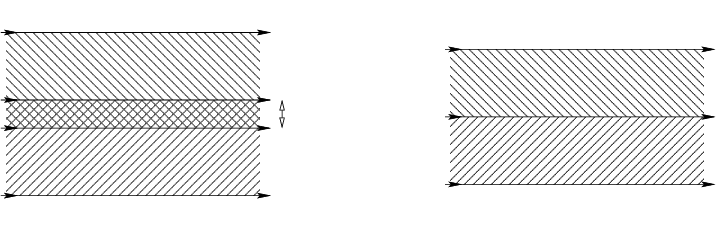}%
\end{picture}%
\setlength{\unitlength}{2200sp}%
\begingroup\makeatletter\ifx\SetFigFont\undefined%
\gdef\SetFigFont#1#2#3#4#5{%
  \reset@font\fontsize{#1}{#2pt}%
  \fontfamily{#3}\fontseries{#4}\fontshape{#5}%
  \selectfont}%
\fi\endgroup%
\begin{picture}(9549,3135)(589,-2461)
\put(2476,504){\makebox(0,0)[lb]{{$\mathbf{L_2}$}}}
\put(2476,-700){\makebox(0,0)[lb]{{$\mathbf{L_{12}}$}}}
\put(2476,-1111){\makebox(0,0)[lb]{{$\mathbf{L_{01}}$}}}
\put(2476,-2300){\makebox(0,0)[lb]{{$\mathbf{L_0}$}}}
\put(8751,-2130){\makebox(0,0)[lb]{{$\mathbf{L_0}$}}}
\put(8751,-961){\makebox(0,0)[lb]{{$\mathbf{L_{02}}$}}}
\put(8751,300){\makebox(0,0)[lb]{{$\mathbf{L_2}$}}}
\put(7626,-511){\makebox(0,0)[lb]{{$\mathbf{M_2}$}}}
\put(7626,-1486){\makebox(0,0)[lb]{{$\mathbf{M_0}$}}}
\put(1276,-1600){\makebox(0,0)[lb]{{$\mathbf{M_0}$}}}
\put(1276,-900){\makebox(0,0)[lb]{{$\mathbf{M_1}$}}}
\put(1276,-200){\makebox(0,0)[lb]{{$\mathbf{M_2}$}}}
\put(4750,-850){\makebox(0,0)[lb]{{$\delta$}}}
\end{picture}%
\caption{Shrinking the middle strip} \label{shrinkfig}
\end{figure}

In \cite{Ainfty} we associated to any Lagrangian correspondence
$L_{01}$ from $M_0$ to $M_1$ a functor $\Phi(L_{01})$ from a version
of the Fukaya category of $M_0$ to that of $M_1$:

\begin{definition} \label{genlag} Let $M$ be a $\tau$-admissible
  symplectic manifold as in Definition \ref{admiss} equipped with an
  $N$-fold Maslov cover $\Lag^N(M) \to \Lag(M)$.
\begin{enumerate}
\item {\rm (Generalized Lagrangian submanifolds)} A {\em generalized
  Lagrangian submanifold} of $M$ is a generalized Lagrangian
  correspondence from a point to $M$, that is, a sequence
  $L_{-s(-s+1)}, \ldots, L_{(-1)0}$ of correspondences from $M_{-s} =
  \pt$ to $M_0 = M$.  We say that a generalized Lagrangian
  correspondence satisfies a certain property (simply-connected,
  compact, etc.) if each correspondence in the sequence satisfies that
  property.
\item \label{extfuk} {\rm (extended Fukaya category)} Let $\Fuk^{\sharp}(M)$ denote
  the \ainfty category whose
\begin{enumerate} 
\item objects are compact, oriented generalized Lagrangian
  submanifolds of $M$ equipped with gradings and relative spin
  structures;
\item morphisms from an object $\ul{L}_0$ to an object $\ul{L}_1$ are
  quilted Floer homology cochains:
  \begin{equation} \label{mspace} \Hom(\ul{L}_0,\ul{L}_1) =
    CF(\ul{L}_0,\ul{L}_1) \end{equation}
constructed in \cite{quiltfloer};
\item composition and identities are defined by counting holomorphic
  quilts with strip-like ends and Lagrangian boundary and seam
  conditions as in \cite{Ainfty}.
\end{enumerate}
\item {\rm (Functors for Lagrangian correspondences)} Let $M_0,M_1$ be
  $\tau$-admissible symplectic manifolds.  For any compact, oriented,
  simply-connected relatively-spin graded correspondence
  $L_{01} \subset M_0^- \times M_1$ the functor
$$\Phi(L_{01}): \Fuk^{\sharp}(M_0) \to \Fuk^{\sharp}(M_1)$$ 
is defined on objects by
$$ (L_{-s(-s+1)}, \ldots, L_{(-1)0}) \mapsto (L_{-s(-s+1)}, \ldots,
L_{(-1)0},L_{01}) .$$
On morphisms and at the level of cohomology $\Phi(L_{01})$ is defined
by counting holomorphic quilts of the form in Figure \ref{morphism}.
\end{enumerate} 
\end{definition}  

\begin{figure}[ht]
\begin{picture}(0,0)%
\includegraphics{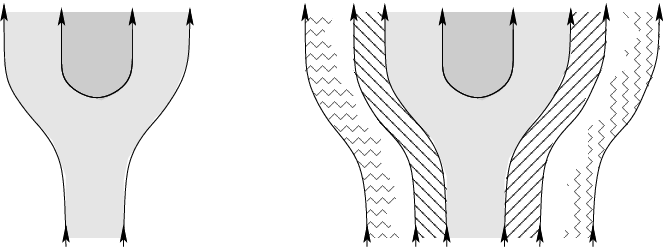}%
\end{picture}%
\setlength{\unitlength}{3108sp}%
\begingroup\makeatletter\ifx\SetFigFont\undefined%
\gdef\SetFigFont#1#2#3#4#5{%
  \reset@font\fontsize{#1}{#2pt}%
  \fontfamily{#3}\fontseries{#4}\fontshape{#5}%
  \selectfont}%
\fi\endgroup%
\begin{picture}(6743,2499)(-580,-3358)
\put(4120,-2536){\makebox(0,0)[lb]{$M_0$}}
\put(4120,-1401){\makebox(0,0)[lb]{$M_1$}}
\put(4120,-1956){\makebox(0,0)[lb]{$L_{01}$}}
\put(3691,-3121){\rotatebox{90.0}{\makebox(0,0)[lb]{{{${L'_{(-2)(-1)}}$}}}}}
\put(3991,-3101){\rotatebox{90.0}{\makebox(0,0)[lb]{{{${L'_{(-1)0}}$}}}}}
\put(4616,-3101){\rotatebox{90.0}{\makebox(0,0)[lb]{{{${L_{(-1)0}}$}}}}}
\put(4956,-3101){\rotatebox{90.0}{\makebox(0,0)[lb]{{{${L_{(-2)(-1)}}$}}}}}
\put(5650,-3301){\rotatebox{90.0}{\makebox(0,0)[lb]{{{${L_{(-m)(-m+1)}}$}}}}}
\put(2906,-3401){\rotatebox{90.0}{\makebox(0,0)[lb]{{{${L'_{(-m')(-m'+1)}}$}}}}}
\put(5906,-1886){\rotatebox{90.0}{\makebox(0,0)[lb]{{{${N_{-m+1}}$}}}}}
\put(5300,-1931){\rotatebox{90.0}{\makebox(0,0)[lb]{{{${N_{-1}}$}}}}}
\put(3280,-1876){\rotatebox{90.0}{\makebox(0,0)[lb]{{{${N'_{-1}}$}}}}}
\put(2826,-1976){\rotatebox{90.0}{\makebox(0,0)[lb]{{{${N'_{-m'+1}}$}}}}}
\put(800,-2671){\makebox(0,0)[lb]{$\ul{L}$}}
\put(250,-2536){\makebox(0,0)[lb]{$M_0$}}
\put(250,-1401){\makebox(0,0)[lb]{$M_1$}}
\put(-300,-2671){\makebox(0,0)[lb]{$\ul{L}'$}}
\put(250,-1956){\makebox(0,0)[lb]{$L_{01}$}}
\put(1846,-2131){\makebox(0,0)[lb]{$=$}}
\end{picture}%
\caption{The Lagrangian correspondence functor $\Phi(L_{01})$ on morphisms}
\label{morphism}
\end{figure}

\begin{remark} \label{composeresult} {\rm (Functors for geometric compositions)} 
The composition result on quilted Floer homology generalizes to the
categorical setting as follows \cite{ww:isom}: Suppose that $L_{01}$
and $L_{12}$ are Lagrangian correspondences as above such that $L_{01}
\circ L_{12}$ is smooth and embedded into $M_0^- \times M_2$, and
simply-connected with spin structure induced from that of
$L_{01},L_{12}$ and $M_1$.  Then $\Phi(L_{12}) \circ \Phi(L_{01})$ is
homotopic to $\Phi(L_{01} \circ L_{12})$.  Note that the convention
for composition of functors (first functor on the right) is opposite
to the convention for composition of correspondences (first
correspondence on the left).
\end{remark} 

\begin{definition} \label{ainftycat} {\rm (Category of \ainfty
    categories)} For a fixed positive even integer $N$ let
  $A_\infty(N)$ be the category whose objects are $\Z_N$-graded
  \ainfty categories $\cC$ and whose morphisms are homotopy classes of
  \ainfty functors $\Phi: \cC_0 \to \cC_1$.  Composition is
  composition of functors.  Because our theory is only relatively
  graded, we also require that the categories are equipped with {\em
    shift functors}
\[ S: \cC \to \cC, \quad \Hom(S(L_0), L_1)[1] = \Hom(L_0,L_1) =
\Hom(L_0, S(L_1))[-1]\] 
in the sense of Seidel \cite[Section 3d, Section 12h]{se:bo}, where
$[k]$ denotes degree shift by $k$; and the functors
$\Phi: \cC_0 \to \cC_1$ between categories are required to commute
with the shift functors: $\Phi \circ S_0 = S_1 \circ \Phi$.
\end{definition}

\begin{remark} \label{fshift}
\begin{enumerate} 
\item Any Fukaya category $\Fuk(M)$ has such a shift functor obtained
  on objects by shifting the grading on the Lagrangians
  $L \mapsto L[1]$ by Seidel \cite[Section 12h]{se:bo} in the exact
  setting; the same construction holds for the extended Fukaya
  categories for monotone symplectic manifolds by taking
  $\ul{L}[1] = (L_0,\ldots, L_k[1])$ for
  $\ul{L} = (L_0,\ldots, L_k)$.
\item The functors associated to graded Lagrangian correspondences
  commute with the shift functors under the given assumption that the
  Maslov cover on any product $X_0^- \times X_1$ obtained from the
  Maslov covers on the factors as in Wehrheim-Woodward \cite[Section
  3]{quiltfloer} by
  $\Sp^N(2n_0 + 2n_1) \times_{\Sp^N(2n_0) \times \Sp^N(2n_1)}
  \Lag^N(X_0) \times \Lag^N(X_1)$,
  where $\Sp^N$ is the group of $N$-graded symplectomorphisms, with
  notation that of Seidel \cite{se:gr}.
\end{enumerate} \end{remark}

Remarks \ref{composeresult} and \ref{fshift} imply the following:

\begin{theorem}  \label{categorify}  
For any $\tau > 0 $ the maps 
$$ M \mapsto \Fuk^{\sharp}(M) , \quad 
[\ul{L}_{01}] \mapsto [\Phi(\ul{L}_{01})] $$ 
define a {\em categorification functor} 
$\Fuk^{\sharp}:
\Symp^{\sharp}_{\tau,N} \to A_\infty(N)$. 
\end{theorem}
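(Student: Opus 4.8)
The plan is to verify the three requirements for $\Fuk^{\sharp}$ to be a functor out of $\Symp^{\sharp}_\tau$: well-definedness on objects, well-definedness on morphisms, and compatibility with composition and identities. For objects there is nothing new, since by the construction recalled above a $\tau$-admissible symplectic manifold $M$ (Definition~\ref{admiss}) yields a genuine $A_\infty$ category $\Fuk^{\sharp}(M)$, an object of the target. For an admissible generalized Lagrangian correspondence $\ul{L}=(L_1,\dots,L_m)$ from $M_-$ to $M_+$ I would take $\Phi(\ul{L}):=\Phi(L_m)\circ\dots\circ\Phi(L_1)$ (the iterated composite of the single-correspondence functors of \cite{Ainfty}, which agrees with the direct quilted definition of $\Phi(\ul{L})$ given there); on objects this is just concatenation of the defining sequence of a generalized Lagrangian submanifold with $(L_1,\dots,L_m)$, and each $\Phi(L_i)$ makes sense because admissibility of $\ul{L}$ forces every $L_i$ to be compact, oriented, simply-connected and spin. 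As composition of homotopy classes of $A_\infty$ functors is well defined, $[\Phi(\ul{L})]$ is a well-defined morphism $\Fuk^{\sharp}(M_-)\to\Fuk^{\sharp}(M_+)$ for each representative $\ul{L}$.

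The heart of the matter is showing that $[\Phi(\ul{L})]$ depends only on the composition-equivalence class of $\ul{L}$. Since that equivalence relation on admissible sequences is generated by the elementary moves $(\dots,L_j,L_{j+1},\dots)\sim(\dots,L_j\circ L_{j+1},\dots)$ — taken over those sequences for which the geometric composition is transverse and embedded — together with $\Delta_M\sim\emptyset$, and since homotopy of $A_\infty$ functors is symmetric, it suffices to check that each generator is respected. For the first move, the composition $L_j\circ L_{j+1}$, being required to sit inside an admissible sequence, is compact, oriented, simply-connected and spin, with spin structure the one induced from $L_j$, $L_{j+1}$ and the intermediate manifold (itself monotone with constant $\tau$ and of even minimal Chern number, since $M$ is $\tau$-admissible); these are exactly the hypotheses of Remark~\ref{composeresult}, which gives $\Phi(L_{j+1})\circ\Phi(L_j)\simeq\Phi(L_j\circ L_{j+1})$. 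Substituting this homotopy inside the composite $\Phi(L_m)\circ\dots\circ\Phi(L_1)$ shows the two representatives give homotopic functors. For $\Delta_M\sim\emptyset$ I would invoke the fact, from \cite{Ainfty}, that the functor $\Phi(\Delta_M)$ associated to the diagonal is homotopic to the identity functor of $\Fuk^{\sharp}(M)$, which is $\Phi(\emptyset)$ by construction. Hence $\Phi$ descends to $\Hom_{\Symp^{\sharp}_\tau}(M_-,M_+)$.

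For composition and identities: given admissible $\ul{L}$ from $M$ to $M'$ and $\ul{L}'$ from $M'$ to $M''$, the definition of $\Phi$ as an iterated composite gives $\Phi(\ul{L}\concat\ul{L}')=\Phi(\ul{L}')\circ\Phi(\ul{L})$ by construction; together with the convention recorded in Remark~\ref{composeresult} — the order of composition of correspondences (first on the left) being opposite to that of functors (first on the right) — this is precisely the assertion that $\Fuk^{\sharp}$ sends $[\ul{L}]\circ[\ul{L}']=[\ul{L}\concat\ul{L}']$ to the composite of the images in $A_\infty$, associativity and well-definedness on homotopy classes already being part of $A_\infty$ being a category. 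The identity $1_M=[\emptyset]=[\Delta_M]$ goes to the identity functor of $\Fuk^{\sharp}(M)$ by construction, consistently with the $\Delta_M\sim\emptyset$ step. The only substantive ingredient is Remark~\ref{composeresult} (the strip-shrinking isomorphism of \cite{ww:isom}); I expect the main point requiring care to be the bookkeeping of orientations and spin structures — making sure the induced spin structure on $L_j\circ L_{j+1}$ used in the embedded-composition relation of $\Symp^{\sharp}_\tau$ is the very one demanded in Remark~\ref{composeresult}.
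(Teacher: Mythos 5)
Your proposal is correct and follows essentially the same route as the paper, whose proof simply cites \cite{Ainfty} for the construction of the functors $\Phi(\ul{L}_{01})$ and for the fact that composition-equivalent generalized correspondences yield homotopic functors, concluding that $[\Phi(\ul{L}_{01})]$ is well defined on equivalence classes. Your write-up merely expands that citation into its constituent steps (reduction to the generators of the equivalence relation, Remark~\ref{composeresult} for the geometric-composition move, $\Phi(\Delta_M)\simeq\on{Id}$ for the diagonal move, and concatenation for functoriality), all of which match the intended argument.
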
 

\begin{proof} In \cite{Ainfty} we constructed the functors
  $\Phi(\ul{L}_{01})$ and proved that if $\ul{L}_{01}, \ul{L}'_{01}$
  are related by a composition equivalence \eqref{compequiv} then the
  functors $\Phi(\ul{L}_{01}),\Phi(\ul{L}'_{01})$ are homotopic.  It
  follows that the homotopy class $[\Phi(\ul{L}_{01})]$ is independent
  of the choice of representative $\ul{L}_{01}$.
\end{proof}

\begin{definition} Two morphisms $[\Phi],[\Phi']: \cC_0 \to \cC_1$ in
  $A_\infty(N)$ are {\em shift equivalent} if after some grade shift
  the functors become homotopy equivalent, that is,
  $\Phi S_0^k \simeq \Phi'$ for some integer $k$.  Denote by
  $A_\infty(N)/S$ the category whose objects are those of
  $A_\infty(N)$ and whose morphisms spaces are shift equivalence
  classes of morphisms in $A_\infty(N)$.  Similarly, let
  $\Symp^{\sharp}_{\tau,N}/S$ be the category whose objects are those
  of $\Symp^{\sharp}_{\tau,N}$ and whose morphisms are equivalences
  classes of generalized Lagrangian branes, where two branes are
  equivalent if they are related by composition equivalences and
  grading shifts $\ul{L} \to \ul{L}[k]$.
\end{definition} 

\begin{remark}\label{rgrade} 
{\rm (Relative gradings)}
Given such an equivalence class of functors
$[\Phi] \in \Hom_{A_\infty(N)/S}(\cC_0,\cC_1)$, and objects
$L_0 \in \Obj(\cC_0), L_1 \in \Obj(\cC_1)$, the morphism space
$\Hom( \Phi(L_0),L_1)$ is independent of all choices as a {\em
  relatively} $\Z_N$-graded group.  That is, $\Hom( \Phi(L_0),L_1)$
admits a canonical decomposition into summands
$\Hom^k(\Phi(L_0), L_1), k \in \Z_N$ up to a shift in grading.  The
categorification functor
$\Fuk^{\sharp}: \Symp^{\sharp}_{\tau,N} \to A_\infty(N)$ of Theorem
\ref{categorify} induces a relatively graded version
  \[ \Fuk^{\sharp}/S:\ \  \Symp^{\sharp}_{\tau,N}/S \to A_\infty(N)/S .\]
  In particular, given a morphism
  $[L] \in \Hom_{\Symp^{\sharp}_{\tau,N}/S}(M_0,M_1)$, and objects
  $L_0 \in \Obj( \GFuk(M_0)), L_0 \in \Obj( \GFuk(M_1))$, the morphism
  space $\Hom(\Phi(\ul{L})(L_0), L_1)$ is a $\Z_N$-relatively-graded
  group.
\end{remark}

\subsection{Floer field theory}

We can now prove the main result of the paper:

\begin{theorem} \label{fft}
{\rm (Floer field theory)} For any coprime positive integers $r,d$,
the maps for surfaces $X$ given by 
$$X \mapsto \Phi(X):= \Fuk^{\sharp}(M(X))$$ 
and for elementary morphisms $[Y]$ given by 
$$[Y] \mapsto \Phi([Y]) := [\Phi(L(Y))] $$ 
extend to a connected field theory 
$$\Phi: \Cob^0_{2+1} \to A_\infty(4)/S$$
with values in the category \ainfty of (small $\Z_4$-graded \ainfty
categories with shift, homotopy classes of \ainfty functors up to
grading shifts).
\end{theorem}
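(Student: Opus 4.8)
The plan is to assemble Theorem~\ref{fft} from the three functorial pieces already constructed, by composing the symplectic-valued field theory of Theorem~\ref{sympval} with the categorification functor of Theorem~\ref{categorify}, and then invoking Theorem~\ref{cerfthm} to conclude that the resulting assignment on elementary bordisms extends uniquely to all of $\Cob^0_{2+1}$. First I would observe that Theorem~\ref{sympval} gives a functor $\Phi_{\mathrm{symp}}: \Cob_{2+1}^0 \to \Symp^{\sharp}_{1/2r}$ sending a surface $X$ to $M(X)$ and an elementary bordism $[Y]$ to $[L(Y)]$, and that Theorem~\ref{categorify} gives a functor $\Fuk^{\sharp}: \Symp^{\sharp}_{1/2r} \to A_\infty$ (applying it with $\tau = 1/2r$, which is legitimate since the moduli spaces $M(X)$ are $\tau$-admissible by Theorem~\ref{MP}, having even Chern numbers and monotonicity constant $1/2r$, and the correspondences $L(Y)$ are simply-connected and spin by Theorem~\ref{thm lagembed}). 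The composite $\Fuk^{\sharp} \circ \Phi_{\mathrm{symp}}$ is then a functor $\Cob_{2+1}^0 \to A_\infty$ that sends $X$ to $\Fuk^{\sharp}(M(X))$ and $[Y]$ to $[\Phi(L(Y))]$ for elementary $[Y]$, which is exactly the assignment in the statement; composition of functors is automatically preserved.

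The second step is to verify the hypotheses of Theorem~\ref{cerfthm} with $\cC = A_\infty$ and $n = 2$, so that uniqueness of the extension is guaranteed and the construction does not depend on choices. One must check that the partial functor is well-defined on objects (an object of $A_\infty$ is assigned to each surface via a choice of $U(r)$-bundle $P \to X$ of degree $d$, and by Lemma~\ref{Pdontmatter} different choices give canonically symplectomorphic moduli spaces, hence — via Remark~\ref{graphs} and the functoriality of $\Fuk^{\sharp}$ on $\Symp^{\sharp}$ — canonically equivalent $A_\infty$ categories), that the trivial bordism $[0,1]\times X$ goes to the identity functor (the Lemma at the end of Section~\ref{sec:cobord} identifies $L([0,1]\times P)$ with the diagonal $\Delta_{M(P)}$, whose image under $\Fuk^{\sharp}$ is the identity since $[\Delta_M] = 1_M$ in $\Symp^{\sharp}$), and that the three Cerf relations hold. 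The Cerf relations for cylinder cancellation and critical point switch are precisely the content of the Lemma immediately preceding this theorem in the paper, which checks that the hypotheses of Proposition~\ref{fiber} hold for each Cerf move; Proposition~\ref{fiber} then shows the relevant geometric compositions $L(Y_i)\circ L(Y_{i+1})$ are transverse and embedded, so the composition-equivalence relation in $\Symp^{\sharp}_{1/2r}$ identifies $[(L(Y_i),L(Y_{i+1}))]$ with $[L(Y_i\cup Y_{i+1})]$, and applying the functor $\Fuk^{\sharp}$ (which respects $\sim$ by Theorem~\ref{categorify}) yields the required identity of homotopy classes of $A_\infty$ functors, with the order reversed as noted in Remark~\ref{composeresult}.

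The step I expect to be the main obstacle — although in this paper it has been deliberately arranged to be a routine invocation — is reconciling the two opposite composition conventions: correspondences in $\Symp^{\sharp}$ compose first-on-the-left while $A_\infty$ functors compose first-on-the-right, as flagged in Remark~\ref{composeresult}. One must be careful that the categorification functor $\Fuk^{\sharp}: \Symp^{\sharp}_{1/2r}\to A_\infty$ is genuinely a functor in the stated sense, i.e.\ that $\Phi(\ul{L}\concat\ul{L}') \simeq \Phi(\ul{L}')\circ\Phi(\ul{L})$, so that the two order-reversals cancel and $\Phi_{\mathrm{symp}}$ followed by $\Fuk^{\sharp}$ really is covariant on $\Cob^0_{2+1}$; this is exactly what Theorem~\ref{categorify} asserts, so the obstacle is bookkeeping rather than mathematical. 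Given all of this, Theorem~\ref{cerfthm} applies verbatim and produces the unique connected field theory $\Phi: \Cob^0_{2+1}\to A_\infty$ extending the given data, completing the proof.

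\begin{proof}[Proof of Theorem~\ref{fft}]
By Theorem~\ref{sympval} there is a connected symplectic-valued field theory $\Phi_{\mathrm{symp}}: \Cob_{2+1}^0 \to \Symp^{\sharp}_{1/2r}$ with $\Phi_{\mathrm{symp}}(X) = M(X)$ and $\Phi_{\mathrm{symp}}([Y]) = [L(Y)]$ for elementary $[Y]$. By Theorem~\ref{MP} the moduli spaces $M(X)$ are $\tfrac{1}{2r}$-admissible in the sense of Definition~\ref{admiss}: they are compact, monotone with monotonicity constant $1/2r$, and have even minimal Chern number; and by Theorem~\ref{thm lagembed} the correspondences $L(Y)$ are simply-connected, oriented and spin, hence admissible. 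Thus Theorem~\ref{categorify} applies with $\tau = 1/2r$ and gives a categorification functor $\Fuk^{\sharp}: \Symp^{\sharp}_{1/2r} \to A_\infty$. The composite
$$\Phi := \Fuk^{\sharp} \circ \Phi_{\mathrm{symp}} : \Cob_{2+1}^0 \to A_\infty$$
is a functor. On a surface $X$ it gives $\Phi(X) = \Fuk^{\sharp}(M(X))$, and on an elementary morphism $[Y]$ it gives $\Phi([Y]) = [\Phi(L(Y))]$, as required. Functoriality of the composite yields the composition law $\Phi(Y \circ Y') = \Phi(Y') \circ \Phi(Y)$ for arbitrary bordisms, the order reversal being that of Remark~\ref{composeresult}.

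It remains to observe that $\Phi$ is the unique connected field theory extending the stated assignment on surfaces and elementary bordisms; this follows from Theorem~\ref{cerfthm} applied with $\cC = A_\infty$ and $n = 2$, once we check its hypotheses. The trivial bordism is sent to the identity: by the last Lemma of Section~\ref{sec:cobord}, $L([0,1]\times P) = \Delta_{M(P)}$, and $\Fuk^{\sharp}$ sends $[\Delta_{M}] = 1_M$ to the identity functor. The assignment is independent of the auxiliary choice of $U(r)$-bundle by Lemma~\ref{Pdontmatter} together with functoriality of $\Fuk^{\sharp}$ on symplectomorphisms (Remark~\ref{graphs}). Finally, the Cerf relations of Theorem~\ref{cerfthm} hold: by the Lemma preceding this theorem the conditions of Proposition~\ref{fiber} are satisfied for every Cerf move, so in $\Symp^{\sharp}_{1/2r}$ the relevant compositions of correspondences are transverse and embedded and hence $[(L(Y_i),L(Y_{i+1}))] = [L(Y_i \cup Y_{i+1})]$; applying $\Fuk^{\sharp}$, which respects the composition equivalence relation by Theorem~\ref{categorify}, yields the corresponding identities of homotopy classes of $A_\infty$ functors. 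Theorem~\ref{cerfthm} then produces the unique connected $A_\infty$-category-valued field theory $\Phi: \Cob^0_{2+1} \to A_\infty$ extending this data.
\end{proof}
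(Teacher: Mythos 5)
Your proposal is correct and follows essentially the same route as the paper, whose entire proof is to compose the symplectic-valued field theory of Theorem~\ref{sympval} with the categorification functor of Theorem~\ref{categorify} at $\tau = 1/2r$. The additional verifications you carry out (admissibility of $M(X)$ and $L(Y)$, the Cerf relations via Proposition~\ref{fiber}, the identity on trivial bordisms) are already packaged into the proofs of those two theorems, so your write-up is simply a more explicit unwinding of the same argument.
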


\begin{proof} The desired functor is obtained by composing the
  functors $\Phi$ of Theorem \ref{sympval} (assigning a generalized
  Lagrangian correspondence to any cobordism) with the functor
  $\GFuk/S$ of Remark \ref{rgrade} with $ \tau = 1/2r$ and
  $N = 4$ (assigning a functor to any such correspondence, modulo
  grading shifts).
\end{proof} 

\begin{remark} 
\begin{enumerate} 
\item {\rm (Group-valued invariants for bordisms)} Let
  $L_\pm \subset M(X_\pm)$ be simply-connected compact oriented spin
  Lagrangian submanifolds.  For a bordism $Y$ from $X_-$ to $X_+$ we
  denote by $ [\ul{L}(Y)] $ the corresponding equivalence class of
  generalized Lagrangian correspondences from $M(X_-)$ to $M(X_+)$.
  The quilted Floer cohomology group
$$ HF(Y;L_-,L_+) := H(\Hom( \Phi([\ul{L}(Y)])L_-, L_+)) = HF(L_-,
\ul{L}(Y) , L_+) $$
is a {\em topological invariant} of the bordism $Y$ up to isomorphism.
These Floer cohomology groups are relatively $\Z_4$-graded, since the
minimal Chern number of the moduli space $M(X)$ of bundles is even by
Proposition \ref{admiss}.
\item {\rm (Excision property)} The functors $\Phi(Y)$ satisfy a
  gluing property for gluing: if $Y_{02} = Y_{01} \circ Y_{12}$ then
$$ \Phi(Y_{02}) = \Phi(Y_{12}) \circ \Phi(Y_{01}) .$$
This relation can be regarded as a generalization of Floer's excision
property \cite{br:fl}. If $X_1$ has genus zero then $\Phi(Y) $ is the
trivial functor, while if $X_1$ has genus one then
\begin{equation} \label{tensor} H(\Hom(\Phi(Y_{02})L_-,L_+)) \cong
H(\Hom(\Phi(Y_{01})L_-,\pt) \otimes \Hom(\Phi(Y_{12})\pt,L_+))
.\end{equation}
Equation \eqref{tensor} follows immediately from Theorem \ref{MP}
since in this case $M(X_1)$ is empty resp. a point and so the Floer
complex is the product of Floer complexes
$\Hom(\Phi(Y_{01})L_-,\pt), \Hom(\Phi(Y_{12})\pt,L_+))$ for
$(\Phi(Y_{01})L_-,\pt)$ and $(\Phi(Y_{12}),L_+)$.
\end{enumerate} 
\end{remark} 

\subsection{Invariants for three-manifolds with circle-valued Morse functions} 

In this section we define invariants for three-manifolds which admit
circle-valued Morse functions.  Let $Y$ be a compact connected
oriented three-manifold containing a non-separating embedded compact
connected oriented surface $X$.  By replacing $X$ with two copies of
itself we obtain a bordism $\ol{Y}$ from $X$ to itself.  Any adapted
Morse function on $\ol{Y}$ gives rise to a circle-valued Morse
function on $Y$.

\begin{definition} The {\em cyclic Floer homology} of $(Y,X)$ is the quilted
Floer cohomology $ HF(Y,X) := HF( \ul{L}(\ol{Y}) , \Delta_{M(X)} ) .$
\end{definition} 

\begin{remark} \label{gaykirby}
\begin{enumerate} 
\item The results of Gay-Kirby \cite[Theorem 1.2]{gk:indef} imply that
  any two decompositions of $Y$ into elementary bordisms are related
  by Cerf moves.  It follows that the equivalence class of the
  generalized cyclic correspondence $ (\Delta_{M(X)},\ul{L}(X))$
  depends only on the homotopy class $[f: Y \to S^1]$ of the
  circle-valued Morse function.  Furthermore, any homotopy class
  $[f: Y \to S^1]$ of maps to the circle inducing a surjection of
  fundamental groups has a representative $f:Y \to S^1$ that is a
  Morse function with connected fibers:
$$ \# \pi_0(f^{-1}(b)) = 1, \quad \forall b \in S^1.$$
Such a Morse function gives rise to a presentation of $Y$ as a cyclic
composition of elementary bordisms $Y_1,\ldots, Y_m$.
\item The homotopy class of the circle-valued Morse functions is the
  first cohomology class on $Y$, via the identification of $S^1$ with
  the first Eilenberg-Maclane space.  Thus the isomorphism class of
  $HF(Y,X)$ depends only on the choice of cohomology class $[X]^\dual
  \in H^1(Y,\Z)$ dual to $[X]$.
\item 
 If $Y = S^1 \times X$ is the trivial fiber bundle with fiber $X$ then
$$HF(Y,X) =  HF(\Delta_{M(X)})
 \cong QH(M(X))$$ 
is the quantum cohomology of the moduli space $M(X)$.  More generally,
for any fiber bundle with monodromy $\psi: X \to X$ we may take for
$\ul{L}(Y)$ the graph of a symplectomorphism $(\psi^{-1})^* : M(X) \to
M(X)$.  In this case 
$$HF(Y,X) = HF( \graph(\psi^{-1})^* ) = HF^{\on{inst}}(Y) $$
is equivalent to the periodic instanton Floer cohomology
$HF^{\on{inst}}(Y)$ by the proof of the Atiyah-Floer conjecture for
fibrations by Dostoglou-Salamon \cite{ds:sd}.  Partial results towards
a correspondence with instanton Floer homology in the circle-valued
Morse case are given in Duncan \cite{du:qaf} and Lipyanskiy
\cite{lip:gu}.
\end{enumerate}
\end{remark} 

\subsection{Invariants for closed $3$-manifolds}

The invariant associated to a bundle over a closed three-manifold,
considered as a bordism from the empty surface to itself is trivial.
Indeed, such a three-manifold admits a Cerf decomposition where one of
the surfaces is a sphere and the moduli space of bundles of coprime
rank and degree is empty in genus zero.  The following device
suggested by Kronheimer and Mrowka \cite{km:kh} gives non-trivial
invariants of closed three-manifolds, possibly with trivial first
Betti number.

\begin{definition} \label{torussummed}
\begin{enumerate} 
\item {\rm (Torus-summed three-manifolds)} Given a closed oriented
  three-manifold $Y$ let $\ol{Y} := Y {\sharp} ([-1,1] \times T^2)$
  denote the {\em torus-summed} bordism obtained as connected sum of
  $Y$ with a product bordism $[-1,1] \times T^2$ between tori $T^2$.
\item {\rm (Torus-summed Floer homology) }
The {\em torus-summed Lagrangian Floer homology} for rank $r$ and
coprime degree $d$ of a closed three-manifold $Y$ is
$$ \ol{HF}(Y) := H(\Hom(\Phi(\ol{Y}) \pt, \pt)) = HF (
L(Y_1),\ldots,L(Y_m)) $$
for some choice of Cerf decomposition $\ol{Y} = Y_1 \cup_{X_1} \ldots
\cup_{X_{r-1}} Y_m$.
\end{enumerate} 
\end{definition}

\begin{example} {\rm (Torus-summed Lagrangian Floer homology for connect sums of
  $S^2 \times S^1$)} Suppose that $r = 2$, and for some positive
  integer $n$ the three-manifold 
$$Y  = \underbrace{(S^2 \times S^1) \sharp \ldots \sharp (S^2 \times S^1)}_n $$ 
is the connected sum of $n$ copies of $S^2 \times S^1$.  Then
$\ol{HF}(Y) = H(S^3)^n$.  Indeed, consider a Morse function on $Y$
that splits $Y {\sharp} ([-1,1] \times T^2)$ into compression bodies
$Y_\pm$, each with $n$ critical points of index one resp. two.  Each
compression body $Y_\pm$ is a bordism between a surface $X$ of genus
$n+1$ and a surface of genus $1$.  The Lagrangians
$$L(Y_-) = L(Y_+) \cong SU(2)^n \cong (S^3)^n$$ 
are identical and given in the holonomy description by
$$ L(Y_\pm) = \{ a_1 = \ldots = a_n = 1 \} \subset \left\{
\prod_{i=1}^{n+1} [a_i,b_i] = -1 \right\} .$$
By definition $\ol{HF}(Y) = HF(L(Y_-),L(Y_+)) .$ As in Biran and
Cornea \cite[Proposition 6.1.4]{bc:ql}, if $L$ is a Lagrangian product
of spheres, then $HF(L,L)$ is either $0$ or isomorphic to the ordinary
cohomology $H(L)$; this is a consequence of the spectral sequence
computing Floer cohomology starting from the singular cohomology, as
in Oh \cite{oh:fc}, \cite{buh:mul}.  By Albers \cite[Corollary
  2.11]{alb:lag}, $HF(L,L;\Z_2)$ is non-trivial if the class
$[L]_{\Z_2} \in H(M(X);\Z_2)$ of $L$ is non-zero.  The class of
$L(Y_\pm) \cong (S^3)^n$ is non-zero in $H(M(X);\Z_2)$.  Indeed, $L$
intersects the Lagrangian $L' \cong (S^3)^n$, given by $b_1 = \ldots =
b_n = 1$ transversally in a single point, namely the unique (up to
conjugation) pair $a_{n+1},b_{n+1}$ with $[a_{n+1},b_{n+1}] = 1$.
Hence $HF(L,L;\Z_2)$ is non-zero.  This implies that $HF(L,L)$ is
non-zero by the universal coefficient theorem.  Putting everything
together implies the claim.
\end{example}  

\begin{remark}  
\begin{enumerate}
\item {\rm (Four-manifold invariants?)}  Naturally one expects these
  invariants to extend to invariants of four-manifolds, fitting into a
  bundle field theory in $2+1+1$ dimensions: a bifunctor from a
  bicategory of ($2$-manifolds, $3$-bordisms, $4$-bordisms) with
  bundles to an ``\ainfty bicategory'' of (\ainfty categories, \ainfty
  functors, \ainfty natural transformations) via rank $r$ gauge
  theory.  In particular, this theory should associate to a bundle
  bordism $R$ between bordisms $Y_\pm$ a natural transformation of
  \ainfty functors
$ \Pi(R): \Phi(Y_-) \to \Phi(Y_+) .$
\item {\rm (Surgery exact triangles)} The surgery exact triangles for
  the theory (in the rank two case) are a consequence of a fibered
  generalization of a triangle for Dehn twists by Seidel, proved in
  \cite{wo:ex}.
\end{enumerate}
\end{remark} 


\def\cprime{$'$} \def\cprime{$'$} \def\cprime{$'$} \def\cprime{$'$}
  \def\cprime{$'$} \def\cprime{$'$}
  \def\polhk#1{\setbox0=\hbox{#1}{\ooalign{\hidewidth
  \lower1.5ex\hbox{`}\hidewidth\crcr\unhbox0}}} \def\cprime{$'$}
  \def\cprime{$'$}

\end{document}